\newcommand{\ov}{\overline}
\newcommand{\id}{\op{Id}}
\newcommand{\I}{\mathds{1}}
\newcommand{\md}{\operatorname{d}\!}
\newcommand{\cst}{\ensuremath{\mathrm{C}^*}}
\newcommand{\vp}{\varphi}
\newcommand{\whG}{\widehat{\GG}}
\newcommand{\whH}{\widehat{\HH}}
\newcommand{\mc}{\mathcal}
\newcommand{\msf}{\mathsf}
\newcommand{\mscr}{\mathscr}
\newcommand{\op}[1]{\operatorname{#1}}
\newcommand{\tp}{\!\!
{\scriptstyle
\text{
\raisebox{0.8pt}{
\textcircled{\raisebox{-1.7pt}{$\,\top$}}
} % \raisebox
} % \text
} % \scriptstyle
\!\!}
\newcommand{\stp}{\!\!\!
{\scriptscriptstyle
\text{
\raisebox{0.5pt}{
\textcircled{\raisebox{-1.2pt}{$\,\top$}}
} % \raisebox
} % \text
} % \scriptstyle
\!\!\!}
\newcommand{\ismaa}[2]{\langle#1\,|\,#2\rangle}
\newcommand{\Linf}{\operatorname{L}^{\infty}(\GG)}
\newcommand{\LdG}{\operatorname{L}^{2}(\GG)}
\newcommand{\IrrG}{\Irr(\GG)}
\newcommand{\SU}{\mathrm{SU}}
\newcommand{\oon}{\operatorname}
\newcommand{\GG}{\mathbb{G}}
\newcommand{\NN}{\mathbb{N}}
\newcommand{\ZZ}{\mathbb{Z}}
\newcommand{\TT}{\mathbb{T}}
\newcommand{\CC}{\mathbb{C}}
\newcommand{\RR}{\mathbb{R}}
\newcommand{\HH}{\mathbb{H}}
\newcommand{\QQ}{\mathbb{Q}}
\newcommand{\EE}{\mathbb{E}}
\DeclareMathOperator{\lin}{span}
\DeclareMathOperator{\Irr}{Irr}
\DeclareMathOperator{\Pol}{Pol}
\DeclareMathOperator{\Tr}{Tr}
\DeclareMathOperator{\B}{B}
\DeclareMathOperator{\M}{M}
\DeclareMathOperator{\N}{N}
\DeclareMathOperator{\LL}{L}
\DeclareMathOperator{\HS}{HS}
\DeclareMathOperator{\sot}{sot}
\newtheorem{proposition}{Proposition}[section]
\newtheorem{theorem}[proposition]{Theorem}
\newtheorem{theoremlet}{Theorem}
\newtheorem{lemma}[proposition]{Lemma}
\newtheorem{corollary}[proposition]{Corollary}
\theoremstyle{definition}
\newtheorem{definition}[proposition]{Definition}
\newtheorem{remark}[proposition]{Remark}
\numberwithin{equation}{section}
\begin{document}

\author{Jacek Krajczok}
\address{Institute of Mathematics of the Polish Academy of Sciences, Warsaw, Poland}
\email{jkrajczok@impan.pl}

\author{Mateusz Wasilewski}
\address{\parbox{\linewidth}{
Department of Mathematics, KU Leuven, Belgium \\
\emph{Currently}: Institute of Mathematics of the Polish Academy of Sciences, Warsaw, Poland}}
\email{mateusz.wasilewski@kuleuven.be}

\begin{abstract}
We study analogues of the radial subalgebras in free group factors (called the algebras of class functions) in the setting of compact quantum groups. For the free orthogonal quantum groups we show that they are not MASAs, as soon as we are in a non-Kac situation. The most important notion to our present work is that of a (quasi-)split inclusion. We prove that the inclusion of the algebra of class functions is quasi-split for some unitary quantum groups -- in this case the subalgebra is non-abelian and we also obtain a result concerning its relative commutant. In the positive direction, we construct certain bicrossed products from the quantum group $\SU_q(2)$ for which the algebra of class functions is a MASA. 
\end{abstract}

% \title[short title]{}
\title{On the von Neumann algebra of class functions on a compact quantum group}

\keywords{Compact quantum group, Quasi-split inclusion, Maximal abelian subalgebra}

\subjclass[2020]{Primary 46L67, 20G42}

\maketitle

\section{Introduction}
It is a well established fact that discrete (quantum or classical) groups provide a source of interesting examples to the theory of von Neumann algebras. One of the most prominent examples of von Neumann algebras associated with groups are the free group factors. Let $F_n$ the free group with generators $g_1,\dotsc, g_n\,(n\ge 2)$ and let $\LL(F_n)=\lambda(F_n)''$ be the corresponding group von Neumann algebra. Inside $\LL(F_n)$ one finds the so called radial subalgebra $\mscr{R}$, the von Neumann algebra generated by the operator $(\lambda_{g_1}+\lambda_{g_1}^*)+\cdots +(\lambda_{g_n}+\lambda_{g_n}^*)$. Its name stems from the property that if we (informally) write $x=\sum_{w\in F_n} f(w)\lambda_w\in \LL(F_n)$ then $x\in \mscr{R}$ if and only if $f$ is a radial function on $F_n$, i.e. $f(w)$ depends only on the length $|w|$ of $w$. The radial algebra was intensively studied -- let us mention that it was proved to be maximal abelian (MASA) in $\LL(F_n)$ \cite{Pytlik} and later to be singular \cite{Radulescu} and even maximal injective \cite{inj}.\\

One may look at the element $(\lambda_{g_1}+\lambda_{g_1}^*)+\cdots +(\lambda_{g_n}+\lambda_{g_n}^*)$ from a different perspective. It is the character of the fundamental representation of the compact quantum group $\widehat{F_n}$, dual to $F_n$. Consequently, the radial algebra $\mscr{R}$ is the von Neumann algebra generated by this character.
It is therefore natural to wonder whether similar properties hold for other discrete (or, by duality, compact) quantum groups. This question was studied in particular in the case of the (Kac type) free orthogonal quantum group $O_N^+$. In \cite{RadialMasa} it was shown that $\mscr{C}_{O_N^+}$, the von Neumann algebra generated by the character of the fundamental representation, is a singular MASA in $\LL^{\infty}(O_N^+)$. Observe that now $\mscr{C}_{O_N^+}$ has also a different description -- it coincides with the von Neumann algebra generated by all characters of irreducible representations. Let us take this description as a general definition of $\mscr{C}_{\GG}$ for a compact quantum group $\GG$\footnote{Note however that for $\GG=\widehat{F_n}$ we do not have equality of $\mscr{R}$ and $\mscr{C}_{\GG}$. In fact, $\mscr{C}_{\GG}=\LL^{\infty}(\GG)$ holds for all abelian compact quantum groups (meaning that the function algebra $\LL^{\infty}(\GG)$ is cocommutative).}.

\begin{definition}[{See \cite[after Corollary 5.10]{pseudogroups}}]\label{def:classfun}
For a compact quantum group $\GG$ we define the von Neumann algebra of class functions $\mscr{C}_{\GG}:=\{\chi_{\alpha}\,|\,\alpha\in\IrrG\}'' \subseteq L^{\infty}(\GG)$ .
\end{definition}

We choose to call $\mscr{C}_{\GG}$ ``the von Neumann algebra of class functions'' because the two coincide for classical compact groups (see Lemma \ref{lem:classfunctions}).

In this article we will focus on non-Kac type compact quantum groups, thus in many cases we enter the realm of type $\operatorname{III}$ von Neumann algebras. It means that we cannot use one of the most important tools -- the conditional expectation -- unless our subalgebra is preserved by the modular group. In fact, many examples of MASAs in type III factors admit a conditional expectation, for example the Cartan subalgebras inside crossed products by non-singular actions or the von Neumann subalgebra generated by the positive part in the polar decomposition of a generalized circular element in the free Araki-Woods factors (see \cite[Theorem 4.8]{MR1444786}). However, the abelian subalgebras of free Araki-Woods factors that are counterparts of the famous generator subalgebras of the free group factors, are not preserved by the modular group and are not MASAs (this is also known in the case of $q$-Araki-Woods algebras, see \cite{Bikram}). The crucial notion in this setting is that of a (quasi-)split inclusion (see \cite{MR345546} and \cite{MR735338}), which will also be central to our present work.

In Section \ref{sec:quasi-split} we will discuss the basics of quasi-split inclusions of von Neumann algebras. We record there precise statements that will allow us to prove that the inclusion of the von Neumann algebra of class functions is quasi-split in some cases. In our first result we obtain a concrete criterion in terms of the relation between the usual and quantum dimensions of irreducible representations (this criterion is satisfied for many, but not all, non-Kac type quantum groups, see Remark \ref{Rem:UFnonexample}).
\begin{theoremlet}[{Theorem \ref{Thm:quasi-split}}]
Let $\GG$ be a compact quantum group. Suppose that $\sum_{\alpha \in \Irr(\GG)} \sqrt{\tfrac{\op{dim}(\alpha)}{\op{dim}_q(\alpha)}}$ converges. Then the inclusion of the von Neumann algebra of class functions into $\LL^{\infty}(\GG)$ is quasi-split.
\end{theoremlet}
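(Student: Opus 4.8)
The plan is to deduce the statement from an abstract ``modular nuclearity'' criterion for quasi-splitness, of the kind I expect to be among the statements recorded in Section~\ref{sec:quasi-split}. Write $h$ for the Haar state of $\GG$, $\xi_h\in\LdG$ for its GNS vector (cyclic and separating for $M:=\Linf$), $\Delta$ for the associated modular operator and $\sigma=\sigma^h$ for the modular group; put $N:=\mscr{C}_\GG$. The criterion should assert that the inclusion $N\subseteq M$ is quasi-split provided $N\xi_h\subseteq\Dom(\Delta^{1/4})$ and the resulting map $\Xi\colon N\to\LdG$, $\Xi(x)=\Delta^{1/4}x\xi_h$, is nuclear. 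Since $\xi_h$ is automatically separating for $N$ (being separating for the larger $M$), everything reduces to verifying these two hypotheses, which is a Peter--Weyl computation.

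First I would locate the relevant subspace and basis. Because $\chi_\alpha^*=\chi_{\bar\alpha}$ and products of characters are finite sums of characters (the fusion rules), the unital $*$-subalgebra of $\Pol(\GG)$ generated by $\{\chi_\alpha:\alpha\in\Irr(\GG)\}$ equals $\lin\{\chi_\alpha:\alpha\in\Irr(\GG)\}$; it is $\sigma$-weakly dense in $N$, whence $\overline{N\xi_h}=\overline{\lin\{\chi_\alpha\xi_h:\alpha\in\Irr(\GG)\}}$. The family $(\chi_\alpha\xi_h)_{\alpha\in\Irr(\GG)}$ is orthonormal, since $\langle\chi_\alpha\xi_h,\chi_\beta\xi_h\rangle=h(\chi_{\bar\alpha}\chi_\beta)=\dim\Mor(\alpha,\beta)=\delta_{\alpha,\beta}$; hence it is an orthonormal basis of $\overline{N\xi_h}$.

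The heart of the argument is a norm computation. Each $\chi_\alpha\in\Pol(\GG)$ is entire for $\sigma$, so $\chi_\alpha\xi_h\in\Dom(\Delta^{1/4})$ and $\Delta^{1/4}\chi_\alpha\xi_h=\sigma_{-i/4}(\chi_\alpha)\xi_h$. Letting $F_\alpha$ be the positive invertible intertwiner of $\alpha$ normalised by $\Tr(F_\alpha)=\Tr(F_\alpha^{-1})=\op{dim}_q(\alpha)$, Woronowicz' description of $\sigma$ on the matrix coefficients gives that $\sigma_{-i/4}(\chi_\alpha)=(\Tr\otimes\id)\bigl((F_\alpha^{1/2}\otimes 1)u^\alpha\bigr)$, a ``twisted character'' lying in the span $\mscr{C}_\alpha$ of the $u^\alpha_{ij}$. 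Substituting this into the Schur orthogonality relations yields
\[
\|\Delta^{1/4}\chi_\alpha\xi_h\|^2=\langle\Delta^{1/2}\chi_\alpha\xi_h,\chi_\alpha\xi_h\rangle=\frac{\Tr(F_\alpha F_\alpha^{-1})}{\op{dim}_q(\alpha)}=\frac{\op{dim}(\alpha)}{\op{dim}_q(\alpha)}.
\]
(Running the same computation with the exponent $1/4$ replaced by $0$, respectively $1/2$, returns $\|\chi_\alpha\xi_h\|=1$, respectively $\|\chi_\alpha^*\xi_h\|=1$, a useful consistency check.) Now $\op{dim}(\alpha)\le\op{dim}_q(\alpha)$, and the vectors $\Delta^{1/4}\chi_\alpha\xi_h\in\mscr{C}_\alpha\xi_h$ are mutually orthogonal; so for $x\in N$ and its expansion $x\xi_h=\sum_\alpha c_\alpha(x)\chi_\alpha\xi_h$ (where $c_\alpha(x)=\langle x\xi_h,\chi_\alpha\xi_h\rangle$, $|c_\alpha(x)|\le\|x\|$) one has $\sum_\alpha|c_\alpha(x)|^2\|\Delta^{1/4}\chi_\alpha\xi_h\|^2\le\|x\xi_h\|^2<\infty$. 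Hence $x\xi_h\in\Dom(\Delta^{1/4})$, $\Xi$ is well defined with $\Xi=\sum_\alpha c_\alpha(\,\cdot\,)\,\Delta^{1/4}\chi_\alpha\xi_h$, and $\Xi$ is nuclear of nuclear norm at most
\[
\sum_{\alpha\in\Irr(\GG)}\|c_\alpha\|\,\|\Delta^{1/4}\chi_\alpha\xi_h\|\le\sum_{\alpha\in\Irr(\GG)}\sqrt{\tfrac{\op{dim}(\alpha)}{\op{dim}_q(\alpha)}}<\infty
\]
by hypothesis. The criterion from Section~\ref{sec:quasi-split} then gives quasi-splitness of $N\subseteq M$.

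The step I expect to demand the most care is not the computation above but the interface with the abstract criterion: the vector $\xi_h$ is \emph{not} cyclic for $\mscr{C}_\GG$ (only separating), and $\Delta^{1/4}$ does not leave $\overline{\mscr{C}_\GG\xi_h}$ invariant, so one must make sure that the quasi-split criterion stated in Section~\ref{sec:quasi-split} is flexible enough to handle this --- e.g.\ by passing to $\mscr{C}_\GG$ acting on the reducing subspace $\overline{\mscr{C}_\GG\xi_h}$ --- and that nuclearity of $\Xi$ as a map into the whole of $\LdG$ is precisely what it requires. With that in place, the rest is the bookkeeping indicated above.
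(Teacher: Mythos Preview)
Your proposal is correct and follows essentially the same route as the paper: invoke the modular-nuclearity criterion (Proposition~\ref{Prop:nuclear}), compute $\|\nabla^{1/4}\Lambda_h(\chi_\alpha)\|^2=\dim(\alpha)/\dim_q(\alpha)$ via Schur orthogonality (the paper's Corollary~\ref{Cor:modularchar}), and exhibit $\Phi_{2|\mscr{C}_\GG}$ as a nuclear sum indexed by $\Irr(\GG)$. Your worry in the final paragraph is unnecessary---the criterion recorded in Proposition~\ref{Prop:nuclear} asks only for nuclearity of $\Phi_{2|\mathsf{N}}$ into $\LL^2(\mathsf{M})$ and imposes no cyclicity of $\Omega$ for $\mathsf{N}$; the only cosmetic difference is that the paper handles the domain issue by showing $\nabla^{1/4}$ extends to a contraction on $\overline{\Lambda_h(\mscr{C}_\GG)}$ (using $\|\Lambda_h(x^*)\|=\|\Lambda_h(x)\|$, i.e.\ traciality of $h$ on $\mscr{C}_\GG$), whereas you use orthogonality of the $\nabla^{1/4}\Lambda_h(\chi_\alpha)$ together with closedness of $\nabla^{1/4}$.
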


This result is sufficient to prove that the von Neumann algebra of class functions is not a MASA in the case that $\LL^{\infty}(\GG)$ is a type $\op{III}$ factor. One of the cases of most interest to us is the compact quantum group $O_{F}^{+}$, for which it is not known in full generality whether $\LL^{\infty}(O_{F}^{+})$ is a factor. Therefore in Section \ref{sec:3} we develop a new approach, involving the scaling group, a unique symmetry of the non-Kac type quantum groups. What we show, in broad strokes, is that if being non-Kac is witnessed by all non-trivial irreducible representations then the von Neumann algebra of class functions cannot be a MASA as soon as the inclusion is quasi-split. More precisely, we prove that if it were a MASA then $\LL^{\infty}(\GG)$ would have to be factor, and the quasi-split property would force this factor to be of type $\op{I}$. In this case a non-trivial inner scaling automorphism must exist, which we show to be excluded by our assumptions. 

\begin{theoremlet}[{Theorem \ref{Thm:nonmasa}}]
Let $\GG$ be a non-trivial compact quantum group such that the inclusion of the von Neumann algebra of class functions is quasi-split and $\uprho_{\alpha}  \neq \mathds{1}_{\alpha}$ for all non-trivial representations $\alpha \in \Irr(\GG)$. Then the von Neumann algebra of class functions is not a MASA.
\end{theoremlet}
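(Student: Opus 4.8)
The plan is to suppose that $\mscr{C}_{\GG}$ is a MASA in $M:=\LL^{\infty}(\GG)$ and derive a contradiction, in three stages: (i) show that then $M$ must be a type $\op{I}$ factor; (ii) observe that the scaling group is non-trivial and hence inner; (iii) use the hypothesis on the $\uprho_{\alpha}$'s to rule out such an inner scaling group. Stages (i) and (ii) are formal; I expect the real work to sit entirely in (iii).

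For (i), I would invoke the characterisation of quasi-split inclusions from Section~\ref{sec:quasi-split} to obtain a type $\op{I}$ factor $N$ with $\mscr{C}_{\GG}\subseteq N\subseteq M$. Relative commutants shrink as the ambient algebra grows, so $\mscr{C}_{\GG}'\cap N\subseteq\mscr{C}_{\GG}'\cap M=\mscr{C}_{\GG}$ and $\mscr{C}_{\GG}$ is already maximal abelian in $N$. Since $N$ is a type $\op{I}$ factor the inclusion splits, $M=N\mathbin{\overline{\otimes}}(N'\cap M)$, whence
\[
\mscr{C}_{\GG}=\mscr{C}_{\GG}'\cap M=(\mscr{C}_{\GG}'\cap N)\mathbin{\overline{\otimes}}(N'\cap M)=\mscr{C}_{\GG}\mathbin{\overline{\otimes}}(N'\cap M),
\]
forcing $N'\cap M=\CC$, i.e.\ $M=N$. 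So $\LL^{\infty}(\GG)$ would be a type $\op{I}$ factor; this realises simultaneously ``MASA $\Rightarrow$ factor'' and ``factor $+$ quasi-split $\Rightarrow$ type $\op{I}$''.

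For (ii): $\GG$ is non-trivial, so it admits a non-trivial irreducible $\alpha_{0}$, and by hypothesis $\uprho_{\alpha_{0}}\neq\I_{\alpha_{0}}$; hence $\GG$ is not of Kac type, so the scaling group $\tau=(\tau_{t})_{t\in\RR}$ is non-trivial (equivalently $S^{2}\neq\id$). Every $\sigma$-weakly continuous one-parameter automorphism group of a type $\op{I}$ factor $M=\B(H)$ is spatially inner with a strongly continuous unitary implementation, so $\tau_{t}=\op{Ad}(V_{t})$ for a one-parameter unitary group $(V_{t})_{t}$ in $M$. A direct computation in a basis of $H_{\alpha}$ diagonalising $\uprho_{\alpha}$ gives $\tau_{t}(\chi_{\alpha})=\Tr(\uprho_{\alpha}^{it}u^{\alpha}\uprho_{\alpha}^{-it})=\Tr(u^{\alpha})=\chi_{\alpha}$, so $\tau$ fixes every character and therefore $V_{t}\in\mscr{C}_{\GG}'\cap M=\mscr{C}_{\GG}$, using the MASA property once more. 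Thus $\tau$ would be implemented by a unitary group lying inside the \emph{abelian} algebra $\mscr{C}_{\GG}$.

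For (iii) I want to contradict the existence of a non-trivial scaling group implemented from inside $\mscr{C}_{\GG}$, and this is where the full strength of ``$\uprho_{\alpha}\neq\I_{\alpha}$ for all non-trivial $\alpha$'' enters: it guarantees that $\tau$ moves \emph{every} non-trivial irreducible block non-trivially. Indeed, if $\uprho_{\alpha}\neq\I_{\alpha}$ then $\uprho_{\alpha}$ has at least two distinct eigenvalues, and since $u^{\alpha}$ is irreducible it cannot be block-diagonal for the eigenspaces of $\uprho_{\alpha}$, so there are $j,k$ with $u^{\alpha}_{jk}\neq0$ and $\rho^{\alpha}_{j}\neq\rho^{\alpha}_{k}$; passing to an adjoint if necessary we obtain a nonzero $a_{\alpha}\in M$ with $\tau_{t}(a_{\alpha})=\mu_{\alpha}^{it}a_{\alpha}$ for some $\mu_{\alpha}>1$, so the Arveson spectrum of $\tau$ is non-trivial. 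The crux is to upgrade this to the \emph{Connes} spectrum $\Gamma(\tau)$ being non-trivial, i.e.\ to show that no nonzero projection $e$ in $M^{\tau}$ satisfies $eMe\subseteq M^{\tau}$; note that $eMe\subseteq M^{\tau}$ would mean $e(\tau_{t}(x)-x)e=0$ for all $x$, hence (testing on $x=u^{\alpha}_{jk}$) that $eu^{\alpha}_{jk}e=0$ whenever $\rho^{\alpha}_{j}\neq\rho^{\alpha}_{k}$ --- so $e$ would block-diagonalise every corepresentation against the eigenspaces of its $\uprho$, which the hypothesis should forbid for $e\neq0$ (this is the ``scaling-flow'' analogue of being non-Kac being witnessed by all non-trivial irreducibles). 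Once $\Gamma(\tau)\neq\{0\}$ is established we are done: an inner action of $\RR$ on a factor has trivial Connes spectrum, contradicting that $M$ is a factor on which $\tau$ is inner; hence $\mscr{C}_{\GG}$ is not a MASA. The main obstacle is precisely this non-vanishing of $\Gamma(\tau)$ --- blocking the collapse of the scaling spectrum under compression using $\uprho_{\alpha}\neq\I_{\alpha}$ for all non-trivial $\alpha$.
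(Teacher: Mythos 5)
Your overall skeleton (assume MASA, force $\LL^{\infty}(\GG)$ to be a type $\operatorname{I}$ factor, note the scaling automorphisms become inner and are implemented from $\mscr{C}_{\GG}$, then derive a contradiction) is the same as the paper's, but both places where the hypothesis $\uprho_{\alpha}\neq\I_{\alpha}$ actually has to do work are missing or unproven. First, stage (i): quasi-split does \emph{not} give you an intermediate type $\operatorname{I}$ factor. Quasi-split and split only coincide when both algebras are factors or one is of type $\operatorname{III}$ (Remark \ref{rem:splitfactor}); here $\mscr{C}_{\GG}$ is abelian and the type of $\LL^{\infty}(\GG)$ is exactly what is unknown. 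Worse, the existence of such an $N$ already presupposes factoriality: if $\mscr{C}_{\GG}\subseteq N\subseteq M$ with $N$ a factor and $\mscr{C}_{\GG}$ a MASA in $M$, then $\mathcal{Z}(M)\subseteq\mscr{C}_{\GG}'\cap M=\mscr{C}_{\GG}\subseteq N$ while also $\mathcal{Z}(M)\subseteq N'$, so $\mathcal{Z}(M)\subseteq\mathcal{Z}(N)=\CC\I$ --- i.e.\ your first step is circular. What quasi-split plus MASA actually yields is Lemma \ref{Lem:typeI} ($M$ is a direct sum of type $\operatorname{I}$ factors); factoriality needs a separate, quantum-group-specific argument, which in the paper is Proposition \ref{Prop:masafactor}: the MASA assumption gives $\mathcal{Z}(M)\subseteq\mscr{C}_{\GG}\cap\LL^{\infty}(\GG)^{\sigma}$, and the orthogonality relations together with $\sigma_{z}(\chi_{\alpha})=\sum_{k}\uprho_{\alpha,k}^{2iz}u^{\alpha}_{kk}$ identify this fixed-point algebra with $\overline{\op{span}}^{w^{\ast}}\{\chi_{\alpha}\,|\,\uprho_{\alpha}=\I_{\alpha}\}$, which is $\CC\I$ under the hypothesis. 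None of this appears in your proposal.

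Second, stage (iii): you acknowledge yourself that the whole argument rests on $\Gamma(\tau)\neq\{0\}$, i.e.\ on excluding nonzero $\tau$-invariant projections $e$ with $eMe\subseteq M^{\tau}$, and you only assert that the hypothesis ``should forbid'' this. That is the entire content of the step and it is not obvious: your reduction only shows $eu^{\alpha}_{jk}e=0$ for matrix entries with distinct $\uprho_{\alpha}$-eigenvalues, and nothing you say prevents a projection from annihilating all such corners. The paper avoids the Connes spectrum altogether and instead uses the MASA property a second time, through the modular group (Proposition \ref{Prop:scaling}): the implementing unitary $v$ lies in $\mscr{C}_{\GG}$; since $\tau$ and $\sigma$ commute, $\sigma_{s}(v)^{\ast}v\in\mathcal{Z}(M)\subseteq\mscr{C}_{\GG}$, hence $\sigma_{s}(v)\in\mscr{C}_{\GG}$ for all $s$, so $v\in\bigcap_{s}\sigma_{s}(\mscr{C}_{\GG})$, which by Proposition \ref{Prop:masafactor} equals $\overline{\op{span}}^{w^{\ast}}\{\chi_{\alpha}\,|\,\uprho_{\alpha}=\I_{\alpha}\}=\CC\I$; thus $\tau_{t}$ would be trivial, contradicting non-Kacness. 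If you want to salvage your route you must either prove the non-vanishing of the Connes spectrum from the hypothesis (a genuinely new argument) or replace stage (iii) by the modular-group argument above, and in either case you must supply the factoriality step independently of any split assumption.
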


Here $\uprho_{\alpha} \in \op{B}(\mathsf{H}_{\alpha})$ is the unique positive matrix intertwining the irreducible representation $\alpha$ with its double conjugate and satisfying $\op{Tr}(\uprho_{\alpha}) = \op{Tr}(\uprho_{\alpha}^{-1})$; for an arbitrary finite dimensional representation $U$ we can define $\uprho_{U}$ using the decomposition of $U$ into irreducible representations, see \cite[Section 1.4]{NeshTu}.

In Section \ref{sec:examples} we focus on studying concrete examples. More specifically, in Subsection \ref{subsec:abelian} we introduce a class of compact quantum groups for which we can verify the criterion from Section \ref{sec:quasi-split} for the quasi-split property. Combined with the results from Section \ref{sec:3} we obtain the following.
\begin{theoremlet}[{Corollary \ref{Cor:quasisplit}}]
Let $\GG = O_{F}^{+}$ -- the free orthogonal quantum group -- or $\GG = \GG_{Aut}(B,\psi)$ -- the quantum automorphism group of a finite dimensional $C^{\ast}$-algebra equipped with a $\delta$-form. If $\GG$ is not of Kac type then $\mscr{C}_{\GG}$ is not a MASA in $\LL^{\infty}(\GG)$.
\end{theoremlet}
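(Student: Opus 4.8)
The plan is to verify the hypotheses of Theorem B for the two families $O_F^+$ and $\GG_{Aut}(B,\psi)$ in the non-Kac case. Theorem B requires two things: (i) the inclusion $\mscr{C}_{\GG} \subseteq \LL^{\infty}(\GG)$ is quasi-split, and (ii) $\uprho_\alpha \neq \mathds{1}_\alpha$ for every non-trivial $\alpha \in \Irr(\GG)$. For (i) I would invoke Theorem A, so the real work is to check that $\sum_{\alpha \in \Irr(\GG)} \sqrt{\dim(\alpha)/\dim_q(\alpha)}$ converges; this should follow from the machinery developed in Subsection \ref{subsec:abelian}, where the relevant class of quantum groups (containing both families) is introduced precisely so that this criterion can be verified — presumably the fusion rules of $O_F^+$ and $\GG_{Aut}(B,\psi)$ are those of $\SU_q(2)$ (indexed by $\NN$), and one has explicit formulas $\dim(\alpha_n) \sim$ (polynomial or $[\text{geometric with ratio} <$ that of $\dim_q])$ while $\dim_q(\alpha_n) \sim \lambda^{-n}$ for the appropriate $\lambda \in (0,1)$ determined by $F$ (resp. $\psi$), so the ratio decays geometrically and the square-root series converges.

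For (ii), I would argue representation-theoretically: in both families the irreducible representations are indexed by $\NN$ with $\SU_q(2)$-type fusion rules, and the matrix $\uprho_{\alpha_n}$ for the $n$-th irreducible is determined recursively from $\uprho_{\alpha_1} = \uprho_u$, the $\uprho$-matrix of the fundamental representation $u$. Being non-Kac means exactly that $\uprho_u \neq \mathds{1}$ — equivalently $F^*F$ (resp. the form $\psi$) is not a scalar. One then needs: $\uprho_u \neq \mathds{1}$ forces $\uprho_{\alpha_n} \neq \mathds{1}_{\alpha_n}$ for all $n \geq 1$. This follows because $\dim_q(\alpha_n) = \Tr(\uprho_{\alpha_n}) > \dim(\alpha_n)$ strictly whenever $\uprho_{\alpha_n} \neq \mathds{1}$ (by strict convexity / AM–GM applied to the constraint $\Tr(\uprho) = \Tr(\uprho^{-1})$), and conversely $\dim_q(\alpha_n) > \dim(\alpha_n)$ for all $n \geq 1$ can be read off from the explicit quantum dimensions (e.g. $\dim_q(\alpha_n) = [n+1]_q$ for $O_F^+$ with a suitable $q$, which strictly exceeds $n+1$ when $q \neq 1$). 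Alternatively, one checks directly that $\uprho_{\alpha_n} = \mathds{1}$ for some $n \geq 1$ would force $\uprho_u = \mathds{1}$ by the tensor-product/subrepresentation structure ($u \otimes \bar{u} \supseteq$ trivial, and $\uprho$ is multiplicative for tensor products and compatible with subobjects).

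The main obstacle I expect is the convergence estimate in (i): one must pin down the precise asymptotics of both $\dim(\alpha_n)$ and $\dim_q(\alpha_n)$ in terms of the defining data, and in the $\GG_{Aut}(B,\psi)$ case the fundamental representation is not irreducible (it splits as trivial plus an irreducible of dimension $\dim(B) - 1$), so the recursion and the identification of the parameter $\lambda$ governing $\dim_q$ are slightly more delicate than for $O_F^+$; getting a clean geometric bound $\dim(\alpha_n)/\dim_q(\alpha_n) \leq C\mu^n$ with $\mu < 1$ uniformly is where the actual estimates live. Once that is in hand, everything else is bookkeeping: apply Theorem A to get quasi-split, note $\GG$ is non-trivial and non-Kac gives $\uprho_{\alpha} \neq \mathds{1}_\alpha$ on non-trivial irreducibles, and conclude by Theorem B that $\mscr{C}_{\GG}$ is not a MASA.
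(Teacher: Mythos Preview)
Your plan is correct and matches the paper's approach: verify the summability criterion of Theorem~A via a geometric decay estimate on $\dim(\alpha_n)/\dim_q(\alpha_n)$ coming from the fusion rules, then apply Theorem~B after checking $\uprho_{\alpha_n}\neq\mathds{1}$ for all $n\geq 1$. The paper packages both families into a single axiomatic class (Subsection~\ref{subsec:abelian}) and proves the decay $\dim(n)/\dim_q(n)\le d/c^n$ by a recursion on $A_n:=\dim_q(n)/\dim(n)$ using only the abstract fusion rules, rather than via explicit $q$-integer formulas; this also immediately gives $A_n>1$ for $n\ge 1$, hence $\uprho_{\alpha_n}\neq\mathds{1}$, so both your conditions (i) and (ii) fall out of the same inequality.
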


In the same section we study the compact quantum group $\SU_q(2)$. In this case the von Neumann algebra of class functions is not a MASA in $\LL^{\infty}(\SU_q(2))$, but it is a MASA in the fixed point subalgebra of the scaling group. From the action by scaling automorphisms (restricted to the rational numbers, treated as a discrete group) we build a new compact quantum group using the bicrossed product construction. It is a non-Kac type compact quantum group for which we can prove that the von Neumann algebra of class functions is a MASA.

\begin{theoremlet}[{Proposition \ref{prop4.2.1}}]
Let $\HH$ be the bicrossed product $\QQ\bowtie\SU_q(2)$. Then $\mscr{C}_{\HH}$ is a MASA in $\LL^{\infty}(\HH)$.
\end{theoremlet}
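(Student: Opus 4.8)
The plan is to exploit the explicit form of $\HH$ obtained above: $\LL^{\infty}(\HH)$ is the crossed product $\LL^{\infty}(\SU_q(2))\rtimes_{\tau}\QQ$, with $\QQ$ acting through the scaling automorphisms $\tau$, the comultiplication $\Delta_{\HH}$ restricting to $\Delta_{\SU_q(2)}$ on $\LL^{\infty}(\SU_q(2))$ and sending each implementing unitary $u_g$ ($g\in\QQ$) to $u_g\otimes u_g$. Two facts are the engine of the proof. First, every character $\chi_{\alpha}$, $\alpha\in\Irr(\SU_q(2))$, is scaling-invariant (conjugating $u^{\alpha}$ by $\uprho_{\alpha}^{it}$ does not alter its trace), so $\mscr{C}_{\SU_q(2)}\subseteq\LL^{\infty}(\SU_q(2))^{\tau}$, and consequently $u_g c u_g^{*}=\tau_g(c)=c$ for every $c\in\mscr{C}_{\SU_q(2)}$; thus $\mscr{C}_{\SU_q(2)}$ and $\LL(\QQ):=\{u_g : g\in\QQ\}''$ commute. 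Second --- the substantive input, already established in this section --- $\mscr{C}_{\SU_q(2)}$ is a MASA in the scaling-fixed algebra $\LL^{\infty}(\SU_q(2))^{\tau}$.

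First I pin down $\mscr{C}_{\HH}$. For $\alpha\in\Irr(\SU_q(2))$ and $g\in\QQ$, the linear span of the elements $u^{\alpha}_{ij}u_g$ is a simple subcoalgebra of $\Pol(\HH)$ (the algebraic crossed product $\Pol(\SU_q(2))\rtimes\QQ$) and is the coefficient space of the unitary representation $u^{\alpha}(1\otimes u_g)$, which is irreducible because $1\otimes u_g$ commutes with $\B(\mathsf{H}_{\alpha})\otimes 1$, so that $\Mor(u^{\alpha}(1\otimes u_g),u^{\alpha}(1\otimes u_g))=\Mor(u^{\alpha},u^{\alpha})=\CC$. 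Since $\{u^{\alpha}_{ij}u_g\}_{\alpha,i,j,g}$ is a linear basis of $\Pol(\HH)$, orthogonality of coefficient spaces shows these representations are pairwise inequivalent and exhaust $\Irr(\HH)$, and their characters are $\chi_{\alpha}u_g$. Taking $\alpha$ trivial gives $u_g\in\mscr{C}_{\HH}$ for all $g$; taking $g=0$ gives $\chi_{\alpha}\in\mscr{C}_{\HH}$ for all $\alpha$. Hence $\mscr{C}_{\HH}=\mscr{C}_{\SU_q(2)}\vee\LL(\QQ)$, which by the first fact is the von Neumann algebra generated by a commuting pair of abelian algebras, hence abelian.

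It remains to see $\mscr{C}_{\HH}$ is maximal abelian. Let $E\colon\LL^{\infty}(\HH)\to\LL^{\infty}(\SU_q(2))$ be the canonical normal faithful conditional expectation and, for $x\in\LL^{\infty}(\HH)$, write $x_g:=E(xu_g^{*})\in\LL^{\infty}(\SU_q(2))$; then $x$ is determined by the family $(x_g)_{g\in\QQ}$, and the $g$-th Fourier coefficient of $u_hxu_h^{*}$ equals $\tau_h(x_g)$. If $x$ commutes with all $u_h$, then $\tau_h(x_g)=x_g$ for all $g,h\in\QQ$, so $x_g\in\LL^{\infty}(\SU_q(2))^{\tau}$ (the fixed-point algebras of the $\QQ$- and $\RR$-actions agree since $\tau$ is $\sigma$-weakly continuous and $\QQ$ is dense in $\RR$). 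If in addition $x$ commutes with $\mscr{C}_{\SU_q(2)}$, then, as each $u_h$ already commutes with $\mscr{C}_{\SU_q(2)}$, one gets $cx_g=x_gc$ for all $c\in\mscr{C}_{\SU_q(2)}$ and all $g$, i.e. $x_g$ lies in the relative commutant of $\mscr{C}_{\SU_q(2)}$ inside $\LL^{\infty}(\SU_q(2))^{\tau}$, which is $\mscr{C}_{\SU_q(2)}$ by the MASA property. Thus every Fourier coefficient of $x$ lies in $\mscr{C}_{\SU_q(2)}$, so $x$ belongs to the von Neumann algebra generated by $\mscr{C}_{\SU_q(2)}$ and $\{u_g\}$, namely $\mscr{C}_{\HH}$. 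Hence $\mscr{C}_{\HH}'\cap\LL^{\infty}(\HH)\subseteq\mscr{C}_{\HH}$, and with the previous paragraph this shows $\mscr{C}_{\HH}$ is a MASA. The real weight of the argument rests on the earlier theorem that $\mscr{C}_{\SU_q(2)}$ is maximal abelian inside $\LL^{\infty}(\SU_q(2))^{\tau}$; within this proof the only step demanding care is the exact identification $\mscr{C}_{\HH}=\mscr{C}_{\SU_q(2)}\vee\LL(\QQ)$ (equivalently, the classification of $\Irr(\HH)$), after which the relative-commutant computation is purely formal, modulo the standard bookkeeping with Fourier coefficients in a crossed product by a discrete group.
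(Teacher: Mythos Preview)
Your argument follows the same route as the paper's: identify $\mscr{C}_{\HH}$ as $\mscr{C}_{\SU_q(2)}\vee\LL(\QQ)$, take an element of the relative commutant, show each Fourier coefficient is scaling-invariant (from commutation with the $u_g$'s) and commutes with $\mscr{C}_{\SU_q(2)}$, and then invoke Proposition~\ref{prop4.1.2} to conclude each coefficient lies in $\mscr{C}_{\SU_q(2)}$.

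The only place where you are more cavalier than the paper is the very last step. You write: ``every Fourier coefficient of $x$ lies in $\mscr{C}_{\SU_q(2)}$, so $x$ belongs to the von Neumann algebra generated by $\mscr{C}_{\SU_q(2)}$ and $\{u_g\}$'', and you label this ``standard bookkeeping''. The paper explicitly flags this as the delicate point: the formal Fourier series $\sum_\gamma u_\gamma\alpha(T_\gamma)$ need not converge in the $w^*$-topology (it cites Mercer for this phenomenon), so knowing the coefficients lie in a subalgebra does not by itself place $x$ there. The paper resolves this by passing to $\LL^2(\HH)$, showing $\Lambda_{h_{\HH}}(x)\in\overline{\Lambda_{h_{\HH}}(\mscr{C}_{\HH})}$, and then invoking Lemma~\ref{lemma4.2.2}: since $h_{\HH}$ restricted to $\mscr{C}_{\HH}$ is \emph{tracial}, the map $J\nabla^{1/2}$ is isometric on $\Lambda(\mscr{C}_{\HH})$, and this lets one conclude $x\in\mscr{C}_{\HH}$. (Alternatively, as noted after the proof, a Fej\'er-type summation theorem for crossed products would also close the gap.) Your sketch is correct in spirit, but you should either supply this $\LL^2$/traciality argument or cite a Fej\'er theorem rather than treat the passage from coefficients to membership as automatic.
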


The bicrossed product construction above actually depends on the choice of a non-zero real number and for most choices $\LL^{\infty}(\HH)$ is the unique injective factor of type $\op{II}_{\infty}$.

In the last section of the paper we investigate the properties of the free unitary groups $U_{F}^{+}$. In this case the von Neumann algebra of class functions is non-commutative, but it remains tracial (see \cite[Equation (5.33)]{pseudogroups}). We show that the inclusion is quasi-split as long as the quantum group is ``sufficiently non-Kac''.

\begin{theoremlet}[{See Corollary \ref{Cor:UF} for a precise statement}]
Let $\GG= U_{F}^{+}$. If the ratio $\frac{\dim(\alpha)}{\dim_q(\alpha)}$, where $\alpha$ is the fundamental representation, is sufficiently small then the inclusion $\mscr{C}_{U_F^+} \subseteq \LL^{\infty}(U_F^+)$ is quasi-split. As a consequence, the relative commutant $\mscr{C}_{U_F^+}' \cap \LL^{\infty}(U_F^+)$ is not contained in $\mscr{C}_{U_F^+}$.
\end{theoremlet}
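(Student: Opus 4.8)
The plan is to deduce quasi-splitness from the criterion of Theorem A (Theorem~\ref{Thm:quasi-split}), and then to extract the statement about the relative commutant as a general consequence of quasi-splitness together with the non-Kac structure of $U_F^+$. First I would recall the representation theory of $U_F^+$: the irreducible representations are indexed by the free monoid on two generators (words in $u$ and $\bar u$), the fusion rules are those of the free monoid with cancellation, and both $\dim$ and $\dim_q$ are multiplicative along these words up to the contributions coming from the cancellations in the fusion rules. Concretely, if $t = \dim_q(\alpha)$ and $s = \dim(\alpha)$ for the fundamental representation $\alpha$, then $\dim_q$ of the representation labelled by a word of length $n$ grows like a constant times $\mu^n$ where $\mu$ is the larger root of $X^2 - tX + 1$, and similarly $\dim$ grows like a constant times $\nu^n$ with $\nu$ the larger root of $X^2 - sX + 1$ (here one uses $s \ge 2$, and $s < t$ in the non-Kac case). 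The number of words of length $n$ is $2^n$. Hence the series $\sum_{\alpha} \sqrt{\dim(\alpha)/\dim_q(\alpha)}$ is, up to constants, comparable to $\sum_n 2^n (\nu/\mu)^{n/2}$, which converges precisely when $4\nu < \mu$, i.e. when $\nu/\mu < 1/4$. Since $\nu \to 1$ as $s \to 2$ while $\mu \to \infty$ as $t \to \infty$, the ratio $\nu/\mu$ can be made arbitrarily small by taking $\dim(\alpha)/\dim_q(\alpha) = s/t$ small enough; this gives the quantitative hypothesis and, via Theorem~\ref{Thm:quasi-split}, the quasi-split conclusion. I would package the precise threshold on $s/t$ into the statement of Corollary~\ref{Cor:UF}.

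Next, for the relative commutant assertion, I would argue by contradiction: suppose $\mscr{C}_{U_F^+}' \cap \LL^\infty(U_F^+) \subseteq \mscr{C}_{U_F^+}$, which (since the reverse inclusion always holds for an abelian-inside or at least self-commuting situation — here $\mscr{C}_{U_F^+}$ is not abelian, so I should be careful and instead phrase it as: $\mscr{C}_{U_F^+}$ contains its own relative commutant, i.e. $\mscr{C}_{U_F^+}' \cap \LL^\infty(U_F^+) \subseteq Z(\mscr{C}_{U_F^+})$ if one wants a clean statement, but the paper's statement is the weaker $\subseteq \mscr{C}_{U_F^+}$) would make $\mscr{C}_{U_F^+}$ an "irreducible-like" subfactor in a strong sense. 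The key point is that a quasi-split inclusion $N \subseteq M$ sits inside an intermediate tensor-product situation $N \subseteq N \mathbin{\bar\otimes} B(\ell^2) \subseteq M$ (after amplification, or more precisely there is a normal embedding of $M$ into $N \mathbin{\bar\otimes} B(H)$ compatible with the inclusion). From this one reads off that the relative commutant $N' \cap M$ maps onto something containing $1 \mathbin{\bar\otimes} B(H) \cap (\text{image of } M)$, which is large — in particular it cannot be contained in $N$ when $N$ is "small" relative to $M$. The cleanest route: if $\mscr{C}_{U_F^+}' \cap \LL^\infty(U_F^+) \subseteq \mscr{C}_{U_F^+}$ then this relative commutant equals $Z(\mscr{C}_{U_F^+})$, hence is tracial and "thin", whereas quasi-splitness forces the relative commutant to contain a copy of a type $\mathrm{I}_\infty$ factor (coming from the $B(H)$ factor in the quasi-split picture), giving a contradiction since $\mscr{C}_{U_F^+}$ is a finite von Neumann algebra (it is tracial by \cite[Equation (5.33)]{pseudogroups}) and so are its corners.

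A slightly different and perhaps more robust way to organize the second half, which I would actually prefer to write: combine quasi-splitness with Theorem B (Theorem~\ref{Thm:nonmasa})-style reasoning. Since $U_F^+$ is non-Kac and, in the relevant non-Kac regime, $\uprho_\alpha \ne \mathds{1}_\alpha$ for all non-trivial $\alpha$ (this follows from the explicit formula for $\uprho$ on the fundamental representation being a non-scalar positive matrix whenever $F$ is not a scalar multiple of a unitary, and multiplicativity along words propagates non-triviality), Theorem B already tells us $\mscr{C}_{U_F^+}$ is not a MASA. But here $\mscr{C}_{U_F^+}$ is not abelian, so "not a MASA" is automatic and uninformative; the content we want is the sharper statement about the relative commutant not being \emph{absorbed} by $\mscr{C}_{U_F^+}$. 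For that I would isolate the following lemma-type fact from Section~\ref{sec:quasi-split} (or prove it there): if $N \subseteq M$ is quasi-split and $N$ is a finite von Neumann algebra while $M$ is properly infinite (or, more weakly, $M$ has no finite direct summand containing the image of $N$), then $N' \cap M \not\subseteq N$. I would then verify the hypotheses for $N = \mscr{C}_{U_F^+}$, $M = \LL^\infty(U_F^+)$: $N$ is finite since it carries the Haar trace's restriction which is a trace on it, while $M = \LL^\infty(U_F^+)$ is known to be a type $\mathrm{III}$ factor (for $F$ not of Kac type, by results of De Commer–Freslon–Yamashita / Vaes et al.) — or, to avoid invoking factoriality of $\LL^\infty(U_F^+)$, I would run the scaling-group argument of Section~\ref{sec:3} to rule out $M$ being finite.

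The main obstacle I anticipate is the second half: turning quasi-splitness into the concrete statement "$N' \cap M \not\subseteq N$" cleanly. Quasi-splitness gives an intermediate type $\mathrm{I}$ factor $N \subseteq N \mathbin{\bar\otimes} B(H) \subseteq M$ \emph{after some spatial identification}, and one must be careful that the relative commutant $N' \cap M$ computed inside $M$ genuinely contains the "new" part $1 \mathbin{\bar\otimes} B(H)$ and is not swallowed — this requires knowing that $N$ is not all of $N \mathbin{\bar\otimes} B(H)$, equivalently that $H$ is infinite-dimensional, equivalently that the inclusion $N \subseteq M$ is not already a (amplified) finite-index-type inclusion; in our setting this is guaranteed because $M$ is infinite (type $\mathrm{III}$ or type $\mathrm{II}_\infty$) while $N$ is finite, so the amplification $B(H)$ must be infinite-dimensional. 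Once that is pinned down, the rest is bookkeeping. The first half (convergence of the dimension series) is essentially a computation with the known fusion rules and dimension formulas for $U_F^+$, so I expect it to be routine modulo carefully tracking the exponential growth rates $\mu$ and $\nu$ and the combinatorial factor $2^n$.
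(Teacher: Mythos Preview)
Your approach to the relative-commutant half is essentially the paper's: once quasi-splitness is established, one invokes that $\LL^\infty(U_F^+)$ is a type $\mathrm{III}$ factor (by \cite{CCAP}) and applies Proposition~\ref{Prop:notmasa}, so that $\mscr{C}_{U_F^+}' \cap \LL^\infty(U_F^+)$ is itself type $\mathrm{III}$ and cannot sit inside the finite algebra $\mscr{C}_{U_F^+}$. You reach this after some detours, but the endpoint is correct. (The paper also records a second argument: $\mscr{C}_{U_F^+} \simeq \LL(F_2)$ is a factor, so containment of the relative commutant would make the inclusion irreducible; but a quasi-split inclusion of factors is split, and a proper split inclusion is never irreducible.)

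The first half has a genuine gap. Your claim that for a word $\gamma$ of length $n$ one has $\dim(\gamma) \sim \nu^n$ and $\dim_q(\gamma) \sim \mu^n$ is false: the dimension depends on the \emph{block structure} of the word, not just its length. For instance $\alpha \otimes \alpha^{n-1} = \alpha^n$ with no extra summand, so $\dim(\alpha^n) = s^n$, whereas for the alternating word $w_n = \alpha\beta\alpha\cdots$ one has $\dim(w_n) = [n+1]_{q_c} \sim \nu^n$; since $s = \nu + \nu^{-1} > \nu$ these differ exponentially. Worse, a short computation gives $s/t > \nu/\mu$, so $(\dim/\dim_q)(\alpha^n) = (s/t)^n$ strictly exceeds your claimed uniform bound $(\nu/\mu)^n$, and your comparison with $\sum_n 2^n (\nu/\mu)^{n/2}$ is not justified. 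The paper repairs this by decomposing each word uniquely as a concatenation $x_1\cdots x_p$ of maximal alternating pieces (Lemma~\ref{lemma4.4.1}); on this decomposition both $\dim$ and $\dim_q$ are \emph{exactly} multiplicative (the fusion rule \eqref{eq4.4.2} gives $x_k \tp x_{k+1} = x_k x_{k+1}$ with no further summands when the last letter of $x_k$ equals the first of $x_{k+1}$), so the full series collapses to $1 + 2\sum_{p\ge 1} S^p$ with $S = \sum_{n\ge 1} \sqrt{[n+1]_{q_c}/[n+1]_{q_q}}$, and one estimates $S<1$ directly (Lemma~\ref{lemma4.4.2} and Proposition~\ref{prop4.4.1}). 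Your threshold $\nu/\mu < 1/4$ happens to be numerically close to the paper's, but it is reached via an invalid uniform estimate rather than via this multiplicative decomposition.
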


\subsection{Preliminaries}
We would like to start by motivating our terminology from Definition \ref{def:classfun}.

\begin{lemma}\label{lem:classfunctions}
Let $G$ be a compact group. Then $\{\chi_\alpha\,|\, \alpha\in \Irr(G)\}''$ equals the von Neumann algebra of essentially bounded measurable class functions, i.e. the set of ~$f\in\LL^{\infty}(G)$ satisfying $f(hgh^{-1})=f(g)\,(g,h\in G)$.
\end{lemma}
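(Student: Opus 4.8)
The plan is to establish the inclusion ``$\subseteq$'' directly and to reduce the reverse inclusion to a comparison of $\LL^{2}$-closures, exploiting that the Haar state $h$ is a faithful normal trace on $\LL^{\infty}(G)$ --- which here is immediate, since $\LL^{\infty}(G)$ is commutative. Throughout write $N:=\{\chi_{\alpha}\,|\,\alpha\in\Irr(G)\}''$ and let $M$ denote the candidate algebra of essentially bounded measurable class functions.

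First I would check that $M$ is a von Neumann subalgebra of $\LL^{\infty}(G)$ containing $N$. Conjugation defines a weak-$*$ continuous action $\beta$ of $G$ on $\LL^{\infty}(G)$ by normal $*$-automorphisms, $\beta_{h}(f)(g)=f(h^{-1}gh)$; reading the condition $f(hgh^{-1})=f(g)$ as ``$\beta_{h}(f)=f$ in $\LL^{\infty}(G)$ for every $h\in G$'' (the sensible interpretation for an $\LL^{\infty}$-function), we get $M=\bigcap_{h\in G}\ker(\beta_{h}-\id)$, an intersection of weak-$*$ closed unital $*$-subalgebras, hence itself a von Neumann subalgebra. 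Each character $\chi_{\alpha}=\Tr(\alpha(\cdot))$ is a continuous --- in particular bounded --- class function, so $\chi_{\alpha}\in M$ and consequently $N\subseteq M$.

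For the reverse inclusion I would invoke the standard rigidity principle: if $B_{1}\subseteq B_{2}$ are von Neumann subalgebras of a von Neumann algebra carrying a faithful normal trace $\tau$ and their $\|\cdot\|_{2}$-closures in the GNS space coincide, then $B_{1}=B_{2}$; indeed the orthogonal projections onto these closures agree, whence so do the $\tau$-preserving conditional expectations $E_{B_{1}}=E_{B_{2}}$, and then any $b\in B_{2}$ satisfies $b=E_{B_{2}}(b)=E_{B_{1}}(b)\in B_{1}$. Applied to $N\subseteq M$ inside $(\LL^{\infty}(G),h)$, it remains only to show that $N$ and $M$ have the same closure in $\LL^{2}(G)$, and this is where Peter--Weyl enters. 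Decomposing $\LL^{2}(G)$ into the (pairwise orthogonal, $\beta$-invariant) blocks of matrix coefficients and identifying the $\alpha$-block with $\op{B}(\mathsf{H}_{\alpha})$, the action $\beta_{h}$ becomes $x\mapsto\alpha(h)\,x\,\alpha(h)^{-1}$, so by Schur's lemma its fixed points form $\CC\mathds{1}_{\alpha}$, corresponding to $\CC\chi_{\alpha}$; summing over $\alpha\in\Irr(G)$, the closed subspace of class functions in $\LL^{2}(G)$ equals $\overline{\lin\{\chi_{\alpha}\,|\,\alpha\in\Irr(G)\}}^{\,\|\cdot\|_{2}}$. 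Since
\[
\lin\{\chi_{\alpha}\,|\,\alpha\in\Irr(G)\}\ \subseteq\ N\ \subseteq\ M\ \subseteq\ \bigl\{\LL^{2}\text{-class functions}\bigr\}\ =\ \overline{\lin\{\chi_{\alpha}\,|\,\alpha\in\Irr(G)\}}^{\,\|\cdot\|_{2}},
\]
all of the $\|\cdot\|_{2}$-closures in this chain coincide, and the rigidity principle yields $N=M$.

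The step I would expect to be most delicate is not any single computation --- all the harmonic-analytic input (Peter--Weyl, Schur's lemma, existence of $\tau$-preserving conditional expectations onto von Neumann subalgebras of a tracial algebra) is classical --- but rather the bookkeeping around $M$: one must fix the meaning of ``class function'' for an $\LL^{\infty}$-function so that $M$ is exactly the fixed-point algebra of $\beta$ (hence automatically weak-$*$ closed), and one should resist modelling $M$ as $\LL^{\infty}$ of a space of conjugacy classes, which need not be a decent measure space. The $\LL^{2}$-comparison above sidesteps this issue entirely.
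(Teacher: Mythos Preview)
Your argument is correct. The inclusion $N\subseteq M$ is handled exactly as in the paper; for the reverse inclusion you take a genuinely different route. The paper works directly with the normal conditional expectation $\EE\colon f\mapsto\int_G f(h\,\cdot\,h^{-1})\,\md\mu_G(h)$ onto $M$ and computes, via two applications of the orthogonality relations, that $\EE(u^{\alpha}_{\xi,\eta})=\tfrac{\langle\xi\,|\,\eta\rangle}{\dim(\alpha)}\chi_\alpha$; normality of $\EE$ and $w^*$-density of the matrix coefficients then force $M=\EE(\LL^{\infty}(G))\subseteq N$. You instead pass to $\LL^2(G)$, use Peter--Weyl together with Schur's lemma to identify the $\beta$-fixed vectors in each isotypic block as $\CC\chi_\alpha$, and then invoke the tracial ``$\LL^2$-closure determines the subalgebra'' principle (a special, easy case of the paper's Lemma~\ref{lemma4.2.2}). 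Your approach is more conceptual and avoids the explicit double integral, at the cost of not producing the concrete formula for $\EE$ on matrix coefficients; the paper's computation, conversely, gives that formula for free. Your care in defining $M$ as the $\beta$-fixed-point algebra (so that it is automatically a von Neumann subalgebra and one need not worry about the quotient measure space of conjugacy classes) is well placed.
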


\begin{proof}
Since every character $\chi_\alpha$ is a class function, one of the inclusions is clear. Let $\mu_G$ be the Haar measure on $G$ and $\EE \colon f\mapsto \int_G f(h\cdot h^{-1})\md\mu_G(h)\;(f\in\LL^{\infty}(G))$ the normal conditional expectation onto the von Neumann algebra of essentially bounded measurable class fuctions. As matrix coefficients of irreducible representations span a $w^*$-dense subspace in $\LL^{\infty}(G)$, it is enough to show that $\EE(u^{\alpha}_{\xi,\eta})= \frac{\ismaa{\xi}{\eta}}{\dim(\alpha)} \chi_{\alpha}$ for all $\alpha\in\Irr(G)$ and vectors $\xi,\eta\in \msf{H}_\alpha$. Using orthogonality relations one can check that $\ismaa{\Lambda_h( \chi_{\alpha})}{ \Lambda_h(u_{v,w}^{\beta})} = \delta_{\alpha \beta} \frac{\ismaa{v}{w}}{\dim(\alpha)}$, so it is sufficient to check that $\ismaa{\Lambda_h( \mathbb{E}(u_{\xi, \eta}^{\alpha}))}{\Lambda_h( u_{v,w}^{\beta})} = \frac{\delta_{\alpha \beta}}{(\dim(\alpha))^{2}}\ismaa{ \eta}{ \xi}\ismaa{v}{w} $. This can, once again, be verified using the orhogonality relations:
\begin{align*}
\ismaa{\Lambda_h( \mathbb{E}(u_{\xi, \eta}^{\alpha}))}{ \Lambda_h(u_{v,w}^{\beta})} &= \int_{G} \int_{G} \ismaa{ \alpha(hgh^{-1}) \eta}{ \xi}\ismaa{v}{\beta(g) w} \md\mu_{G}(h)\md\mu_{G}(g) \\
&= \int_{G} \ismaa{\Lambda_h( u_{\alpha(h^{-1}) \xi, \alpha(h^{-1})\eta}^{\alpha})}{\Lambda_h( u_{v,w}^{\beta})} \md\mu_{G}(h) \\
&=\frac{ \delta_{\alpha \beta}}{\dim(\alpha)} \int_{G} \ismaa{v}{ \alpha(h^{-1})\xi}\ismaa{ \alpha(h^{-1}) \eta}{w} \md\mu_{G}(h) \\
&= \frac{\delta_{\alpha \beta}}{\dim(\alpha)} \ismaa{\Lambda_h( u_{\xi, v}^{\alpha})}{\Lambda_h( u_{\eta, w}^{\alpha})} \\
&= \frac{\delta_{\alpha \beta}}{(\dim(\alpha))^2} \ismaa{ \eta}{ \xi}\ismaa{ v}{w}.
\end{align*}
%\[\begin{split}
%&\quad\;
%\int_G |\EE(u^{\alpha}_{\xi,\eta})|^2\md \mu_G=
%\int_G \int_G \bigl( \int_G \ov{\ismaa{\xi}{\alpha(h g h^{-1})\eta}}
%\ismaa{\xi}{\alpha(h' g h'^{-1})\eta} \md\mu_G(g)\bigr) \md\mu_G(h)\md\mu_G(h')\\
%&=
%\tfrac{1}{\dim(\alpha)}\int_G \int_G
%\ov{ \ismaa{\alpha(h^{-1})\xi}{\alpha(h'^{-1})\xi}}
%\ismaa{\alpha(h^{-1})\eta}{\alpha(h'^{-1})\eta}\md\mu_G(h) \md\mu_G(h')\\
%&=
%\tfrac{1}{\dim(\alpha)^2}\int_G 
%\ov{\ismaa{\alpha(h^{-1})\xi}{\alpha(h^{-1})\eta}}
%\ismaa{\xi}{\eta}\md\mu_G(h')=0.
%\end{split}\]
\end{proof}
\begin{remark}
Actually, it has been proved in \cite[Theorem 3.7]{MR3679720} that for any compact quantum group $\GG$ the algebra of class functions $\mscr{C}_{\GG}$ is equal to $\{x \in \LL^{\infty}(\GG): \Delta(x) = \Sigma \Delta(x)\}$, where $\Delta: \LL^{\infty}(\GG) \to \LL^{\infty}(\GG) \overline{\otimes} \LL^{\infty}(\GG)$ is the comultiplication and $\Sigma: \LL^{\infty}(\GG) \overline{\otimes} \LL^{\infty}(\GG) \to \LL^{\infty}(\GG) \overline{\otimes} \LL^{\infty}(\GG)$ is the flip; this answered a question left open by Woronowicz, see \cite[after Proposition 5.11]{pseudogroups}. In the classical case it corresponds to functions $f$ satisfying $f(gh) = f(hg)$, i.e. precisely the class functions.
\end{remark}
\begin{lemma}[{\cite[Equation (5.33)]{pseudogroups}}]
Let $\GG$ be a compact quantum group. The Haar integral integral $h$ restricted to $\mscr{C}_{\GG}$ is tracial.
\end{lemma}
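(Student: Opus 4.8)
The plan is to reduce the statement to an elementary fact about fusion multiplicities, using only normality of $h$. First note that $\chi_\alpha^{*}=\chi_{\bar\alpha}$ and $\chi_\alpha\chi_\beta=\chi_{\alpha\otimes\beta}$ (the character of the tensor product representation), so the linear span $\mc{A}:=\lin\{\chi_\alpha\,|\,\alpha\in\IrrG\}$ is a unital $*$-subalgebra of $\LL^{\infty}(\GG)$, which by the bicommutant theorem is $\sigma$-weakly dense in $\mscr{C}_{\GG}$. Since $h$ is a normal state, for every fixed $b\in\mc{A}$ both $x\mapsto h(xb)$ and $x\mapsto h(bx)$ are $\sigma$-weakly continuous functionals on $\mscr{C}_{\GG}$; agreeing on the $\sigma$-weakly dense subspace $\mc{A}$, they must coincide on all of $\mscr{C}_{\GG}$. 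A second application -- now fixing $x\in\mscr{C}_{\GG}$ and letting $b$ range over $\mc{A}$ -- yields $h(xy)=h(yx)$ for all $x,y\in\mscr{C}_{\GG}$. Thus it suffices to prove $h(\chi_\alpha\chi_\beta)=h(\chi_\beta\chi_\alpha)$ for all $\alpha,\beta\in\IrrG$.

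For this, recall that for any finite-dimensional representation $U$ of $\GG$ the element $(\id\otimes h)(U)\in\B(\msf{H}_U)$ is the orthogonal projection onto the subspace of invariant vectors, so that $h(\chi_U)=\Tr\bigl((\id\otimes h)(U)\bigr)=\dim\Mor(\mathds{1},U)$, the multiplicity of the trivial representation in $U$. Combined with $\chi_\alpha\chi_\beta=\chi_{\alpha\otimes\beta}$, the identity to be proved becomes $\dim\Mor(\mathds{1},\alpha\otimes\beta)=\dim\Mor(\mathds{1},\beta\otimes\alpha)$.

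Finally, Frobenius reciprocity in the rigid \cst-tensor category of finite-dimensional representations of $\GG$ gives a linear isomorphism $\Mor(\mathds{1},\alpha\otimes\beta)\cong\Mor(\bar\alpha,\beta)$, which by Schur's lemma is one-dimensional when $\beta\cong\bar\alpha$ and zero otherwise; hence $\dim\Mor(\mathds{1},\alpha\otimes\beta)=\delta_{\beta,\bar\alpha}$. Since $\bar{\bar\alpha}\cong\alpha$, the condition $\beta\cong\bar\alpha$ is symmetric in $\alpha$ and $\beta$, so likewise $\dim\Mor(\mathds{1},\beta\otimes\alpha)=\delta_{\alpha,\bar\beta}=\delta_{\beta,\bar\alpha}$, and the two dimensions agree. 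This finishes the proof.

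There is no genuinely hard step here; the content is the normality-based passage to the dense fusion $*$-algebra $\mc{A}$ together with the formula $h(\chi_U)=\dim\Mor(\mathds{1},U)$. The one point that deserves care is that, unlike for classical compact groups, the fusion ring of $\GG$ need not be commutative (for instance for $U_F^{+}$), so one cannot argue via $\alpha\otimes\beta\cong\beta\otimes\alpha$; what is used instead is the weaker, always-valid symmetry $\dim\Mor(\mathds{1},\alpha\otimes\beta)=\dim\Mor(\mathds{1},\beta\otimes\alpha)$, which comes from the existence of conjugates. Correspondingly, this argument delivers only that $h|_{\mscr{C}_{\GG}}$ is a trace, not that $\mscr{C}_{\GG}$ is abelian.
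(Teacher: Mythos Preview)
The paper does not supply its own proof of this lemma; it simply cites \cite[Equation~(5.33)]{pseudogroups}. Your argument is correct and self-contained: the reduction to the dense $*$-subalgebra spanned by characters via normality of $h$ is sound (the span is unital, so the bicommutant/Kaplansky density step is valid), and the computation $h(\chi_\alpha\chi_\beta)=\dim\Mor(\mathds{1},\alpha\otimes\beta)=\delta_{\beta,\bar\alpha}$ via Frobenius reciprocity and Schur's lemma is exactly the right mechanism. Your closing remark that one cannot simply invoke $\alpha\otimes\beta\cong\beta\otimes\alpha$ (false for $U_F^+$) but only the weaker symmetry of the trivial multiplicity is a good observation and is indeed the point of the argument. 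This is essentially the standard proof and in substance matches Woronowicz's original computation; there is nothing to compare against in the present paper.
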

In most of the examples $\mscr{C}_{\GG}$ will actually be commutative (the fusion rules will be commutative), with the important exception of the free unitary quantum groups $U_{F}^{+}$. 

We will work with non-Kac type compact quantum groups and these objects naturally admit two one-parameter automorphism groups: the modular group (because the Haar integral is non-tracial) and the scaling group (because the antipode is not involutive). For more information on compact quantum groups consult e.g.~\cite{NeshTu}.

There will be two main classes of compact quantum groups to which our results apply: we will start with free orthogonal (unitary) quantum groups.
\begin{definition}[{\cite{MR1382726}}]
Let $N\geqslant 2$ be an integer and let $F$ be an invertible matrix in $M_N(\CC)$. Let
\begin{enumerate}[(i)]
\item $\Pol(U_F^{+})$ be the universal $\ast$-algebra generated by the entries of a unitary matrix $U \in M_{N}(\Pol(U_F^{+}))$ subject to the condition that $F\overline{U} F^{-1}$ is also a unitary matrix;
\item $\Pol(O_{F}^{+})$ be the universal $\ast$-algebra generated by the entries of a unitary matrix $U \in M_{N}(\Pol(O_F^{+}))$ subject to the condition that $U=F\overline{U} F^{-1}$; we also assume in this case that $F\overline{F} = c\mathds{1}$ for some $c\in \RR$.
\end{enumerate}
Then the comultiplication $\Delta(u_{ij}):= \sum_{k=1}^{N} u_{ik} \otimes u_{kj}$ gives both $\Pol(U_F^{+})$ and $\Pol(O_F^{+})$ the structure of a Hopf $\star$-algebra. After the universal $\mathrm{C}^*$-completion, they give rise to compact quantum groups $U_{F}^{+}$ and $O_{F}^{+}$.
\end{definition}

Before introducing the quantum automorphism groups of finite-dimensional $\mathrm{C}^{\ast}$-algebras, we need to recall the definition of a $\delta$-form. Let $B$ be a finite-dimensional $C^{\ast}$-algebra, let $m: B \otimes B \to B$ be the multiplication map, and let $\psi: B \to \CC$ be a faithful state. Then $\psi$ defines natural inner products on both $B$ and $B\otimes B$ and we can compute the Hermitian adjoint of $m$, i.e. $m^{\ast}$. We say that $\psi$ is a $\delta$-form if $mm^{\ast} = \delta^2 \op{Id}$.

\begin{definition}[{\cite{MR1637425}}]
Let $B$ be a finite-dimensional $C^{\ast}$-algebra equipped with a $\delta$-form $\psi$. Then there exists a universal compact quantum group acting on $B$ in a $\psi$-preserving way -- we denote it by $\GG_{Aut}(B,\psi)$ and the corresponding action by $\alpha$. 

It means that whenever we have a compact quantum group $\GG$ with a $\psi$-preserving action $\beta: B \to \mathrm{C}_u(\GG) \otimes B$, i.e. $\beta$ is a $\ast$-homomorphism satsifying $(\Delta \otimes \op{Id})\circ \beta = (\op{Id} \otimes \beta)\circ \beta$ and $(\op{Id} \otimes \psi) (\beta(f)) = \psi(f) \mathds{1}$, then there exists a unique quantum group morphism $\Phi: \mathrm{C}_u(\GG_{Aut}(B,\psi)) \to \mathrm{C}_u(\GG)$ such that $(\Phi\otimes \op{Id})\alpha = \beta$.\\
Let us mention here that in the Kac case, operator algebras associated with $\GG_{Aut}(B,\psi)$ were extensively studied e.g.~in \cite{BrannanTrace}.
\end{definition}

\subsection{Notation}
For a faithful normal state $\omega$ on a von Neumann algebra $\N$ we will denote by $(\sigma^{\omega}_t)_{t\in\RR}$ the modular group, by $\nabla_\omega$ the modular operator, by $J_\omega$ the modular conjugation and by $\Lambda_{\omega}$ the canonical map $\N\rightarrow \LL^2(\N,\omega)$. When there is no risk of confusion, we will simply write $(\sigma_t)_{t\in\RR}$ etc. Similarly, for a compact quantum group  $\GG$ we will often write $(\tau_t)_{t\in\RR}$ for the scaling group. We will use von Neumann subalgebras $\LL^{\infty}(\GG)^{\tau},\LL^{\infty}(\GG)^{\sigma}\subseteq\Linf$ of those $x\in \Linf$ which are invariant under the scaling group (respectively~the modular group of the Haar integral $h$). Algebraic tensor products will be denoted by $\odot$.

\section{Quasi-split inclusions}\label{sec:quasi-split}
The split property arose in the study of inclusions of von Neumann algebras appearing in algebraic quantum field theory (see \cite{MR345546}). A more systematic study of related properties was conducted in \cite{MR735338}. The main tool for proving that a given inclusion is split was developed in \cite{MR1038440}, albeit only in the case of factor inclusions, and we need to resort to subsequent work \cite{MR1855628}, which deals with the general case. The precise criterion that we will use comes from \cite{Bikram}. 
\begin{definition}\label{def:quasi-split}
Let $\msf{N}\subseteq \msf{M}$ be an inclusion of von Neumann algebras. We say this inclusion is \textbf{split} if there exists an intermediate type $\operatorname{I}$ factor $\msf{B}$. We say that the inclusion is \textbf{quasi-split} if the $\ast$-homomorphism $\msf{N} \odot \msf{M}^{op} \ni x\otimes y^{op} \mapsto x Jy^{\ast} J \in \B(\LL^{2}(\mathsf{M}))$, where $L^{2}(\mathsf{M})$ is the standard form, extends to a surjective normal $\ast$-homomorphism from $\msf{N} \overline{\otimes} \msf{M}^{op}$ onto $\msf{N} \vee \msf{M}^{\prime}$.
\end{definition}
\begin{remark}\label{rem:splitfactor}
If both $\msf{N}$ and $\msf{M}$ are factors or one of them is a type $\operatorname{III}$ algebra then a quasi-split inclusion is automatically split (see \cite[Corollary 1, (iv) and (vi)]{MR703083}).
\end{remark}
The most important consequence of the quasi-split property for our work is that it can be used as a tool for proving that some abelian subalgebras are not MASAs.
\begin{proposition}[{\cite[Corollary 3.11]{Bikram}}]\label{Prop:notmasa}
If $\mathsf{M}$ is a type $\operatorname{III}$ von Neumann algebra and $\msf{N} \subseteq \msf{M}$ is a quasi-split inclusion then $\msf{N}^{\prime} \cap \msf{M}$ is also of type $\operatorname{III}$. In particular $\msf{N}$ cannot be a MASA.
\end{proposition}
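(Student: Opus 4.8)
The plan is to deduce the statement formally from Definition~\ref{def:quasi-split} together with a few standard stability properties of type $\op{III}$ von Neumann algebras; no analysis beyond the definition is needed, the argument is purely structural.

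First I would record the identification of the relative commutant. Since $\mathsf{M}$ is represented on its standard form $\LL^{2}(\mathsf{M})$ we have $\mathsf{M}'' = \mathsf{M}$, and therefore
\[
(\mathsf{N} \vee \mathsf{M}')' \;=\; \mathsf{N}' \cap (\mathsf{M}')' \;=\; \mathsf{N}' \cap \mathsf{M},
\]
so that the relative commutant appearing in the statement is exactly the commutant of $\mathsf{N} \vee \mathsf{M}'$. Hence it is enough to prove that $\mathsf{N} \vee \mathsf{M}'$ is of type $\op{III}$ and then to invoke the fact that the commutant of a type $\op{III}$ von Neumann algebra is again of type $\op{III}$.

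Next I would feed in the quasi-split hypothesis: by Definition~\ref{def:quasi-split} there is a normal surjective $\ast$-homomorphism $\pi \colon \mathsf{N} \overline{\otimes} \mathsf{M}^{op} \to \mathsf{N} \vee \mathsf{M}'$. Now $\mathsf{M}$ is of type $\op{III}$, hence so is $\mathsf{M}^{op}$ (it is $\ast$-isomorphic to $\mathsf{M}'$), and the von Neumann algebraic tensor product of a type $\op{III}$ algebra with an arbitrary one is again of type $\op{III}$; thus $\mathsf{N} \overline{\otimes} \mathsf{M}^{op}$ is of type $\op{III}$. The kernel of a normal $\ast$-homomorphism of von Neumann algebras is of the form $z\,(\mathsf{N} \overline{\otimes} \mathsf{M}^{op})$ for a central projection $z$, so $\mathsf{N} \vee \mathsf{M}' \cong (1-z)(\mathsf{N} \overline{\otimes} \mathsf{M}^{op})$ is a corner of a type $\op{III}$ algebra, hence of type $\op{III}$. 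Combining with the previous paragraph, $\mathsf{N}' \cap \mathsf{M} = (\mathsf{N} \vee \mathsf{M}')'$ is of type $\op{III}$. For the ``in particular'' clause I would argue by contradiction: if $\mathsf{N}$ were a MASA in $\mathsf{M}$ then $\mathsf{N}' \cap \mathsf{M} = \mathsf{N}$ would be simultaneously abelian and of type $\op{III}$, which is impossible since a nonzero abelian von Neumann algebra is of type $\op{I}$.

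The proof has no genuine obstacle; the point I would be most careful about is that each stability statement invoked — passage to the opposite algebra, tensoring, quotients by $\sigma$-weakly closed two-sided ideals, and passage to commutants — must be used in the generality of algebras that are not necessarily factors, since none of $\mathsf{N}$, $\mathsf{M}^{op}$, $\mathsf{N} \overline{\otimes} \mathsf{M}^{op}$, $\mathsf{N} \vee \mathsf{M}'$ is assumed factorial; all of these are nonetheless standard in that generality. (One could alternatively note that, by Remark~\ref{rem:splitfactor}, the inclusion is even split and argue through an intermediate type $\op{I}$ factor, but this refinement is not needed.)
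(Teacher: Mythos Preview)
Your argument is correct and is exactly the standard structural proof of this fact. Note, however, that the paper does not supply its own proof of this proposition: it is quoted verbatim from \cite[Corollary 3.11]{Bikram} and used as a black box, so there is no ``paper's proof'' to compare against. Your write-up would serve perfectly well as a self-contained justification; the only minor point worth making explicit is that the surjection $\pi$ is nonzero (since $\mathsf{N}\vee\mathsf{M}'\ni\mathds{1}$), so that the central projection $z$ satisfies $z\neq\mathds{1}$ and the corner $(1-z)(\mathsf{N}\overline{\otimes}\mathsf{M}^{op})$ is genuinely type~$\op{III}$ rather than $\{0\}$.
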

We would like to present now a useful criterion for proving that a given inclusion is quasi-split, which is a variant of Proposition 2.3 from \cite{MR1038440}. Recall first that if a von Neumann algebra $\msf{M}$ is represented on a Hilbert space $\msf{H}$ with a cyclic and separating vector $\Omega$ then $\msf{H}$ can be identified with $\LL^{2}(\msf{M})$ and we have an inclusion $\Phi_{2}: \msf{M} \to \LL^{2}(\msf{M})$ given by $x \mapsto \nabla^{\frac{1}{4}} x\Omega$. 

We will also need the notion of a nuclear map between two Banach spaces $X$ and $Y$. A map $T:X \to Y$ is called nuclear if there are sequences $(y_n)_{n\in \NN} \subseteq Y$ and $(x_n^{\ast})_{n\in \NN} \subseteq X^{\ast}$ such that $Tx = \sum_{n\in \NN} x_{n}^{\ast}(x) y_n$ and $\sum_{n\in \NN} \|x_n^{\ast}\|  \|y_n\| < \infty$.

\begin{proposition}[{\cite[Proposition 3.7]{Bikram}}]\label{Prop:nuclear}
Let $\mathsf{N} \subseteq \mathsf{M}$ be a pair of von Neumann algebras. If the map $\Phi_{2| \mathsf{N}}: \msf{N} \to \LL^{2}(\msf{M})$ is nuclear then the inclusion $\mathsf{N} \subseteq \msf{M}$ is quasi-split.
\end{proposition}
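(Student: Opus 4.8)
The plan is to use the nuclear decomposition of $\Phi_{2|\msf N}$ to construct, by hand, a normal extension of the canonical $\ast$-homomorphism whose existence is exactly the quasi-split property. Write $\pi\colon\msf N\odot\msf M^{op}\to\B(\LL^{2}(\msf M))$, $x\otimes y^{op}\mapsto xJy^{\ast}J$. Since $y^{op}\mapsto Jy^{\ast}J$ is a normal $\ast$-isomorphism of $\msf M^{op}$ onto $\msf M^{\prime}$ and $\msf N\subseteq\msf M$ commutes with $\msf M^{\prime}$, this $\pi$ is a well-defined $\ast$-homomorphism whose range is the $\ast$-algebra generated by $\msf N$ and $\msf M^{\prime}$, hence $\sigma$-weakly dense in $\msf N\vee\msf M^{\prime}$. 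Note also that \emph{once} $\pi$ is known to extend to a normal $\ast$-homomorphism $\ov\pi\colon\msf N\ov\otimes\msf M^{op}\to\B(\LL^{2}(\msf M))$, its range is automatically a von Neumann algebra containing the dense range of $\pi$ and --- because $\ov\pi$ sends the generators $x\otimes 1$ and $1\otimes y^{op}$ into $\msf N$ and $\msf M^{\prime}$ respectively --- contained in $\msf N\vee\msf M^{\prime}$; thus it equals $\msf N\vee\msf M^{\prime}$. So everything reduces to producing the normal extension $\ov\pi$.

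The computational input is the identity, valid for $x\in\msf N$, $y\in\msf M$,
\[
\langle\pi(x\otimes y^{op})\Omega,\Omega\rangle=\langle x\nabla^{\frac{1}{2}}y\Omega,\Omega\rangle=\bigl\langle\nabla^{\frac{1}{4}}y\Omega,\nabla^{\frac{1}{4}}x^{\ast}\Omega\bigr\rangle=\langle\Phi_{2}(y),\Phi_{2}(x^{\ast})\rangle,
\]
which follows from $Jy^{\ast}J\Omega=\nabla^{\frac{1}{2}}y\Omega$ together with $x^{\ast}\Omega,\nabla^{\frac{1}{4}}y\Omega\in\Dom(\nabla^{\frac{1}{4}})$. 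Here $\Phi_{2}\colon\msf M\to\LL^{2}(\msf M)$, $a\mapsto\nabla^{\frac{1}{4}}a\Omega$, is a contraction ($\|\nabla^{\frac{1}{4}}a\Omega\|^{2}=\langle\nabla^{\frac{1}{2}}a\Omega,a\Omega\rangle\le\|a^{\ast}\Omega\|\,\|a\Omega\|\le\|a\|^{2}$) and is $\sigma$-weakly-to-weakly continuous, since against vectors in $\Dom(\nabla^{\frac{1}{4}})$ its matrix coefficients are manifestly normal functionals and the general ones are norm-limits of these in $\msf M^{\ast}$. Now pick a nuclear decomposition $\Phi_{2|\msf N}(x)=\sum_{n}f_{n}(x)\xi_{n}$ with $\sum_{n}\|f_{n}\|\,\|\xi_{n}\|<\infty$; applying the canonical contractive projection $\msf N^{\ast}\to\msf N_{\ast}$ to each $f_{n}$ does not change the map, because the ``singular part'' of a $\sigma$-weakly continuous nuclear map is zero, so we may assume $f_{n}\in\msf N_{\ast}$. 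Substituting into the identity and putting $g_{n}(x):=\ov{f_{n}(x^{\ast})}\in\msf N_{\ast}$ and $h_{n}(y):=\langle\Phi_{2}(y),\xi_{n}\rangle\in\msf M_{\ast}$, one gets $\langle\pi(\,\cdot\,)\Omega,\Omega\rangle=\sum_{n}g_{n}\otimes h_{n}^{op}$ on $\msf N\odot\msf M^{op}$, with $\sum_{n}\|g_{n}\|\,\|h_{n}^{op}\|\le\sum_{n}\|f_{n}\|\,\|\xi_{n}\|<\infty$; hence this state extends to a normal functional $\widehat\psi\in(\msf N\ov\otimes\msf M^{op})_{\ast}$.

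It remains to upgrade the single functional $\widehat\psi$ to a normal $\ast$-homomorphism. The vectors $\pi(z)\Omega$, $z\in\msf N\odot\msf M^{op}$, are total in $\LL^{2}(\msf M)$ --- they already contain $J\msf M\Omega$. For $z,z^{\prime}\in\msf N\odot\msf M^{op}$ the functional $w\mapsto\widehat\psi\bigl((z^{\prime})^{\ast}wz\bigr)$ is normal on $\msf N\ov\otimes\msf M^{op}$, and on the $\sigma$-weakly dense subalgebra $\msf N\odot\msf M^{op}$ it coincides with $w\mapsto\langle\pi(w)\pi(z)\Omega,\pi(z^{\prime})\Omega\rangle$; in particular it vanishes identically whenever $\pi(z)\Omega=0$. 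Therefore the prescription $\langle\ov\pi(w)\pi(z)\Omega,\pi(z^{\prime})\Omega\rangle:=\widehat\psi\bigl((z^{\prime})^{\ast}wz\bigr)$ is consistent, and the bound $|\widehat\psi((z^{\prime})^{\ast}wz)|\le\|w\|\,\|\pi(z)\Omega\|\,\|\pi(z^{\prime})\Omega\|$, obtained from the algebraic tensor product by Kaplansky density, shows $\ov\pi(w)$ is a bounded operator with $\|\ov\pi(w)\|\le\|w\|$. Normality of each $w\mapsto\widehat\psi((z^{\prime})^{\ast}wz)$, the uniform bound, and totality of the $\pi(z)\Omega$ then make $\ov\pi$ $\sigma$-weakly continuous; since $\ov\pi$ restricts to the $\ast$-homomorphism $\pi$ on the $\sigma$-weakly dense $\msf N\odot\msf M^{op}$, separate $\sigma$-weak continuity of multiplication and of the adjoint force $\ov\pi$ to be a normal $\ast$-homomorphism on all of $\msf N\ov\otimes\msf M^{op}$. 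By the first paragraph this $\ov\pi$ is the required surjection onto $\msf N\vee\msf M^{\prime}$, so the inclusion is quasi-split.

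I expect the main obstacle to be precisely this last step: nuclearity only yields normality of the \emph{single} functional $\langle\pi(\,\cdot\,)\Omega,\Omega\rangle$, and turning this into a normal homomorphism requires the GNS-type construction above --- in particular the well-definedness of $\ov\pi(w)$ on the (non-separating) family $\pi(z)\Omega$ and the verification of multiplicativity by a density argument rather than a closed formula. A secondary technical point is the reduction to normal functionals in the nuclear decomposition, which uses $\sigma$-weak continuity of $\Phi_{2|\msf N}$ and the Takesaki decomposition $\msf N^{\ast}=\msf N_{\ast}\oplus\msf N_{\ast}^{\perp}$.
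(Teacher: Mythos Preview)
The paper does not prove this proposition; it is quoted verbatim from \cite[Proposition~3.7]{Bikram} (itself a variant of \cite{MR1038440,MR1855628}), so there is no in-paper argument to compare against. Your proof is essentially the standard one and is largely correct --- it is in fact the GNS construction for the normal functional $\widehat\psi$, followed by the identification of the GNS space with $\LL^{2}(\msf M)$ via $\pi_{\widehat\psi}(z)\xi_{\widehat\psi}\mapsto\pi(z)\Omega$.

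There is, however, one genuine gap in the justification. You claim the bound $|\widehat\psi((z')^{\ast}wz)|\le\|w\|\,\|\pi(z)\Omega\|\,\|\pi(z')\Omega\|$ ``from the algebraic tensor product by Kaplansky density''. This is circular: for algebraic $w$ one only gets $|\langle\pi(w)\pi(z)\Omega,\pi(z')\Omega\rangle|\le\|\pi(w)\|\,\|\pi(z)\Omega\|\,\|\pi(z')\Omega\|$, and the inequality $\|\pi(w)\|\le\|w\|_{\min}$ is precisely what the quasi-split property asserts --- it is \emph{not} automatic for a product of two commuting representations (only the max-norm bound is). The correct route is to first observe that $\widehat\psi$ is \emph{positive} on $\msf N\ov\otimes\msf M^{op}$: for $z$ algebraic $\widehat\psi(z^{\ast}z)=\|\pi(z)\Omega\|^{2}\ge0$, and a $\sigma$-strong$^{\ast}$ Kaplansky approximation $v_{i}\to v$ gives $v_{i}^{\ast}v_{i}\to v^{\ast}v$ $\sigma$-weakly, so positivity passes to all of $\msf N\ov\otimes\msf M^{op}$ by normality. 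Once $\widehat\psi\ge0$, the Cauchy--Schwarz inequality for positive functionals yields
\[
|\widehat\psi((z')^{\ast}wz)|\le\widehat\psi((z')^{\ast}z')^{1/2}\,\widehat\psi(z^{\ast}w^{\ast}wz)^{1/2}\le\|w\|\,\|\pi(z')\Omega\|\,\|\pi(z)\Omega\|,
\]
which is the bound you need. With this fix (and noting that the GNS representation of a normal state is automatically normal, which streamlines your last paragraph), the argument goes through.
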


\subsection{Quasi-split inclusion of the algebra of class functions}
In this subsection we will present examples of compact quantum groups for which the inclusion of the algebra of class functions is quasi-split. Before doing that we need some preparation regarding the action of the modular group on characters of unitary representations.

Recall that our aim is to show that the map $\Phi_{2|\mscr{C}_{\GG}}: \mscr{C}_{\GG} \to \LL^{2}(\GG)$ given by $x \mapsto \nabla^{\frac{1}{4}} \Lambda_{h}(x)$ is nuclear. Note that $\mscr{C}_{\GG}$ is given by the closed linear span of the characters of the (finite dimensional) unitary representations and these characters are analytic elements for the modular group. Therefore $\nabla^{\frac{1}{4}} \Lambda_h(\chi) = \Lambda_h(\sigma_{-\frac{i}{4}}(\chi))$ holds for every character $\chi$; we first have to understand the action of the modular group on the characters. 

Recall that (see \cite[Page 30]{NeshTu}) for any representation $U$ on $\msf{H}_{U}$ we have $(\id \otimes \sigma_z)(U) = (\uprho_U^{iz} \otimes \I_{U})U(\uprho_U^{iz} \otimes \I_{U})$. If we choose an orthonormal basis of $\msf{H}_{U}$ in which $\uprho_{U}$ is diagonal then we can write more concretely that $\sigma_{z}(u_{kl}) = \uprho_{U,k}^{iz} \uprho_{U,l}^{iz} u_{kl}$. Therefore for the character $\chi_U:= \sum_{k} u_{kk}$ we have $\sigma_z(\chi_U) = \sum_{k} \uprho_{U,k}^{2iz} u_{kk}$. We will now compute the $\LL^{2}$-norm of this element.
\begin{lemma}\label{modularcharacter}
If $U$ is irreducible then we have $\|\sigma_{a+ib}(\chi_U)\|_2^2 = \frac{\op{Tr} (\uprho_U^{-4b-1})}{\op{Tr}(\uprho_U)}$ for all $a,b\in\RR$.
\end{lemma}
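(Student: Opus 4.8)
The plan is to compute the $\LL^2$-norm directly from the explicit formula for the action of the modular group on the character, which was recalled just before the statement. First I would use the fact that in an orthonormal basis diagonalizing $\uprho_U$ we have $\sigma_{a+ib}(\chi_U) = \sum_k \uprho_{U,k}^{2i(a+ib)} u_{kk} = \sum_k \uprho_{U,k}^{2ia - 2b} u_{kk}$, so that $\Lambda_h(\sigma_{a+ib}(\chi_U)) = \sum_k \uprho_{U,k}^{2ia-2b}\Lambda_h(u_{kk})$. Then the $\LL^2$-norm squared is $\sum_{k,l} \uprho_{U,k}^{2ia-2b}\,\overline{\uprho_{U,l}^{2ia-2b}}\, \langle \Lambda_h(u_{kk}) \mid \Lambda_h(u_{ll})\rangle$, so everything reduces to knowing the Gram matrix of the $\Lambda_h(u_{kk})$ with respect to the Haar state.

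The key computational input is the orthogonality relations for matrix coefficients of an irreducible representation $U$: with the $\uprho$-normalization one has $h(u_{ij}^* u_{kl}) = \delta_{ik}\delta_{jl}\, \dfrac{\uprho_{U,l}^{-1}}{\Tr(\uprho_U)}$ (this is the standard Woronowicz orthogonality relation, e.g.\ \cite[Section 1.4]{NeshTu}, for the representative whose $\uprho$-matrix has $\Tr(\uprho_U)=\Tr(\uprho_U^{-1})$). Applying this with $i=j=k$ and $k=l=l'$, we get $\langle \Lambda_h(u_{kk}) \mid \Lambda_h(u_{ll})\rangle = h(u_{ll}^* u_{kk}) = \delta_{kl}\,\dfrac{\uprho_{U,k}^{-1}}{\Tr(\uprho_U)}$. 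Hence the double sum collapses to a single sum:
\begin{equation*}
\|\sigma_{a+ib}(\chi_U)\|_2^2 = \sum_k |\uprho_{U,k}^{2ia-2b}|^2 \cdot \frac{\uprho_{U,k}^{-1}}{\Tr(\uprho_U)} = \frac{1}{\Tr(\uprho_U)}\sum_k \uprho_{U,k}^{-4b}\,\uprho_{U,k}^{-1} = \frac{\Tr(\uprho_U^{-4b-1})}{\Tr(\uprho_U)},
\end{equation*}
using that $\uprho_{U,k}>0$ so $|\uprho_{U,k}^{2ia-2b}|^2 = \uprho_{U,k}^{-4b}$, and recognizing $\sum_k \uprho_{U,k}^{-4b-1} = \Tr(\uprho_U^{-4b-1})$ since the basis diagonalizes $\uprho_U$. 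Note the $a$-dependence drops out entirely, consistent with the modular group being unitary, which is a useful sanity check.

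The only genuinely delicate point is fixing conventions: one must make sure the $\uprho$-matrix used here is exactly the one normalized by $\Tr(\uprho_U) = \Tr(\uprho_U^{-1})$ (as specified in the excerpt after Theorem~\ref{Thm:nonmasa}), since the orthogonality relations and the formula for $\sigma_z$ on matrix coefficients must both be stated for the same representative of the equivalence class of $U$; with a different normalization one would pick up a scalar that could obscure the clean formula. Assuming these are aligned — which they are in the references used — the computation above is the whole proof, and there is no real obstacle beyond this bookkeeping. I would also remark that the justification $\nabla^{1/4}\Lambda_h(\chi_U) = \Lambda_h(\sigma_{-i/4}(\chi_U))$ for characters (and more generally $\nabla^{ic}\Lambda_h(x) = \Lambda_h(\sigma_c(x))$ extended analytically) relies on $\chi_U$ being analytic for the modular group, which holds because it is a finite linear combination of matrix coefficients; this is what makes the lemma immediately applicable to the nuclearity estimate for $\Phi_{2|\mscr{C}_\GG}$ in the sequel.
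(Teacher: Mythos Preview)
Your proof is correct and follows essentially the same route as the paper: expand $\sigma_{a+ib}(\chi_U)$ in the diagonalizing basis, apply the Woronowicz orthogonality relations $h(u_{kk}^* u_{ll}) = \delta_{kl}\,\uprho_{U,k}^{-1}/\dim_q(U)$, and collapse the double sum to $\sum_k \uprho_{U,k}^{-4b-1}/\Tr(\uprho_U)$. The additional remarks on normalization conventions and analyticity are accurate and helpful context, though not strictly needed for the argument.
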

\begin{proof}
Recall that by definition $\|x\|_2^2 = h(x^{\ast}x)$, so in our case we get
\[
\|\sigma_{a+ib}(\chi_U)\|_2^2 = \sum_{k,l} \uprho_{U,k}^{-2ia-2b} \uprho_{U,l}^{2ia - 2b} h(u_{kk}^{\ast} u_{ll}).
\]
Using the orthogonality relations (see \cite[Theorem 1.4.3]{NeshTu}) we get $h(u_{kk}^{\ast} u_{ll}) = \delta_{k,l} \frac{\uprho^{-1}_{U,k}}{\op{dim}_q(U)}$. Therefore we obtain
\[
\|\sigma_{a+ib}(\chi_U)\|_2^2 = \sum_{k} \uprho_{U,k}^{-4b} \frac{\uprho_{U,k}^{-1}}{\op{dim}_q(U)} = \frac{\op{Tr}(\uprho_{U}^{-4b-1})}{\op{dim}_q(U)}.
\]
To finish the proof we just have to recall that $\dim_q(U) = \op{Tr}(\uprho_U)$.
\end{proof}
\begin{corollary}\label{Cor:modularchar}
We have $\|\chi_U\|_2=1$ and $\|\sigma_{-\frac{i}{4}}(\chi_U)\|_2^2 = \frac{\op{dim}(U)}{\op{dim}_q(U)}$.
\end{corollary}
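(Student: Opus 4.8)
Both identities are immediate specializations of Lemma \ref{modularcharacter}; the only external input needed is the normalization of the matrices $\uprho_U$ for irreducible $U$ and the identity $\dim_q(U)=\op{Tr}(\uprho_U)$ already recorded above.

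First I would take $a=b=0$ in Lemma \ref{modularcharacter}, which gives
\[
\|\chi_U\|_2^2 = \frac{\op{Tr}(\uprho_U^{-1})}{\op{Tr}(\uprho_U)}.
\]
Now recall that for an irreducible representation $U$ the matrix $\uprho_U$ is, by definition, the \emph{unique} positive intertwiner between $U$ and its double conjugate satisfying $\op{Tr}(\uprho_U)=\op{Tr}(\uprho_U^{-1})$ (this is precisely the normalization fixed in the introduction, cf.~\cite[Section 1.4]{NeshTu}). Hence the numerator and denominator coincide and $\|\chi_U\|_2^2 = 1$, i.e.~$\|\chi_U\|_2 = 1$.

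For the second identity I would take $a=0$ and $b=-\tfrac14$ in Lemma \ref{modularcharacter}, so that $\sigma_{a+ib} = \sigma_{-\frac{i}{4}}$ and the exponent becomes $-4b-1 = 1-1 = 0$. Therefore
\[
\|\sigma_{-\frac{i}{4}}(\chi_U)\|_2^2 = \frac{\op{Tr}(\uprho_U^{0})}{\op{Tr}(\uprho_U)} = \frac{\op{Tr}(\I_U)}{\op{Tr}(\uprho_U)} = \frac{\op{dim}(U)}{\op{dim}_q(U)},
\]
using $\op{Tr}(\I_U)=\op{dim}(U)$ and $\op{dim}_q(U)=\op{Tr}(\uprho_U)$. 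This completes the argument; there is no real obstacle here beyond correctly tracking the exponent $-4b-1$ and invoking the normalization $\op{Tr}(\uprho_U)=\op{Tr}(\uprho_U^{-1})$ that holds precisely because $U$ is irreducible.
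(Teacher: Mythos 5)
Your proposal is correct and is exactly the argument the paper intends: the corollary is stated without proof as an immediate specialization of Lemma \ref{modularcharacter}, obtained by setting $b=0$ (using the normalization $\Tr(\uprho_U)=\Tr(\uprho_U^{-1})$) and $b=-\tfrac14$ (so that $\uprho_U^{-4b-1}=\I_U$), together with $\dim_q(U)=\Tr(\uprho_U)$. Nothing further is needed.
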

The relation between quantum dimension and the usual dimension will be crucial for proving that the inclusion of the algebra of class functions is quasi-split. We will present here a precise criterion and then describe a class of compact quantum groups to which it applies.
\begin{theorem}\label{Thm:quasi-split}
Let $\GG$ be a compact quantum group. Suppose that $\sum_{\alpha \in \Irr(\GG)} \left(\frac{\op{dim}(\alpha)}{\op{dim}_q(\alpha)}\right)^{\frac{1}{2}} < \infty$. Then the inclusion $\mscr{C}_{\GG} \subseteq \LL^{\infty}(\GG)$ is quasi-split.
\end{theorem}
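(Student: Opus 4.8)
The plan is to apply the nuclearity criterion of Proposition \ref{Prop:nuclear}: it suffices to exhibit the restriction $\Phi_{2|\mscr{C}_{\GG}} \colon \mscr{C}_{\GG} \to \LL^2(\GG)$, $x \mapsto \nabla^{\frac14}\Lambda_h(x)$, as a nuclear map, i.e.~to write it as an absolutely summable series $\sum_n x_n^* \otimes y_n$ with $x_n^* \in \mscr{C}_{\GG}^*$ and $y_n \in \LL^2(\GG)$. The natural ``basis'' to use is the family of characters $(\chi_\alpha)_{\alpha \in \Irr(\GG)}$. By Corollary \ref{Cor:modularchar} these are orthonormal in $\LL^2(\GG)$, so they form an orthonormal system (though not a basis of $\LL^2(\GG)$, they do form an orthonormal basis of the closure of $\Lambda_h(\mscr{C}_{\GG})$, which is all we need). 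Since $\mscr{C}_{\GG}$ is the $w^*$-closed span of the $\chi_\alpha$, any $x \in \mscr{C}_{\GG}$ has an $\LL^2$-expansion $\Lambda_h(x) = \sum_\alpha \is{\Lambda_h(\chi_\alpha)}{\Lambda_h(x)}\,\Lambda_h(\chi_\alpha)$, with coefficients $c_\alpha(x) := \is{\Lambda_h(\chi_\alpha)}{\Lambda_h(x)}$.

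Next I would compute the action of $\Phi_2$ on this expansion. Because each $\chi_\alpha$ is analytic for the modular group, $\nabla^{\frac14}\Lambda_h(\chi_\alpha) = \Lambda_h(\sigma_{-\frac{i}{4}}(\chi_\alpha))$, and by Corollary \ref{Cor:modularchar} this vector has $\LL^2$-norm exactly $\bigl(\tfrac{\dim(\alpha)}{\dim_q(\alpha)}\bigr)^{\frac12}$. Formally, then,
\begin{equation*}
\Phi_{2|\mscr{C}_{\GG}}(x) = \nabla^{\frac14}\Lambda_h(x) = \sum_{\alpha \in \Irr(\GG)} c_\alpha(x)\,\Lambda_h\bigl(\sigma_{-\frac{i}{4}}(\chi_\alpha)\bigr),
\end{equation*}
so I would set $y_\alpha := \Lambda_h(\sigma_{-\frac{i}{4}}(\chi_\alpha)) \in \LL^2(\GG)$ and let $x_\alpha^* \in \mscr{C}_{\GG}^*$ be the functional $x \mapsto c_\alpha(x)$. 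One checks $\|x_\alpha^*\| \le 1$ since $\|\chi_\alpha\|_2 = 1$ and $x_\alpha^*(x) = \is{\Lambda_h(\chi_\alpha)}{\Lambda_h(x)}$ with $\|x\|_\infty \ge$ the relevant $\LL^2$-quantity — more carefully, $|c_\alpha(x)| = |h(\chi_\alpha^* x)| \le \|\chi_\alpha\|_2 \|x\|_2 \le \|\chi_\alpha\|_2\|x\|_\infty = \|x\|_\infty$, so indeed $\|x_\alpha^*\| \le 1$. Meanwhile $\|y_\alpha\| = \bigl(\tfrac{\dim(\alpha)}{\dim_q(\alpha)}\bigr)^{\frac12}$, so $\sum_\alpha \|x_\alpha^*\|\,\|y_\alpha\| \le \sum_\alpha \bigl(\tfrac{\dim(\alpha)}{\dim_q(\alpha)}\bigr)^{\frac12} < \infty$ by hypothesis. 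This is exactly the nuclearity estimate, and Proposition \ref{Prop:nuclear} then gives the quasi-split conclusion.

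The main technical point to justify carefully — and the step I expect to require the most care — is the interchange of the (possibly infinite) $\LL^2$-sum $\Lambda_h(x) = \sum_\alpha c_\alpha(x)\Lambda_h(\chi_\alpha)$ with the unbounded operator $\nabla^{\frac14}$. The clean way around this is to not differentiate at all but to work directly with the formula for $\Phi_2$: first establish the identity on the dense subalgebra $\lin\{\chi_\alpha : \alpha \in \Irr(\GG)\}$ (where everything is a finite sum and the computation is literal), obtaining the bounded operator $T \colon \mscr{C}_{\GG} \to \LL^2(\GG)$ defined by the absolutely convergent series $Tx := \sum_\alpha c_\alpha(x) y_\alpha$; then note $T$ is bounded (by the estimate above) and agrees with $\Phi_{2|\mscr{C}_{\GG}}$ on a $w^*$-dense, $\|\cdot\|_2$-dense subalgebra, and both $T$ and $\Phi_{2|\mscr{C}_{\GG}}$ are continuous from $(\mscr{C}_{\GG}, \|\cdot\|_\infty)$ to $\LL^2(\GG)$ (for $\Phi_2$ this uses that $\Phi_2$ is $\|\cdot\|$-to-$\|\cdot\|_2$ continuous on $\mscr{C}_{\GG}$, e.g.~because $\mscr{C}_{\GG}$ sits inside the fixed point algebra of $\sigma$ after a harmless modification, or directly since $\|\Phi_2(\chi)\|_2 \le \|\chi\|_{\infty}$-type bounds hold on characters and pass to the closure). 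Hence $T = \Phi_{2|\mscr{C}_{\GG}}$, and the series realizing $T$ witnesses nuclearity. A secondary routine check is that the functionals $x \mapsto c_\alpha(x)$ are genuinely elements of the predual-compatible dual $\mscr{C}_{\GG}^*$ — they are, being of the form $x \mapsto h(\chi_\alpha^* x)$, i.e.~normal — though for the definition of nuclear map only boundedness of the $x_\alpha^*$ as elements of the Banach dual is required.
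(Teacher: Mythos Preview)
Your overall strategy is exactly the paper's: use the functionals $x\mapsto h(\chi_\alpha^* x)$ and the vectors $\nabla^{1/4}\Lambda_h(\chi_\alpha)$, bound them by $1$ and $(\dim(\alpha)/\dim_q(\alpha))^{1/2}$ respectively, and invoke Proposition~\ref{Prop:nuclear}. The nuclearity estimate itself is fine.

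The gap is precisely where you flagged it: the passage from the identity $T=\Phi_{2}$ on $\lin\{\chi_\alpha\}$ to the identity on all of $\mscr{C}_{\GG}$. Both of your suggested justifications fail. First, $\mscr{C}_{\GG}$ does \emph{not} sit inside the centraliser $\LL^{\infty}(\GG)^{\sigma}$ in general; indeed (see Proposition~\ref{Prop:masafactor}) $\mscr{C}_{\GG}\cap\LL^{\infty}(\GG)^{\sigma}=\overline{\lin}^{w^*}\{\chi_\alpha:\uprho_\alpha=\mathds{1}_\alpha\}$, which is typically a proper subalgebra (and trivial under the hypotheses of Theorem~\ref{Thm:nonmasa}). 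Second, $\lin\{\chi_\alpha\}$ is only $w^*$-dense in $\mscr{C}_{\GG}$, not norm-dense, so a $\|\cdot\|_\infty$-bound on characters does not pass to the closure.

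The paper resolves this by proving directly that $\nabla^{1/4}$ extends to a \emph{contraction} from $\LL^2(\mscr{C}_{\GG}):=\overline{\Lambda_h(\mscr{C}_{\GG})}$ to $\LL^2(\GG)$. The key observation is that $\chi_\alpha^*=\chi_{\overline{\alpha}}$, so for a finite sum $x=\sum_\alpha c_\alpha\chi_\alpha$ one has $\|\Lambda_h(x^*)\|=\|\Lambda_h(x)\|$; then
\[
\|\nabla^{1/4}\Lambda_h(x)\|^2=\ismaa{\Lambda_h(x)}{\nabla^{1/2}\Lambda_h(x)}=\ismaa{\Lambda_h(x^*)}{J\Lambda_h(x)}\le\|\Lambda_h(x^*)\|\,\|\Lambda_h(x)\|=\|\Lambda_h(x)\|^2.
\]
Once $\nabla^{1/4}$ is bounded on $\LL^2(\mscr{C}_{\GG})$, the orthonormal expansion $\Lambda_h(x)=\sum_\alpha c_\alpha(x)\Lambda_h(\chi_\alpha)$ can be pushed through $\nabla^{1/4}$ term by term for every $x\in\mscr{C}_{\GG}$, and your nuclear decomposition follows immediately. (An alternative repair of your argument: take a bounded Kaplansky net $x_i\to x$ strongly with $x_i\in\lin\{\chi_\alpha\}$; then $\Lambda_h(x_i)\to\Lambda_h(x)$ in $\LL^2$, your $T$ is normal so $Tx_i\to Tx$, and closedness of the self-adjoint operator $\nabla^{1/4}$ forces $\nabla^{1/4}\Lambda_h(x)=Tx$.)
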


\begin{proof}
We want to show that the map $\Phi_{2|\mscr{C}_{\GG}}: \mscr{C}_{\GG} \to \LL^{2}(\GG)$ is nuclear. Recall that it is a composition of two maps: the inclusion $\Lambda_h: \mscr{C}_{\GG} \to \Lambda_h(\mscr{C}_{\GG})$ and $\nabla^{\frac{1}{4}}: \Lambda_h(\mscr{C}_{\GG}) \to \LL^{2}(\GG)$. 

We will first show that $\nabla^{\frac{1}{4}}$ extends to a contraction from $\LL^{2}(\mscr{C}_{\GG}):= \overline{\Lambda_h(\mscr{C}_{\GG})}$ to $\LL^{2}(\GG)$. Note that the characters of irreducible representations are linearly dense in $\LL^{2}(\mscr{C}_{\GG})$ and the characters form an orthonormal set, by the orthogonality relations. Let $x:= \sum_{\alpha} c_{\alpha}\chi_{\alpha}$ be a finite sum of characters of irreducible representations. Note that $\|\Lambda_h(x^{\ast})\|^2 = \|\sum_{\alpha} \overline{c_{\alpha}} \Lambda_h(\chi_{\overline{\alpha}})\|^2 = \sum_{\alpha} |c_{\alpha}|^2 = \|\Lambda_h(x)\|^2$. It follows that 
\begin{align*}
\|\nabla^{\frac{1}{4}} \Lambda_h(x)\|^2 &= \ismaa{\Lambda_h(x)}{\nabla^{\frac{1}{2}} \Lambda_h(x)} \\
&= \ismaa{J\nabla^{\frac{1}{2}} \Lambda_h(x)}{J \Lambda_h(x)} \\
&= \ismaa{\Lambda_h(x^{\ast})}{J \Lambda_h(x)} \\
&\leqslant \|\Lambda_h(x^{\ast})\|\cdot \|\Lambda_h(x)\| = \|\Lambda_h(x)\|^2,
\end{align*}
so $\nabla^{\frac{1}{4}}$ extends to a contraction from $\LL^{2}(\mscr{C}_{\GG})$ to $\LL^2(\GG)$.

We will now show that $\Phi_2$ from $\mscr{C}_{\GG}$ to $\LL^{2}(\GG)$ is a nuclear map. We have $\Phi_2(x) = \nabla^{\frac{1}{4}} \Lambda_h(x)$. As $\Lambda_h(x) \in \LL^{2}(\mscr{C}_{\GG})$, we can write $\Lambda_h(x) = \sum_{\alpha} \ismaa{ \Lambda_{h}(\chi_{\alpha})}{ \Lambda_{h}(x)} \Lambda_h(\chi_{\alpha})$. As $\nabla^{\frac{1}{4}}: \LL^{2}(\mscr{C}_{\GG}) \to \LL^{2}(\GG)$ is bounded, we have
\[
\Phi_2(x) = \sum_{\alpha \in \IrrG} \ismaa{ \Lambda_{h}(\chi_{\alpha})}{ \Lambda_h(x)} \nabla^{\frac{1}{4}}\Lambda_h(\chi_{\alpha}).
\]
If we define functionals $\omega_{\alpha}(x):= \ismaa{\Lambda_{h}(\chi_{\alpha})}{\Lambda_{h}(x)}$ then it suffices to check that $\sum_{\alpha} \|\omega_{\alpha}\|\cdot \|\nabla^{\frac{1}{4}} \Lambda_h(\chi_{\alpha})\| < \infty$. We already know that $\|\nabla^{\frac{1}{4}} \Lambda_h(\chi_{\alpha})\| = \left(\frac{\op{dim}(\alpha)}{\op{dim}_q(\alpha)}\right)^{\frac{1}{2}}$ and it is clear that $\|\omega_{\alpha}\|\leqslant \|\chi_{\alpha}\|_2 = 1$, hence
\[
\sum_{\alpha\in\IrrG} \|\omega_{\alpha}\|\cdot \|\nabla^{\frac{1}{4}} \Lambda_h(\chi_{\alpha})\| \leqslant \sum_{\alpha\in\IrrG} \left(\frac{\op{dim}(\alpha)}{\op{dim}_q(\alpha)}\right)^{\frac{1}{2}} < \infty.
\] 
By Proposition \ref{Prop:nuclear} the inclusion $\mscr{C}_{\GG} \subseteq \LL^{\infty}(\GG)$ is quasi-split.
\end{proof}
This result is already enough to prove that in many cases the radial subalgebra in $\LL^{\infty}(O_{F}^{+})$ is not a MASA; it follows from \cite{Boundary} that $\LL^{\infty}(O_{F}^{+})$ is often a type $\operatorname{III}$ factor and we can use Proposition \ref{Prop:notmasa}. We will generalize this result in the next section (see Corollary \ref{Cor:quasisplit}).

\section{Relative commutant of $\mscr{C}_{\GG}$ and inner scaling automorphisms}\label{sec:3}

We will be interested in the relative commutant $\mscr{C}_{\GG}^{\prime} \cap \LL^{\infty}(\GG)$. If $\mscr{C}_{\GG}$ is commutative, as is the case for example for the free orthogonal quantum groups, then the condition $\mscr{C}_{\GG}^{\prime} \cap \LL^{\infty}(\GG) \subseteq \mscr{C}_{\GG}$ precisely means that $\mscr{C}_{\GG}$ is a MASA in $\LL^{\infty}(\GG)$.\\
Our strategy for proving that $\mscr{C}_{\GG}$ cannot be a MASA in many cases will be the following. We will show that if $\mscr{C}_{\GG}$ was a MASA then $\LL^{\infty}(\GG)$ would have to be a factor. Moreover, if the inclusion $\mscr{C}_{\GG}\subseteq \LL^{\infty}(\GG)$ is quasi-split then it would have to be a type $\operatorname{I}$ factor. We will then use properties of the scaling automorphisms to exclude this case. Let us now move on to more precise statements.
\begin{proposition}\label{Prop:masafactor}
Let $\GG$ be a compact quantum group. Then 
\[
 \overline{\op{span}}^{w^{\ast}}\{\chi_{\alpha}\,|\, \uprho_{\alpha} = \mathds{1}_{\alpha}\} = \mscr{C}_{\GG} \cap \LL^{\infty}(\GG)^{\sigma} = \bigcap_{t\in \mathbb{R}} \sigma_t(\mscr{C}_{\GG}).
\]
If moreover  $\mscr{C}_{\GG}^{\prime}\cap \LL^{\infty}(\GG) \subseteq \mscr{C}_{\GG}$ then
\[
\mathcal{Z}(\LL^{\infty}(\GG)) \subseteq  \overline{\op{span}}^{w^{\ast}}\{\chi_{\alpha}\,|\, \uprho_{\alpha} = \mathds{1}_{\alpha}\}
\]
In particular, if $\uprho_{\alpha} \neq \mathds{1}_{\alpha}$ for all non-trivial irreducible representations of $\GG$ then $\LL^{\infty}(\GG)$ is a factor.
\end{proposition}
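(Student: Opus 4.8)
The plan is to read everything off the Peter--Weyl structure, using that $h|_{\mscr{C}_{\GG}}$ is a faithful normal trace: thus $\mscr{C}_{\GG}$ is finite and $\{\Lambda_h(\chi_\alpha)\}_{\alpha\in\IrrG}$ is an orthonormal basis of $\LL^{2}(\mscr{C}_{\GG}):=\overline{\Lambda_h(\mscr{C}_{\GG})}$ (orthonormality being part of the orthogonality relations, as recorded in the proof of Theorem~\ref{Thm:quasi-split}). Write $\msf{P}:=\overline{\op{span}}^{w^{\ast}}\{\chi_\alpha\mid\uprho_\alpha=\mathds{1}_\alpha\}$. I would obtain the three-fold equality as the chain
\[
\msf{P}\ \subseteq\ \mscr{C}_{\GG}\cap\LL^{\infty}(\GG)^{\sigma}\ \subseteq\ \bigcap_{t\in\RR}\sigma_t(\mscr{C}_{\GG})\ \subseteq\ \msf{P},
\]
the middle inclusion being immediate (if $x\in\mscr{C}_{\GG}$ and $\sigma_t(x)=x$ for all $t$, then $\sigma_{-s}(x)=x\in\mscr{C}_{\GG}$, so $x\in\sigma_s(\mscr{C}_{\GG})$ for every $s$).

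For the first inclusion I would use the formula $\sigma_z(\chi_\alpha)=\sum_k\uprho_{\alpha,k}^{2iz}u_{kk}^\alpha$ recalled before Lemma~\ref{modularcharacter}, together with the linear independence of the $u_{kk}^\alpha$: since $\uprho_\alpha$ is positive with $\op{Tr}(\uprho_\alpha)=\op{Tr}(\uprho_\alpha^{-1})$, it is a scalar exactly when it equals $\mathds{1}_\alpha$, so $\sigma_t(\chi_\alpha)=\chi_\alpha$ for all $t$ if and only if $\uprho_\alpha=\mathds{1}_\alpha$, and then $\chi_\alpha\in\mscr{C}_{\GG}\cap\LL^{\infty}(\GG)^{\sigma}$. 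The condition $\uprho_\alpha=\mathds{1}_\alpha$ is equivalent to $\op{dim}_q(\alpha)=\op{dim}(\alpha)$ and is stable under conjugation ($\chi_\alpha^{\ast}=\chi_{\overline{\alpha}}$) and, by multiplicativity of $\uprho$ under tensor products together with the fusion rules $\chi_\alpha\chi_\beta=\sum_\gamma N_{\alpha\beta}^\gamma\chi_\gamma$, under products; hence $\op{span}\{\chi_\alpha\mid\uprho_\alpha=\mathds{1}_\alpha\}$ is a $\ast$-subalgebra and $\msf{P}$ a von Neumann subalgebra of $\mscr{C}_{\GG}\cap\LL^{\infty}(\GG)^{\sigma}$.

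The third inclusion carries the real content. Put $V_\alpha:=\op{span}\{\Lambda_h(u_{kk}^\alpha)\mid 1\le k\le\op{dim}(\alpha)\}$; by the orthogonality relations these are finite-dimensional, pairwise orthogonal, $\nabla^{it}$-invariant subspaces containing $\Lambda_h(\chi_\alpha)$, and $\nabla^{it}\Lambda_h(\chi_\alpha)=\sum_k\uprho_{\alpha,k}^{2it}\Lambda_h(u_{kk}^\alpha)$ spans a line inside $V_\alpha$. If $x\in\bigcap_t\sigma_t(\mscr{C}_{\GG})$, then $\nabla^{-it}\Lambda_h(x)=\Lambda_h(\sigma_{-t}(x))\in\LL^{2}(\mscr{C}_{\GG})$ for all $t$, so $\Lambda_h(x)\in\bigcap_t\nabla^{it}\LL^{2}(\mscr{C}_{\GG})$. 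Decomposing a vector of $\bigoplus_\alpha V_\alpha$ into its $V_\alpha$-components, this intersection equals $\bigoplus_\alpha\bigl(\bigcap_t\CC\nabla^{it}\Lambda_h(\chi_\alpha)\bigr)$, and the inner intersection is $\CC\Lambda_h(\chi_\alpha)$ if $\uprho_\alpha=\mathds{1}_\alpha$ and $\{0\}$ otherwise: for $\uprho_\alpha\neq\mathds{1}_\alpha$ a common nonzero vector of all the lines $\CC\nabla^{it}\Lambda_h(\chi_\alpha)$ would be, for each $t$, a scalar multiple of $\nabla^{it}\Lambda_h(\chi_\alpha)$, so comparison with $t=0$ would give $\nabla^{it}\Lambda_h(\chi_\alpha)\in\CC\Lambda_h(\chi_\alpha)$ for all $t$, forcing $\uprho_{\alpha,k}^{2it}$ to be independent of $k$ (linear independence of the $\Lambda_h(u_{kk}^\alpha)$), hence all eigenvalues of $\uprho_\alpha$ equal, hence $\uprho_\alpha=\mathds{1}_\alpha$. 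Thus $\Lambda_h(x)$ lies in $\overline{\op{span}}^{\|\cdot\|_2}\{\Lambda_h(\chi_\alpha)\mid\uprho_\alpha=\mathds{1}_\alpha\}$, the $\LL^{2}$-closure of $\Lambda_h(\msf{P})$; since $\mscr{C}_{\GG}$ is finite, a von Neumann subalgebra is recovered from the $\|\cdot\|_2$-closure of its $\Lambda_h$-image (the range of the orthogonal projection implementing the trace-preserving conditional expectation), so $x\in\msf{P}$. This ``rotating-line'' step is the only place where the argument is not bookkeeping; one must resist identifying $\bigcap_t\sigma_t(\mscr{C}_{\GG})$ with $\mscr{C}_{\GG}$, precisely because $\sigma_t$ fails to preserve $\mscr{C}_{\GG}$ away from the Kac case.

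For the ``moreover'' part: since $\mscr{C}_{\GG}\subseteq\LL^{\infty}(\GG)$ we have $\LL^{\infty}(\GG)'\subseteq\mscr{C}_{\GG}'$, so $\mathcal{Z}(\LL^{\infty}(\GG))=\LL^{\infty}(\GG)\cap\LL^{\infty}(\GG)'\subseteq\mscr{C}_{\GG}'\cap\LL^{\infty}(\GG)\subseteq\mscr{C}_{\GG}$ by hypothesis; and central elements commute with everything, hence lie in the centralizer of $h$, i.e.\ in $\LL^{\infty}(\GG)^{\sigma}$. Combined with the equality just proved, $\mathcal{Z}(\LL^{\infty}(\GG))\subseteq\mscr{C}_{\GG}\cap\LL^{\infty}(\GG)^{\sigma}=\msf{P}$. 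Finally, if $\uprho_\alpha\neq\mathds{1}_\alpha$ for every non-trivial irreducible representation, then the only character in $\msf{P}$ is that of the trivial representation, which is the unit $1$, so $\msf{P}=\CC 1$ and $\LL^{\infty}(\GG)$ is a factor.
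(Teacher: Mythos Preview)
Your proof is correct and follows essentially the same route as the paper's. Both establish the chain $\msf{P}\subseteq\mscr{C}_{\GG}\cap\LL^{\infty}(\GG)^{\sigma}\subseteq\bigcap_t\sigma_t(\mscr{C}_{\GG})\subseteq\msf{P}$, with the substantive step being the last inclusion; there the paper argues directly that a nonzero Fourier coefficient $c_\alpha$ forces $\chi_\alpha$ to be an eigenvector of the modular group (hence $\uprho_\alpha=\mathds{1}_\alpha$), which is exactly your rotating-line argument phrased in the $V_\alpha$-decomposition. For the passage from $\Lambda_h(x)\in\overline{\Lambda_h(\msf{P})}$ to $x\in\msf{P}$ the paper invokes either Takesaki's conditional expectation (since $\msf{P}$ sits in the centraliser) or Lemma~\ref{lemma4.2.2}, while you use the trace-preserving conditional expectation inside the finite algebra $\mscr{C}_{\GG}$; these are equivalent here because $x\in\mscr{C}_{\GG}$ and $h|_{\mscr{C}_{\GG}}$ is tracial. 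Your extra care in checking that $\msf{P}$ is a $*$-subalgebra (via $\uprho_{\overline{\alpha}}$ and multiplicativity of $\uprho$ on tensor products) is a welcome detail the paper leaves implicit.
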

\begin{proof}
It is clear that $\overline{\op{span}}^{w^{\ast}}\{\chi_{\alpha} \,|\, \uprho_{\alpha} = \mathds{1}_{\alpha}\} \subseteq \mscr{C}_{\GG} \cap \LL^{\infty}(\GG)^{\sigma} \subseteq \bigcap_{t\in\mathbb{R}} \sigma_t(\mscr{C}_{\GG})$. Now let $x\in \bigcap_{t\in\mathbb{R}} \sigma_t(\mscr{C}_{\GG})$; we can write $\Lambda_h(\sigma_t(x)) = \sum_{\alpha} c_{\alpha} \Lambda_h(\sigma_t(\chi_{\alpha})) = \sum_{\alpha} c_{\alpha, t} \Lambda_h(\chi_{\alpha})$. It follows from the orthogonality relations that the elements $\sigma_t(\chi_{\alpha})$ and $\chi_{\beta}$ are orthogonal unless $\alpha=\beta$. Therefore we get $c_{\alpha}\sigma_t(\chi_{\alpha}) = c_{\alpha,t} \chi_{\alpha}$ for all $t$, hence either $c_{\alpha} = c_{\alpha,t} = 0$ or $\chi_{\alpha}$ is an eigenvector of the modular group. If $\alpha$ is an irreducible representation on the Hilbert space $\mathsf{H}_\alpha$ and we pick an orthonormal basis $(e_k)_k$ in which the matrix $\uprho_{\alpha}$ is diagonal then $\sigma_z(\chi_{\alpha}) = \sum_{k} \uprho_{\alpha,k}^{2iz} u^{\alpha}_{k,k}$, where $\chi_{\alpha} = \sum_{k} u^{\alpha}_{k,k}$. Thus $\chi_{\alpha}$ is an eigenvector of the modular group iff the matrix $\uprho_{\alpha}$ is a multiple of the identity, which actually forces $\uprho_{\alpha} = \mathds{1}$ due to the condition $\Tr(\uprho_{\alpha}) = \Tr(\uprho_{\alpha}^{-1})$. It follows that if a character of an irreducible representation is an eigenvector of the modular group then it is in fact a fixed point. 

Denote $A:= \overline{\op{span}}^{w^{\ast}}\{\chi_{\alpha} \,|\, \uprho_{\alpha} = \mathds{1}_{\alpha}\}$. We showed that any element $x\in \bigcap_{t\in\mathbb{R}} \sigma_t(\mscr{C}_{\GG})$  satisfies $\Lambda_h(x) \in \overline{\Lambda_h(A)}$. Since $A$ is contained in the centralizer, there exists a normal, faithful, state-preserving conditional expectation onto it, and it is easy to conclude that it implies $x\in A$. Alternatively one can invoke Lemma \ref{lemma4.2.2} because the Haar integral on the algebra $A$ is tracial. 

To finish the proof, note that the condition $\mscr{C}_{\GG}^{\prime} \cap \LL^{\infty}(\GG)\subseteq \mscr{C}_{\GG}$ implies that $\mathcal{Z}(\LL^{\infty}(\GG)) \subseteq \mscr{C}_{\GG}$. Moreover the center is always contained in the centralizer, so we obtain $\mathcal{Z}(\LL^{\infty}(\GG)) \subseteq \mscr{C}_{\GG} \cap \LL^{\infty}(\GG)^{\sigma}$.
\end{proof}
The next technical ingredient, featuring the scaling group, is the following proposition.
\begin{proposition}\label{Prop:scaling}
Let $\GG$ be a compact quantum group and let $t\in \RR$. Suppose that $\mscr{C}_{\GG}^{\prime} \cap \LL^{\infty}(\GG) \subseteq \mscr{C}_{\GG}$, the scaling automorphism $\tau_t$ is inner and is implemented by $v \in \LL^{\infty}(\GG)$. Then $v\in \overline{\op{span}}^{w^{\ast}}\{\chi_{\alpha}\,|\, \uprho_{\alpha} = \mathds{1}_{\alpha}\}$.
\end{proposition}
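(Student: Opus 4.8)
The plan is to exploit the way the scaling group interacts with characters. First I would recall that the scaling group $(\tau_t)_{t\in\RR}$ acts on the matrix coefficients of an irreducible representation $\alpha$ in a controlled way: in an orthonormal basis diagonalising $\uprho_\alpha$ one has $\tau_t(u^\alpha_{kl}) = \uprho_{\alpha,k}^{it}\uprho_{\alpha,l}^{-it}u^\alpha_{kl}$, so that $\tau_t$ fixes the character $\chi_\alpha = \sum_k u^\alpha_{kk}$ for \emph{every} $\alpha$. Hence $\mscr{C}_\GG \subseteq \LL^\infty(\GG)^\tau$ and, more importantly, $\tau_t|_{\mscr{C}_\GG} = \mathrm{id}$. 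Consequently, if $\tau_t$ is implemented by a unitary $v\in\LL^\infty(\GG)$, then $v$ commutes with every element of $\mscr{C}_\GG$, i.e. $v\in \mscr{C}_\GG' \cap \LL^\infty(\GG)$.

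Now I invoke the standing hypothesis $\mscr{C}_\GG' \cap \LL^\infty(\GG) \subseteq \mscr{C}_\GG$, which immediately gives $v\in\mscr{C}_\GG$. It remains to upgrade this to membership in the smaller algebra $A := \overline{\op{span}}^{w^*}\{\chi_\alpha \,|\, \uprho_\alpha = \mathds{1}_\alpha\}$, and by Proposition \ref{Prop:masafactor} this algebra equals $\bigcap_{t\in\RR}\sigma_t(\mscr{C}_\GG) = \mscr{C}_\GG \cap \LL^\infty(\GG)^\sigma$. So the task reduces to showing that $v$ is fixed by the modular group of the Haar state. Here I would use the known commutation relation between the scaling group and the modular group (they commute, being part of the Woronowicz data), together with the fact that $v$ implements $\tau_t$: for any $x$, $\sigma_s(v)\,\sigma_s(x)\,\sigma_s(v)^* = \sigma_s(\tau_t(x)) = \tau_t(\sigma_s(x))$, so $\sigma_s(v)$ also implements $\tau_t$, meaning $\sigma_s(v)^* v$ lies in the centre of $\LL^\infty(\GG)$. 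Writing $\Lambda_h(v) = \sum_\alpha c_\alpha \Lambda_h(\chi_\alpha)$ (which makes sense since $v\in\mscr{C}_\GG$ and the characters are $\LL^2$-orthonormal), the argument in the proof of Proposition \ref{Prop:masafactor} shows that each $\chi_\alpha$ appearing with $c_\alpha\neq 0$ must be an eigenvector of $\sigma_s$ up to a central correction; using the normalisation $\Tr(\uprho_\alpha) = \Tr(\uprho_\alpha^{-1})$ one concludes $\uprho_\alpha = \mathds{1}_\alpha$, so $v\in A$.

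Alternatively, and perhaps more cleanly, once $v\in\mscr{C}_\GG$ I would argue directly that $v\in\bigcap_{t}\sigma_t(\mscr{C}_\GG)$: since the scaling and modular automorphism groups commute and $\tau$ is the identity on $\mscr{C}_\GG$, the element $\sigma_s(v)$ still implements $\tau_t$ and still lies (after applying $\sigma_s$ to the inclusion $\mscr{C}_\GG' \cap \LL^\infty(\GG)\subseteq\mscr{C}_\GG$, which is $\sigma$-invariant because $\mscr{C}_\GG$ is not, but $\sigma_s(\mscr{C}_\GG)$ is a MASA-type algebra containing the same center)\dots{} this needs care, so I would instead simply note $\sigma_s(v)$ implements $\tau_t$ hence commutes with $\mscr{C}_\GG$, giving $\sigma_s(v)\in\mscr{C}_\GG$ for all $s$, i.e. $v\in\bigcap_s \sigma_{-s}(\mscr{C}_\GG) = A$ by Proposition \ref{Prop:masafactor}. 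The main obstacle I anticipate is precisely this last point: verifying that $\sigma_s(v)$ still commutes with all of $\mscr{C}_\GG$. This hinges on $\tau_s$ and $\sigma_t$ commuting and on $\tau$ acting trivially on characters; both are standard facts about compact quantum groups, so once they are in place the proof is short. I would therefore structure the write-up as: (1) $\tau$ fixes every character, hence $\tau_t|_{\mscr{C}_\GG}=\mathrm{id}$; (2) $v$ implements $\tau_t$ $\Rightarrow$ $v\in\mscr{C}_\GG' \cap\LL^\infty(\GG)\subseteq\mscr{C}_\GG$; (3) $\sigma$-$\tau$ commutation $\Rightarrow$ $\sigma_s(v)$ also commutes with $\mscr{C}_\GG$, so $v\in\bigcap_s\sigma_s(\mscr{C}_\GG)$; (4) conclude via Proposition \ref{Prop:masafactor}.
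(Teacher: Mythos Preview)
Your final structured argument (1)--(4) is correct and follows essentially the same path as the paper: show $v\in\mscr{C}_\GG$ using that $\tau_t$ fixes characters, then show $\sigma_s(v)\in\mscr{C}_\GG$ for all $s$ via the commutation of $\sigma$ and $\tau$, and conclude with Proposition~\ref{Prop:masafactor}. The only difference is in how step~(3) is justified: the paper computes $\sigma_s(v^*)v\in\mathcal{Z}(\LL^\infty(\GG))$, writes $\sigma_s(v)=vw_s$ with $w_s$ central, and then uses $\mathcal{Z}(\LL^\infty(\GG))\subseteq\mscr{C}_\GG$ (which follows from the hypothesis) to get $\sigma_s(v)\in\mscr{C}_\GG$; you instead observe directly that $\sigma_s(v)$ again implements $\tau_t$, hence commutes with every character, hence lies in $\mscr{C}_\GG'\cap\LL^\infty(\GG)\subseteq\mscr{C}_\GG$ by a second application of the standing hypothesis. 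Your route is marginally shorter and avoids the detour through the centre; the paper's route has the small side benefit of making the cocycle $s\mapsto w_s$ explicit, though it is not used later. Your earlier, abandoned attempt (trying to read off the condition $\uprho_\alpha=\mathds{1}_\alpha$ directly from the expansion $\Lambda_h(v)=\sum_\alpha c_\alpha\Lambda_h(\chi_\alpha)$ using only that $\sigma_s(v)^*v$ is central) was indeed murkier and is rightly discarded in favour of the clean version.
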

\begin{proof}
The scaling group acts trivially on characters, so we have $\chi_{\alpha} = \tau_t(\chi_{\alpha}) = v\chi_{\alpha} v^{\ast}$ for any $\alpha \in \Irr(\GG)$. Therefore $v \in \mscr{C}_{\GG}^{\prime} \cap \LL^{\infty}(\GG) \subseteq \mscr{C}_{\GG}$. Because of that we can write $\Lambda_h(v) = \sum_{\alpha} c_{\alpha} \Lambda_h(\chi_{\alpha})$.

We will now use the fact that the scaling group and the modular group commute, so for any $x\in \LL^{\infty}(\GG)$ we can write
\[
v x v^{\ast} = \tau_t(x) = \sigma_s \tau_t(\sigma_{-s}(x)) = \sigma_s(v \sigma_{-s}(x) v^{\ast}) = \sigma_s(v) x \sigma_{s}(v^{\ast}).
\]
It follows that $\sigma_s(v^{\ast})v x = x \sigma_s(v^{\ast}) v$, hence $\sigma_s(v^{\ast}) v \in \mathcal{Z}(\LL^{\infty}(\GG))$ for every $s\in \mathbb{R}$. We can write $\sigma_s(v) = v w_s$ with $w_s = v^{\ast} \sigma_s(v) \in \mathcal{Z}(\LL^{\infty}(\GG))$. As $\mathcal{Z}(\LL^{\infty}(\GG)) \subseteq \mscr{C}_{\GG}$, we have $vw_s \in \mscr{C}_{\GG}$. It follows that $\sigma_s(v) \in \mscr{C}_{\GG}$ for any $s\in \mathbb{R}$, i.e. $v \in \bigcap_{t\in\mathbb{R}} \sigma_t(\mscr{C}_{\GG})$; Proposition \ref{Prop:masafactor} allows us to conclude.
\end{proof}
Before we state and prove the main result of this section, we need to record another useful consequence of having a quasi-split inclusion.
\begin{lemma}[{\cite[Remark 3.10 (2)]{Bikram}}]\label{Lem:typeI}
Suppose that the inclusion $\msf{B} \subseteq \msf{M}$ is quasi-split and $\msf{B}$ is a MASA in $\msf{M}$. Then $\msf{M}$ is a direct sum of type $\operatorname{I}$ factors. 
\end{lemma}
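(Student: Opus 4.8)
The plan is to extract everything from the single surjection supplied by the quasi-split hypothesis, after first using the MASA property to identify its target and then running a disintegration over the MASA. Write the inclusion as $\msf{B}\subseteq\msf{M}$ with $\msf{B}$ abelian and $\msf{B}'\cap\msf{M}=\msf{B}$, and let $J$ be the modular conjugation on $\LL^{2}(\msf{M})$. First I would compute commutants: since $(\msf{B}\vee\msf{M}')'=\msf{B}'\cap\msf{M}=\msf{B}$, taking commutants gives $\msf{B}\vee\msf{M}'=\msf{B}'$ (the commutant of $\msf{B}$ in $\B(\LL^{2}(\msf{M}))$), whence $\mc{Z}(\msf{B}')=\msf{B}'\cap\msf{B}=\msf{B}$; moreover $\mc{Z}(\msf{M})=\mc{Z}(\msf{M}')\subseteq\msf{B}'\cap\msf{M}=\msf{B}$. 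Thus the quasi-split map is a surjective normal $\ast$-homomorphism $\pi\colon\msf{B}\overline{\otimes}\msf{M}^{op}\twoheadrightarrow\msf{B}'$, $\pi(x\otimes y^{op})=xJy^{\ast}J$, whose kernel is $(\msf{B}\overline{\otimes}\msf{M}^{op})(1-z)$ for a central projection $z\in\mc{Z}(\msf{B}\overline{\otimes}\msf{M}^{op})=\msf{B}\overline{\otimes}\mc{Z}(\msf{M})$, and which restricts to an isomorphism $(\msf{B}\overline{\otimes}\msf{M}^{op})z\cong\msf{B}'$. I note at once that $\msf{B}'$ is of type $\op{I}$, since its commutant $\msf{B}$ is abelian.

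The heart of the argument is to show that $\mc{Z}(\msf{M})$ is \emph{atomic}. Identify $\msf{B}=\LL^{\infty}(X,\mu)$ and $\mc{Z}(\msf{M})=\LL^{\infty}(Y,\nu)$, so that the inclusion $\mc{Z}(\msf{M})\subseteq\msf{B}$ is pullback along a measurable map $p\colon X\to Y$, and $z=\I_{E}$ for a measurable $E\subseteq X\times Y$. The key observation is that $\mc{Z}(\msf{M})$ embeds into the centre $\msf{B}\overline{\otimes}\mc{Z}(\msf{M})$ of the domain in two ways, $a\mapsto a\otimes 1$ and $a\mapsto 1\otimes a^{op}$, and $\pi$ carries both into $\mc{Z}(\msf{M})\subseteq\msf{B}$, via the identity and via the automorphism $\kappa:=J(\cdot)^{\ast}J$ of $\mc{Z}(\msf{M})$ respectively. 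Since these two maps have the same image, they agree after $\pi$, which forces $(\kappa(a)\otimes 1)z=(1\otimes a^{op})z$ for all $a$; in measure-theoretic terms this says that $E$ is concentrated on the graph of the measurable map $X\to Y$ underlying the inclusion $\mc{Z}(\msf{M})\subseteq\msf{B}$ (relabelled by $\kappa$). On the other hand $\pi$ restricted to $1\otimes\msf{M}^{op}$ is the standard anti-isomorphism onto $\msf{M}'$, hence injective, and this injectivity translates into $\mu(E_{y})>0$ for $\nu$-almost every $y$. Combining the two facts, $\nu$-almost every fibre $p^{-1}(y)$ has positive $\mu$-measure; as these fibres are disjoint and $\msf{M}$ is countably decomposable in all cases of interest, only countably many of them can have positive measure, so $\nu$ is purely atomic.

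Once $\mc{Z}(\msf{M})$ is atomic I would write $\msf{M}=\bigoplus_{y}\msf{M}_{y}$ with each $\msf{M}_{y}$ a factor, so that $(\msf{B}\overline{\otimes}\msf{M}^{op})z=\bigoplus_{y}\LL^{\infty}(E_{y})\overline{\otimes}\msf{M}^{op}_{y}$ with every $E_{y}$ of positive measure. Reading off the type in each central summand of the isomorphism $(\msf{B}\overline{\otimes}\msf{M}^{op})z\cong\msf{B}'$, and using that $\msf{B}'$ is of type $\op{I}$ while tensoring by an abelian algebra preserves type, each $\msf{M}_{y}$ is forced to be a type $\op{I}$ factor; hence $\msf{M}=\bigoplus_{y}\msf{M}_{y}$ is a direct sum of type $\op{I}$ factors, as claimed. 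I expect the main obstacle to be the middle step: making rigorous that the compatibility of the two central embeddings confines the support of $z$ to a graph over the atoms of $\mc{Z}(\msf{M})$, and then extracting atomicity from the positivity of the fibres. This is the only genuinely non-formal part; the remainder is bookkeeping with commutants, central quotients, and the disintegration machinery, which is available since the relevant algebras have separable predual in our setting.
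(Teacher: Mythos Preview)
The paper does not supply its own proof of this lemma; it is simply quoted from \cite[Remark 3.10 (2)]{Bikram}. So there is nothing to compare your argument against directly.

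That said, your outline is essentially correct and self-contained. A few comments:

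\begin{itemize}
\item The step ``since these two maps have the same image, they agree after $\pi$'' is phrased misleadingly. Having the same image does not imply agreement. What you actually use is the explicit computation: $\pi(a\otimes 1)=a$ and $\pi(1\otimes a^{op})=Ja^{\ast}J$, and for $a\in\mc{Z}(\msf{M})$ in standard form one has $Ja^{\ast}J=a$, so $\kappa=\operatorname{id}$. This immediately gives $(a\otimes 1-1\otimes a^{op})\in\ker\pi$, hence $(a\otimes 1)z=(1\otimes a^{op})z$. You should state this directly rather than invoking ``same image''.
\item Your atomicity argument hinges on the fact that a countably decomposable measure can support only countably many disjoint sets of positive measure. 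You flag this yourself (``in all cases of interest''), and indeed the paper only applies the lemma to $\LL^{\infty}(\GG)$ for a compact quantum group, where one has a faithful normal state. But the lemma as stated carries no such hypothesis, so strictly speaking you are proving a slightly weaker statement. Realising the inclusion $\mc{Z}(\msf{M})\subseteq\msf{B}$ as pullback along a measurable $p\colon X\to Y$ also needs a standardness assumption on the measure spaces, which again is available in the separable-predual setting.
\item Once atomicity is in hand, your type computation is clean: each summand $\LL^{\infty}(E_y)\ov{\otimes}\msf{M}_y^{op}$ embeds as a direct summand of the type $\operatorname{I}$ algebra $\msf{B}'$, and since $\mu(E_y)>0$ and tensoring with a nonzero abelian algebra preserves type, $\msf{M}_y$ must be type $\operatorname{I}$.
\end{itemize}

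In short: the argument is sound under a separability/$\sigma$-finiteness hypothesis, which covers all uses in the paper; just tighten the justification of the key identity $(a\otimes 1)z=(1\otimes a^{op})z$.
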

\begin{theorem}\label{Thm:nonmasa}
Suppose that $\GG$ is a non-trivial compact quantum group such that the inclusion $\mscr{C}_{\GG}\subseteq \LL^{\infty}(\GG)$ is quasi-split and $\uprho_{\alpha} \neq \mathds{1}_{\alpha}$ for any non-trivial $\alpha \in \Irr(\GG)$. Then $\mscr{C}_{\GG}$ is not a MASA in $\LL^{\infty}(\GG)$.
\end{theorem}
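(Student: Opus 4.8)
The plan is to argue by contradiction, so assume that $\mscr{C}_{\GG}$ is a MASA in $\LL^{\infty}(\GG)$. First I would observe that under our hypothesis the trivial representation is the unique irreducible $\alpha$ with $\uprho_{\alpha}=\mathds{1}_{\alpha}$, so $\overline{\op{span}}^{w^{\ast}}\{\chi_{\alpha}\,|\,\uprho_{\alpha}=\mathds{1}_{\alpha}\}=\CC\mathds{1}$. Proposition \ref{Prop:masafactor} -- whose extra hypothesis $\mscr{C}_{\GG}^{\prime}\cap\LL^{\infty}(\GG)\subseteq\mscr{C}_{\GG}$ holds since $\mscr{C}_{\GG}$ is assumed to be a MASA -- then forces $\LL^{\infty}(\GG)$ to be a factor. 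On the other hand, since the inclusion $\mscr{C}_{\GG}\subseteq\LL^{\infty}(\GG)$ is quasi-split and $\mscr{C}_{\GG}$ is a MASA, Lemma \ref{Lem:typeI} shows that $\LL^{\infty}(\GG)$ is a direct sum of type $\operatorname{I}$ factors. Being a factor, it is therefore a single type $\operatorname{I}$ factor, i.e.~$\LL^{\infty}(\GG)\cong\B(\msf{H})$ for some Hilbert space $\msf{H}$ (possibly finite dimensional).

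Next I would exploit the classical fact that every $\ast$-automorphism of a type $\operatorname{I}$ factor is inner -- spatial in the case of $\B(\msf{H})$, and a consequence of the Skolem--Noether theorem when $\msf{H}$ is finite dimensional. In particular every scaling automorphism $\tau_{t}$ of $\LL^{\infty}(\GG)$ is implemented by a unitary $v_{t}\in\LL^{\infty}(\GG)$. Applying Proposition \ref{Prop:scaling} (once more using that $\mscr{C}_{\GG}^{\prime}\cap\LL^{\infty}(\GG)\subseteq\mscr{C}_{\GG}$, which is part of being a MASA) we get $v_{t}\in\overline{\op{span}}^{w^{\ast}}\{\chi_{\alpha}\,|\,\uprho_{\alpha}=\mathds{1}_{\alpha}\}=\CC\mathds{1}$, and hence $\tau_{t}=\id$ for all $t\in\RR$; that is, the scaling group of $\GG$ is trivial.

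This is the desired contradiction. Since $\GG$ is non-trivial there is a non-trivial irreducible representation $\alpha$, and by hypothesis $\uprho_{\alpha}\neq\mathds{1}_{\alpha}$. The normalisation $\Tr(\uprho_{\alpha})=\Tr(\uprho_{\alpha}^{-1})$ prevents $\uprho_{\alpha}$ from being a scalar multiple of the identity (a positive scalar $c$ with $c\dim(\alpha)=c^{-1}\dim(\alpha)$ must equal $1$), so in an orthonormal basis of $\msf{H}_{\alpha}$ diagonalising $\uprho_{\alpha}$ there are indices $k,l$ with $\uprho_{\alpha,k}\neq\uprho_{\alpha,l}$; then, by the formula for the scaling group on matrix coefficients (analogous to the one for the modular group recalled before Lemma \ref{modularcharacter}), $\tau_{t}(u^{\alpha}_{kl})=\uprho_{\alpha,k}^{it}\uprho_{\alpha,l}^{-it}u^{\alpha}_{kl}\neq u^{\alpha}_{kl}$ for suitable $t$, contradicting $\tau_{t}=\id$. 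Equivalently: $\GG$ is not of Kac type and therefore its scaling group is non-trivial. Hence $\mscr{C}_{\GG}$ cannot be a MASA in $\LL^{\infty}(\GG)$.

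The genuinely non-formal content here is all imported: Propositions \ref{Prop:masafactor} and \ref{Prop:scaling} supply the quantum-group input and Lemma \ref{Lem:typeI} supplies the operator-algebraic input, so the only new ingredient needed is the soft step ``type $\operatorname{I}$ factor $\Rightarrow$ every automorphism is inner''. Accordingly there is no serious obstacle; the one point that requires a little care is bookkeeping -- checking that ``$\mscr{C}_{\GG}$ is a MASA'' delivers exactly the inclusion $\mscr{C}_{\GG}^{\prime}\cap\LL^{\infty}(\GG)\subseteq\mscr{C}_{\GG}$ demanded by Propositions \ref{Prop:masafactor} and \ref{Prop:scaling}, and that under our standing assumption the reduced weak$^{\ast}$-closed span of characters collapses to $\CC\mathds{1}$, which makes both the ``factor'' conclusion and the ``$\tau_{t}=\id$'' conclusion go through.
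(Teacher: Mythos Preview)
Your argument is correct and follows essentially the same route as the paper's own proof: assume MASA, use Proposition~\ref{Prop:masafactor} and Lemma~\ref{Lem:typeI} to force $\LL^{\infty}(\GG)\cong\B(\msf{H})$, then use innerness of automorphisms of $\B(\msf{H})$ together with Proposition~\ref{Prop:scaling} to conclude that the scaling group is trivial, contradicting the non-Kac assumption. The only cosmetic difference is the order in which you invoke the two results and the extra detail you supply on why the scaling group is non-trivial.
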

\begin{proof}
Suppose that $\mscr{C}_{\GG}$ is a MASA in $\LL^{\infty}(\GG)$. It follows from Lemma \ref{Lem:typeI} that $\LL^{\infty}(\GG)$ is a direct sum of type $\operatorname{I}$ factors. Moreover, it follows from Proposition \ref{Prop:masafactor} that $\LL^{\infty}(\GG)$ is a factor, so $\LL^{\infty}(\GG)\simeq \op{B}(\msf{H})$ for some Hilbert space $\msf{H}$. 

By our assumptions there exists an irreducible representation $\alpha$ with $\uprho_{\alpha}\neq \mathds{1}_{\alpha}$, thus $\GG$ is not of Kac type, so there exists a non-trivial scaling automorphism $\tau_t$. All automorphisms of $\op{B}(\msf{H})$ are inner, so there exists $v\in \LL^{\infty}(\GG)$ implementing it. By Proposition \ref{Prop:scaling} $v \in \overline{\op{span}}^{w^{\ast}}\{\chi_{\alpha}\,|\, \uprho_{\alpha} = \mathds{1}_{\alpha}\} = \CC \mathds{1}$. But that means that $\tau_t$ is a trivial automorphism and this leads us to the desired contradiction.
\end{proof}
We will provide examples to which this result can be applied in Subsection \ref{subsec:abelian} (see Corollary \ref{Cor:quasisplit}).

\begin{remark}
In the proof of Theorem \ref{Thm:nonmasa} we argued using Proposition \ref{Prop:scaling} that if $\mscr{C}_{\GG}$ is a MASA in $\LL^{\infty}(\GG)$, then $\LL^{\infty}(\GG)$ is not isomorphic to $\B(\msf{H})$. In fact, one can show that $\LL^{\infty}(\GG)$ is not isomorphic to $\B(\msf{H})$ for any non-trivial compact quantum group $\GG$, see \cite{KrajczokSoltan}.
\end{remark}
\section{Examples}\label{sec:examples}

\subsection{Properties of $\SU_q(2)$}

Fix $q\in (-1,1)\setminus\{0\}$. In this subsection we establish a number of properties of $\mscr{C}_{\SU_q(2)}$: we show that it is not a MASA in $\LL^{\infty}(\SU_q(2))$ (Proposition \ref{prop4.1.1}), however it is a MASA in $\LL^{\infty}(\SU_q(2))^{\tau}$, the fixed point subalgebra of the scaling group (Proposition \ref{prop4.1.2}). This property will be used in the next subsection, where we construct a new compact quantum group out of $\SU_q(2)$ and $\QQ$, using a bicrossed product construction.\\

Let us start with recalling a description of $\SU_q(2)$ from the dual perspective. Since $\mathrm{C}(\SU_q(2))$ is a type $\operatorname{I}$, separable $\cst$-algebra \cite[Theorem A.2.3]{Woronowiczsu2}, $\widehat{\SU_q(2)}$ is a type $\operatorname{I}$ (second countable) discrete quantum group and we can use Desmedt's theorem to describe its structure using decompositions into direct integrals (see \cite{Desmedt} and \cite[Example 7.1]{KrajczokModular}). For each $\lambda\in \TT$ there is an irreducible representation
\[
\psi_{\lambda}\colon \mathrm{C}(\SU_q(2))\rightarrow \B(\msf{H}_\lambda)=\B(\ell^2(\ZZ_+)),
\]
given by
\[
\psi_\lambda(\alpha)\phi_k=\sqrt{1-q^{2k}}\phi_{k-1},\quad
\psi_{\lambda} (\gamma)\phi_k = \lambda q^k \phi_k\quad(k\in\ZZ_+)
\]
($\{\phi_{k}\}_{k\in\ZZ_+}$ is the standard orthonormal basis in $\ell^2(\ZZ_+),\, \phi_{-1}=0$ and $\alpha,\gamma$ are the standard generators of $\mathrm{C}(\SU_q(2))$). Denote by $\mu$ the normalized Lebesgue measure on the unit circle $\TT$.
Desmedt's theorem provides us with a unitary operator $\mc{Q}_L\colon \LL^2(\SU_q(2))\rightarrow \int_{\TT}^{\oplus} \HS(\msf{H}_\lambda)\md\mu(\lambda)$ given by
\[
\mc{Q}_L\colon \Lambda_h(a)\mapsto \int_{\TT}^{\oplus} \psi_\lambda(a) D_\lambda^{-1}\md\mu(\lambda)\quad(a\in \mathrm{C}(\SU_q(2)))
\]
where $D_\lambda^{-1}$ is a positive Hilbert-Schmidt operator
\[
D_\lambda^{-1}= \sqrt{1-q^2} \operatorname{diag}(1, |q|,|q|^{2},\dotsc)
\]
written with respect to the basis $\{\phi_k\}_{k\in\ZZ_+}$. Furthermore, we have
\begin{equation}\label{eq4.1.1}
\mc{Q}_L \LL^{\infty}(\SU_q(2)) \mc{Q}_L^*=
\int_{\TT}^{\oplus} \B(\msf{H}_\lambda)\otimes \I_{\ov{\msf{H}_\lambda}}
\,\md\mu(\lambda).
\end{equation}

The next lemma says that the von Neumann algebra generated by the real part of a weighted shift is MASA in $\B(\ell^2(\ZZ_+))$.

\begin{lemma}\label{lemma4.1.1}
Let $s\in \B(\ell^2(\ZZ_+))$ be the shift operator given by $s\phi_k=\phi_{k-1}\,(k\in\ZZ_+)$, and let $\M_f\in \B(\ell^2(\ZZ_+))$ be the multiplication operator associated with a function $f\in \ell^{\infty}(\ZZ_+)$. If $f(\NN)\subseteq \RR_{>0}$ then the von Neumann algebra $\mscr{B}$ generated by $T=s\M_f+\M_f s^*$ is maximal abelian in $\B(\ell^2(\ZZ_+))$.
\end{lemma}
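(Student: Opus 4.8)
The plan is to show that the von Neumann algebra $\mscr{B}$ generated by the self-adjoint operator $T = s\M_f + \M_f s^*$ is maximal abelian in $\B(\ell^2(\ZZ_+))$ by checking that its commutant $\mscr{B}'$ is contained in $\mscr{B}$; since $T$ is self-adjoint, $\mscr{B}$ is abelian, so this is equivalent to maximal abelianness. Concretely, I would pick an arbitrary $X \in \{T\}' = \mscr{B}'$ and prove two things: first that $X$ is a band-limited operator (indeed diagonal plus off-diagonal terms), and then that the coefficients are forced by commutation with $T$ to be a fixed multiple of a polynomial in $T$, so that $X$ lies in $\mscr{B}$.

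First I would recast $T$ in matrix form with respect to $\{\phi_k\}$: writing $a_k := f(k) > 0$, one has $T\phi_k = a_{k+1}\phi_{k+1}^{\,} $ wait — more carefully, $s\M_f \phi_k = a_k s\phi_k = a_k \phi_{k-1}$ and $\M_f s^* \phi_k = \M_f \phi_{k+1} = a_{k+1}\phi_{k+1}$, so $T$ is a real Jacobi (tridiagonal) matrix with zero diagonal and positive off-diagonal entries $a_{k+1}$ connecting $\phi_k$ and $\phi_{k+1}$. The key elementary fact is that $\phi_0$ is a cyclic vector for $\mscr{B}$: since $T$ is tridiagonal with all off-diagonal entries nonzero, the vectors $\phi_0, T\phi_0, T^2\phi_0, \dots$ span a dense subspace (indeed $T^n\phi_0$ has a nonzero $\phi_n$-component and lies in $\lin\{\phi_0,\dots,\phi_n\}$, so by induction $\lin\{\phi_0,\dots,\phi_n\} = \lin\{\phi_0, T\phi_0,\dots, T^n\phi_0\}$). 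Hence $\phi_0$ is cyclic for the abelian algebra $\mscr{B}$, which means $\phi_0$ is automatically separating for $\mscr{B}' $.

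Now let $X \in \mscr{B}'$. Since $X\phi_0 \in \ell^2(\ZZ_+)$ and $\phi_0$ is cyclic for the \emph{abelian} $\mscr{B}$, there is a standard argument: choose a sequence of polynomials $p_n$ with $p_n(T)\phi_0 \to X\phi_0$, and show $X = \lim p_n(T)$ strongly. The cleanest route is the classical lemma that for an abelian von Neumann algebra $\mscr{B}$ with cyclic vector $\xi$, one has $\mscr{B}' = \mscr{B}$ — i.e.\ a cyclic vector for an abelian algebra is automatically separating and the algebra is maximal abelian. I would either invoke this directly or reprove it in two lines: if $X \in \mscr{B}'$ then $X\phi_0 = y\phi_0$ for some $y$ in the strong closure of polynomials in $T$ (using cyclicity to approximate the vector $X\phi_0$, then upgrading to operator convergence using that $p_n(T) - p_m(T)$ acts as multiplication-type operators and $\phi_0$ is separating for $\mscr{B}'$); then $X$ and $y$ agree on the dense set $\mscr{B}\phi_0$ because both commute with $\mscr{B}$, hence $X = y \in \mscr{B}$.

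The main obstacle, and the only place requiring genuine care, is the passage from ``$X\phi_0$ is approximated by $p_n(T)\phi_0$'' to ``$X = \lim p_n(T)$ as operators'': one must ensure the approximating polynomials can be chosen so that $p_n(T)$ converges strongly, not just on the vector $\phi_0$. This is handled by the spectral theorem for the self-adjoint operator $T$: identifying $\mscr{B} \cong L^\infty(\sigma(T), \nu)$ where $\nu$ is the spectral measure of $T$ at $\phi_0$, cyclicity of $\phi_0$ says $\ell^2(\ZZ_+) \cong L^2(\sigma(T),\nu)$ with $\phi_0 \leftrightarrow 1$, and then $\mscr{B}' = \mscr{B} = L^\infty$ is the standard fact that $L^\infty(\nu)$ acting on $L^2(\nu)$ is maximal abelian. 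So the real content is entirely the cyclicity of $\phi_0$, which follows from tridiagonality with nonvanishing off-diagonal entries; the rest is the textbook multiplication-operator argument. I would present it in that order: (1) matrix form of $T$ and tridiagonality; (2) $\phi_0$ is cyclic; (3) conclude via the maximal abelianness of $L^\infty(\nu) \subseteq \B(L^2(\nu))$.
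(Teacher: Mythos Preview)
Your proposal is correct and takes essentially the same approach as the paper: show that $\phi_0$ is cyclic for the abelian algebra $\mscr{B}=\{T\}''$ using the tridiagonal (Jacobi) structure of $T$ with strictly positive off-diagonal entries, then invoke the standard fact that an abelian von Neumann algebra with a cyclic vector is a MASA. The paper does exactly this, citing \cite[Theorem 4.7.7]{AnalysisNow} for the cyclic-vector criterion and writing the recursion $\phi_{n+1}=\tfrac{1}{f(n+1)}T\phi_n-\tfrac{f(n)}{f(n+1)}\phi_{n-1}$ directly, whereas you phrase the cyclicity via the equivalent observation that $T^n\phi_0$ has nonzero $\phi_n$-component; your detour through the spectral-theorem identification $\mscr{B}\cong L^\infty(\nu)$ is a valid (if unnecessary) elaboration of the cited textbook result.
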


\begin{proof}
According to \cite[Theorem 4.7.7]{AnalysisNow}, the claim follows once we show that there exists a cyclic vector for $\mscr{B}=\{T\}''$. We claim that $\phi_0$ is such a vector. Indeed, it is clear that $\phi_0$ belongs to $V=\ov{ \mscr{B} \phi_0}$. Next, assume that $\phi_0,\dotsc,\phi_n\in V$ for some $n\in\ZZ_+$. Then $\phi_{n+1}=\tfrac{1}{f(n+1)}T(\phi_n)-\tfrac{f(n)}{f(n+1)}\phi_{n-1}$ belongs to $V$ and consequently $V=\B(\ell^2(\ZZ_+))$.
\end{proof}

One easily sees that $\mscr{C}_{\SU_q(2)}$ is abelian and $\mscr{C}_{\SU_q(2)}=\{\alpha+\alpha^*\}''$. Indeed, $\alpha+\alpha^*$ is the character of the fundamental representation and the fusion rules of $\SU_q(2)$ imply that $\chi(U^n)\in\{\alpha+\alpha^*\}''$ for all $U^n\in\Irr(\SU_q(2))$. Consequently\footnote{By an abuse of notation, we will write $\psi_1(\mscr{C}_{\SU_q(2)})$ for the von Neumann algebra generated by $\psi_1(\alpha+\alpha^*)$.},
\begin{equation}\label{eq4.1.2}
\mc{Q}_L \mscr{C}_{\SU_q(2)}\mc{Q}_L^*=
\bigl\{\int_{\TT}^{\oplus} T\otimes\I_{\ov{\msf{H}_\lambda}}
\md\mu(\lambda)\,|\, T\in \psi_1(\mscr{C}_{\SU_q(2)})\bigr\}\simeq
\psi_1(\mscr{C}_{\SU_q(2)})\otimes \I_{\LL^{\infty}(\TT)}
\end{equation}
(observe that $\psi_\lambda(\alpha+\alpha^*)=\psi_1(\alpha+\alpha^*)$ for all $\lambda\in\TT$).
Note, using equation \eqref{eq4.1.2}, that $\mscr{C}_{\SU_q(2)}$ does not contain the center of $\LL^{\infty}(\SU_q(2))\simeq \B(\ell^2(\ZZ_{+}))\ov{\otimes} \LL^{\infty}(\TT)$, so it is certainly not a MASA. The next result describes its relative commutant.

\begin{proposition}\label{prop4.1.1}
The relative commutant of $\mscr{C}_{\SU_q(2)}$ is given by
\[\begin{split}
\mc{Q}_L (\mscr{C}_{\SU_q(2)}'\cap \LL^{\infty}(\SU_q(2)))\mc{Q}_L^*&=
\bigl\{\int_{\TT}^{\oplus} T_\lambda\otimes\I_{\ov{\msf{H}_\lambda}}
\md\mu(\lambda)\,|\,  T_\lambda \in \psi_1(\mscr{C}_{\SU_q(2)})\text{ for a.e. }\lambda \in \mathbb{T} \bigr\}\\
&\simeq
\psi_1(\mscr{C}_{\SU_q(2)})\ov{\otimes} \LL^{\infty}(\TT).
\end{split}\]
\end{proposition}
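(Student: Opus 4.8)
The plan is to work entirely on the Hilbert space side via the unitary $\mc{Q}_L$, using the identification \eqref{eq4.1.1} of $\mc{Q}_L \LL^{\infty}(\SU_q(2)) \mc{Q}_L^*$ as the direct integral $\int_{\TT}^{\oplus} \B(\msf{H}_\lambda)\otimes \I_{\ov{\msf{H}_\lambda}}\,\md\mu(\lambda)$, and the identification \eqref{eq4.1.2} of $\mc{Q}_L \mscr{C}_{\SU_q(2)}\mc{Q}_L^*$ as (the diagonal copy of) $\psi_1(\mscr{C}_{\SU_q(2)})\otimes\I_{\LL^{\infty}(\TT)}$. Since taking relative commutants is preserved by the spatial isomorphism $\mc{Q}_L(\cdot)\mc{Q}_L^*$, it suffices to compute the commutant of $\psi_1(\mscr{C}_{\SU_q(2)})\otimes\I_{\LL^{\infty}(\TT)}$ inside $\int_{\TT}^{\oplus} \B(\msf{H}_\lambda)\otimes \I_{\ov{\msf{H}_\lambda}}\,\md\mu(\lambda)$.

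First I would record that, by Lemma \ref{lemma4.1.1}, $\psi_1(\mscr{C}_{\SU_q(2)})$ is a MASA in $\B(\msf{H}_1)=\B(\ell^2(\ZZ_+))$: indeed $\psi_1(\alpha+\alpha^*)=s\M_f+\M_f s^*$ with $f(k)=\sqrt{1-q^{2k}}$, so the $f$-values on $\NN$ are strictly positive and the lemma applies. Call this MASA $\mc{A}$. Next I would invoke the standard theory of decomposable operators on a direct integral (see e.g.~\cite[Chapter IV]{Takesaki1} or the reference already cited, \cite{AnalysisNow}): the commutant, inside the algebra of decomposable operators $\int_{\TT}^{\oplus}\B(\msf{H}_\lambda)\,\md\mu(\lambda)$, of the fibrewise-constant field $\lambda\mapsto \mc{A}$ is precisely the algebra of all decomposable operators $\int_{\TT}^{\oplus} T_\lambda\,\md\mu(\lambda)$ with $T_\lambda\in\mc{A}'\cap\B(\msf{H}_\lambda)=\mc{A}$ for almost every $\lambda$ (the extra diagonalisable multiplication operators $\LL^{\infty}(\TT)$ automatically commute with everything decomposable and are themselves decomposable, hence get absorbed). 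Tensoring throughout with $\I_{\ov{\msf{H}_\lambda}}$ does not change this computation. This yields exactly the displayed description
\[
\mc{Q}_L (\mscr{C}_{\SU_q(2)}'\cap \LL^{\infty}(\SU_q(2)))\mc{Q}_L^*=
\bigl\{\int_{\TT}^{\oplus} T_\lambda\otimes\I_{\ov{\msf{H}_\lambda}}
\,\md\mu(\lambda)\,|\,  T_\lambda \in \mc{A}\text{ for a.e. }\lambda\bigr\}.
\]
Finally, since the field $\lambda\mapsto\mc{A}$ is constant and $\mc{A}$ is, abstractly, $\LL^{\infty}(X)$ for a standard measure space $X$ (being an abelian von Neumann algebra with separable predual, e.g.~$X=[0,1]$), this algebra is canonically $\LL^{\infty}(\TT\times X)\simeq \mc{A}\,\ov{\otimes}\,\LL^{\infty}(\TT) = \psi_1(\mscr{C}_{\SU_q(2)})\,\ov{\otimes}\,\LL^{\infty}(\TT)$, which is the second assertion.

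The one point that needs a little care, and which I expect to be the main (mild) obstacle, is the measurability bookkeeping in the direct-integral commutation theorem: one must check that the constant field $\lambda\mapsto\mc{A}$ is a measurable field of von Neumann algebras and that the fibrewise relative commutant $\lambda\mapsto \mc{A}'\cap\B(\msf{H}_\lambda)$ is again measurable, so that ``$T_\lambda\in\mc{A}$ for a.e.\ $\lambda$'' genuinely defines a von Neumann algebra of decomposable operators; for a constant field over a standard Borel space this is classical, but it is the only non-formal ingredient. Everything else is a direct application of \eqref{eq4.1.1}, \eqref{eq4.1.2}, Lemma \ref{lemma4.1.1}, and the fact that $\mc{Q}_L$ is a unitary intertwiner so that commutants correspond.
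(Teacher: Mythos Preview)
Your proposal is correct and follows essentially the same route as the paper's proof: both transport the problem via $\mc{Q}_L$ using \eqref{eq4.1.1} and \eqref{eq4.1.2}, reduce to computing the fibrewise commutant of the constant field $\lambda\mapsto\psi_1(\mscr{C}_{\SU_q(2)})$ inside the decomposable operators, and then invoke Lemma~\ref{lemma4.1.1} to identify that commutant as $\psi_1(\mscr{C}_{\SU_q(2)})$ itself. The paper's argument is terser (it does not dwell on the measurability bookkeeping or the final tensor-product identification), but there is no substantive difference.
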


\begin{proof}
Inclusion $\supseteq$ clearly follows from equation \eqref{eq4.1.2}, assume that $T$ belongs to $\mc{Q}_L (\mscr{C}_{\SU_q(2)}'\cap \LL^{\infty}(\SU_q(2)))\mc{Q}_L^*$. Using \eqref{eq4.1.1} we can write $T=\int_{\TT}^{\oplus} T_\lambda\otimes\I_{\ov{\msf{H}_\lambda}}\md\mu(\lambda)$ for some $T_\lambda\in\B(\msf{H}_\lambda)$. Our assumption forces $T_\lambda\in \psi_1(\mscr{C}_{\SU_q(2)})'$ for almost all $\lambda\in\TT$. From the definition of $\psi_1$ we see that $\psi_1(\alpha)$ is a weighted shift and Lemma \ref{lemma4.1.1} applies -- $\psi_1(\mscr{C}_{\SU_q(2)})$ is a MASA in $\B(\ell^2(\ZZ_+))$, hence $T_\lambda\in\psi_1(\mscr{C}_{\SU_q(2)})$ and the claim follows.
\end{proof}

Despite the above negative result, we can nonetheless obtain some examples of MASAs. First of all, since $\psi_1(\mscr{C}_{\SU_q(2)})$ is a MASA in $\B(\ell^2(\ZZ_+))$ we immediately get that the subalgebra generated by $\mscr{C}_{\SU_q(2)}$ and the center of $\LL^{\infty}(\SU_q(2))$ is a MASA in $\LL^{\infty}(\SU_q(2))$. More nontrivially, we will prove that $\mscr{C}_{\SU_q(2)}$ is MASA in the smaller von Neumann algebra of fixed points for the scaling group. Recall that we denote this algebra by $\LL^{\infty}(\SU_q(2))^{\tau}$.

\begin{proposition}\label{prop4.1.2}
The algebra of class functions $\mscr{C}_{\SU_q(2)}$ is a MASA in $\LL^{\infty}(\SU_q(2))^{\tau}$, i.e.
\[
\mscr{C}_{\SU_q(2)}'\cap\LL^{\infty}(\SU_q(2))^{\tau}=\mscr{C}_{\SU_q(2)}.
\]
\end{proposition}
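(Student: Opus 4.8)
The plan is as follows. One inclusion is immediate: $\mscr{C}_{\SU_q(2)}$ is abelian and the scaling group fixes every character, so $\mscr{C}_{\SU_q(2)}\subseteq\mscr{C}_{\SU_q(2)}'\cap\LL^{\infty}(\SU_q(2))^{\tau}$. For the reverse inclusion I would start from Proposition~\ref{prop4.1.1}: every $x\in\mscr{C}_{\SU_q(2)}'\cap\LL^{\infty}(\SU_q(2))$ satisfies $\mc{Q}_L x\mc{Q}_L^*=\int_{\TT}^{\oplus}T_\lambda\otimes\I_{\ov{\msf{H}_\lambda}}\,\md\mu(\lambda)$ with $T_\lambda\in\psi_1(\mscr{C}_{\SU_q(2)})$ for $\mu$-a.e.\ $\lambda$. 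The whole point is then to use the extra hypothesis that $x$ is scaling-invariant in order to upgrade this to: the field $\lambda\mapsto T_\lambda$ is essentially constant. Once this is done, $\mc{Q}_L x\mc{Q}_L^*$ is a constant decomposable operator of exactly the form appearing in \eqref{eq4.1.2}, hence $x\in\mscr{C}_{\SU_q(2)}$ and we are finished.

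So the first step is to describe the scaling group in the direct-integral picture. A standard computation (from the formula for the action of $\tau$ on matrix coefficients, see \cite{NeshTu}, applied to the fundamental representation, whose $\uprho$-matrix is $\operatorname{diag}(|q|,|q|^{-1})$ up to the order of the basis) gives $\tau_t(\alpha)=\alpha$ and $\tau_t(\gamma)=|q|^{2it}\gamma$; comparing with the defining formulas for the representations $\psi_\lambda$ one reads off $\psi_\lambda\circ\tau_t=\psi_{\lambda_t\lambda}$, where $\lambda_t:=|q|^{2it}\in\TT$. Since the Haar state is $\tau$-invariant, $\tau_t$ is implemented on $\LL^{2}(\SU_q(2))$ by the unitary $P_t\colon\Lambda_h(a)\mapsto\Lambda_h(\tau_t(a))$, and $\tau_t(y)=P_t y P_t^*$ for every $y\in\LL^{\infty}(\SU_q(2))$. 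Feeding $\psi_\lambda\circ\tau_t=\psi_{\lambda_t\lambda}$ into the defining formula for $\mc{Q}_L$ shows that $V_t:=\mc{Q}_L P_t\mc{Q}_L^*$ acts on the constant-fibre direct integral $\int_{\TT}^{\oplus}\HS(\ell^2(\ZZ_+))\,\md\mu(\lambda)$ simply as the rotation sending the field with components $\xi_\lambda$ to the field with components $\xi_{\lambda_t\lambda}$: no additional fibrewise twist appears, and the map is unitary precisely because $\mu$ is rotation invariant. Conjugating a decomposable operator by $V_t$ then gives $V_t\bigl(\int_{\TT}^{\oplus}T_\lambda\,\md\mu(\lambda)\bigr)V_t^*=\int_{\TT}^{\oplus}T_{\lambda_t\lambda}\,\md\mu(\lambda)$, so the condition $\tau_t(x)=x$ is equivalent to $T_{\lambda_t\lambda}=T_\lambda$ for $\mu$-a.e.\ $\lambda$, and this holds for all $t\in\RR$.

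To conclude, observe that since $|q|\in(0,1)$ the map $t\mapsto\lambda_t=e^{2it\log|q|}$ is a non-trivial continuous homomorphism $\RR\to\TT$, whose image is all of $\TT$; in particular the corresponding rotation action of $\TT$ on $(\TT,\mu)$ is ergodic. Regarding $\lambda\mapsto T_\lambda\otimes\I_{\ov{\msf{H}_\lambda}}$ as an element of $\LL^{\infty}(\TT)\ov{\otimes}\psi_1(\mscr{C}_{\SU_q(2)})$, on which the scaling group acts as $\rho\otimes\id$ with $\rho$ the rotation action on $\LL^{\infty}(\TT)$, the fixed-point algebra is $(\LL^{\infty}(\TT))^{\rho}\ov{\otimes}\psi_1(\mscr{C}_{\SU_q(2)})=\CC\I\ov{\otimes}\psi_1(\mscr{C}_{\SU_q(2)})$, that is, the constant fields. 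Hence $T_\lambda=T$ a.e.\ for some $T\in\psi_1(\mscr{C}_{\SU_q(2)})$, and $\mc{Q}_L x\mc{Q}_L^*=\int_{\TT}^{\oplus}T\otimes\I_{\ov{\msf{H}_\lambda}}\,\md\mu(\lambda)$ lies in $\mc{Q}_L\mscr{C}_{\SU_q(2)}\mc{Q}_L^*$ by \eqref{eq4.1.2}, so $x\in\mscr{C}_{\SU_q(2)}$. I expect the main obstacle to be the bookkeeping that identifies the scaling group with a genuine rotation of the direct integral with no hidden fibrewise part; a secondary, routine point is the passage from scalar ergodicity to operator-valued constancy, which can be handled by slicing with a countable separating family of normal functionals on $\psi_1(\mscr{C}_{\SU_q(2)})$, or by the standard fact that taking fixed points commutes with tensoring by a second von Neumann algebra. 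Once these are in place, ergodicity of the rotation action does the rest.
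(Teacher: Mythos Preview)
Your argument is correct and follows essentially the same route as the paper: both use Proposition~\ref{prop4.1.1} to reduce to a decomposable operator with $T_\lambda\in\psi_1(\mscr{C}_{\SU_q(2)})$, identify the scaling group with the rotation $\xi_\lambda\mapsto\xi_{\lambda|q|^{2it}}$ on the direct integral (the paper cites this as \eqref{eq4.1.4} from \cite{KrajczokModular}, whereas you derive it from the action on generators), and then argue that rotation-invariance forces the field to be essentially constant. The only cosmetic difference is in this last step: the paper writes out an explicit averaging computation with Pettis integrals, while you invoke ergodicity of the rotation action together with the fixed-point/tensor-product argument; both are standard and yield the same conclusion.
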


\begin{proof}
Observe first that since $\mscr{C}_{\SU_q(2)}$ is generated by characters, we have $\mscr{C}_{\SU_q(2)}\subseteq \LL^{\infty}(\SU_q(2))^{\tau}$. 
Take $T=\int_{\TT}^{\oplus}T_\lambda\otimes\I_{\ov{\msf{H}_{\lambda}}}\md\mu(\lambda)$ in $\mc{Q}_L(\mscr{C}_{\SU_q(2)}'\cap \LL^{\infty}(\SU_q(2))^{\tau})\mc{Q}_L^*$. Proposition \ref{prop4.1.1} implies that $T_\lambda\in\psi_1(\mscr{C}_{\SU_q(2)})$ for almost all $\lambda\in\TT$. Denote by $P$ the operator implementing the scaling group for $\SU_q(2)$ and its dual \cite[Definition 6.9]{KustermansVaes1}. We know how to express this operator on the level of direct integrals (see \cite[Proposition 7.3]{KrajczokModular}): for all $t\in\RR$ we have
\begin{equation}\label{eq4.1.4}
\mc{Q}_L P^{it} \mc{Q}_L^*\colon \int_{\TT}^{\oplus} \HS(\msf{H}_\lambda)
\md\mu(\lambda)\ni \int_{\TT}^{\oplus}\xi_\lambda \md\mu(\lambda)\mapsto
\int_{\TT}^{\oplus}\xi_{\lambda|q|^{2it}} \md\mu(\lambda)
\in \int_{\TT}^{\oplus} \HS(\msf{H}_\lambda)
\md\mu(\lambda),
\end{equation}
and since $T$ is invariant under the (transformed) scaling group we have
\begin{equation}\label{eq4.1.3}
\int_{\TT}^{\oplus} T_\lambda \otimes \I_{\ov{\msf{H}_\lambda}}\md\mu(\lambda)=T=(\mc{Q}_L P^{it} \mc{Q}_L^*) T 
(\mc{Q}_L P^{-it} \mc{Q}_L^*)=
\int_{\TT}^{\oplus} T_{\lambda|q|^{2it}}\otimes \I_{\ov{\msf{H}_\lambda}}\md\mu(\lambda).
\end{equation}
It follows that $\TT\ni\lambda\mapsto T_\lambda\in \B(\ell^2(\ZZ_+))$ is constant almost everywhere. Indeed, for $\kappa\in\TT$ denote by $f_\kappa\in \B(\ell^2(\ZZ_+))\ov{\otimes} \LL^{\infty}(\TT)$ the function $\lambda\mapsto T_{\lambda \kappa}$. Equation \eqref{eq4.1.3} implies $f_1=f_\kappa\,(\kappa\in\TT)$. For all $\theta\in\LL^1(\TT),\omega\in \B(\ell^2(\ZZ_+))_*$ we get\footnote{Integrals of $\B(\ell^2(\ZZ_+))$ or $\LL^{\infty}(\TT)\ov{\otimes}\B(\ell^2(\ZZ_+))$--valued functions are understood in the sense of Pettis, where the von Neumann algebras are equipped with the $w^*$--topology.}
\[\begin{split}
&\quad\;
(\omega\otimes\theta)f_1=(\omega\otimes\theta)\bigl(\int_{\TT} f_\kappa\md\mu(\kappa)\bigr)=
\int_{\TT} (\omega\otimes\theta) (f_\kappa)\md\mu(\kappa)\\
&=
\int_{\TT} \int_{\TT}\theta(\lambda) \omega(f_{\kappa}(\lambda))\md\mu(\lambda)\md\mu(\kappa)=
\int_{\TT} \theta(\lambda) \int_{\TT}\omega(T_{\lambda\kappa})\md\mu(\kappa)\md\mu(\lambda)\\
&=
\bigl(\int_{\TT}\theta(\lambda)\md\mu(\lambda)\bigr) \int_{\TT} \omega(T_\kappa)\md\mu(\kappa)=
(\omega\otimes\theta)\bigl(\int_{\TT} T_\kappa \md\mu(\kappa)\otimes 
\I_{\LL^{\infty}(\TT)}\bigr),
\end{split}\]
hence $f_1=\int_{\TT} T_\kappa\md\mu(\kappa)\otimes \I_{\LL^{\infty}(\TT)}$. Consequently, $T$ belongs to $\mc{Q}_L \mscr{C}_{\SU_q(2)}\mc{Q}_L^*$ (see equation \eqref{eq4.1.2}).
\end{proof}

\subsection{A certain bicrossed product construction}\label{subsec:bicross}

In this subsection we present a construction of a class of compact quantum groups $\HH$ given by a bicrossed product of a compact quantum group $\GG$ and the additive group of rational numbers $\QQ$ (in this paper we equip $\QQ$ with the discrete topology), where $\QQ$ acts on $\Linf$ using the scaling group of $\GG$. Our construction is a slight variation of a construction presented in \cite[Section 4.1]{DasDawsSalmi} -- the main difference is that we replace $\RR$ with a discrete group $\QQ$ in order to get a compact quantum group as the bicrossed product. The principal reason why we are interested in this family of quantum groups is the fact that they admit nontrivial inner scaling automorphisms -- a property that appeared in Proposition \ref{Prop:scaling} (see Lemma \ref{lemma4.2.1}. Observe also that equation \eqref{eq4.1.4} implies that nontrivial scaling automorphisms of $\SU_q(2)$ are never inner. Another reason is that these bicrossed products provide examples of compact quantum groups $\HH$ with $\LL^{\infty}(\HH)$ being the type $\operatorname{II}_{\infty}$ injective factor.\\

Later on we will specify to $\GG=\SU_q(2)$, for now let $\GG$ be an arbitrary compact quantum group. Fix a nonzero number $\nu\in \RR\setminus\{0\}$ and denote by $\alpha$ the action of $\QQ$ on $\LL^{\infty}(\GG)$ given by 
\[
\alpha_\gamma(x)=\tau_{\nu \gamma}^{\GG}(x)\quad(\gamma\in \QQ, \, x\in \Linf).
\]
Let $\HH=\QQ\bowtie\GG$ be the resulting quantum group (see \cite[Section 6]{bicrossed}. One can also define $\HH$ as a bicrossed product of $\QQ$ and $\GG$, hence the notation \cite[Definition 2.1]{VaesVainerman}). For the details of this construction and its properties we refer the reader to \cite{VaesVainerman, Wang, bicrossed}, here we will recall only some of its aspects. By abuse of notation, let us also denote by $\alpha$ the map $\alpha\colon \Linf\rightarrow  \ell^{\infty}(\QQ)\ov{\otimes} \Linf$ given by $\alpha(x)(\gamma)=\tau^{\GG}_{\nu\gamma}(x)$. We have
\[
\LL^{\infty}(\HH)=\QQ\ltimes_{\alpha}\Linf=\{\alpha(x), u_\gamma\,|\,
x\in\Linf, \gamma\in\QQ\}''
\]
(where $\QQ\ni\gamma\mapsto \lambda_\gamma\in \B(\ell^2(\QQ))$ is the left regular representation, $u_\gamma=\lambda_\gamma\otimes\I$) and
\[
\ell^{\infty}(\widehat{\HH})=\ell^{\infty}(\QQ)\ov{\otimes}\ell^{\infty}(\widehat{\GG}).
\]
These von Neumann algebras are represented on the Hilbert space
\[
\LL^2(\HH)=\ell^{2}(\QQ)\otimes \LdG.
\]
The bicrossed product $\HH$ is compact as it is built from a discrete and a compact quantum group. In fact, the GNS map for $h_{\HH}$ is given by
\begin{equation}\label{eq4.2.1}
\Lambda_{h_{\HH}}(u_\gamma \alpha(x))=\Lambda_{h_{\widehat{\QQ}}}(\lambda_\gamma)\otimes 
\Lambda_{h_{\GG}}(x)\quad(x\in\Linf, \gamma\in\QQ).
\end{equation}
We can also identify the (left) Haar integral on $\whH$ -- it is equal to $\vp_{\QQ}\otimes \vp_{\whG}$ (where $\vp_{\QQ},\vp_{\whG}$ are the left Haar integrals on $\QQ$ and $\whG$), hence
\[
\nabla_{\vp_{\whH}}=\I\otimes \nabla_{\vp_{\whG}}
\]
(it is a combination of Proposition 2.9, Theorem 2.13 and Proposition 2.16 in \cite{VaesVainerman}). Let $P_{\GG}$ be the positive operator implementing scaling group, see \cite[Definition 5.1, Remark 5.2]{Daele}. Since equality $\nabla_{\vp_{\whG}}^{it}=P_{\GG}^{it}$ holds for any unimodular locally compact quantum group (\cite[Proposition 5.6]{Daele}), we arrive at
\begin{equation}\label{eq4.2.2}
P_{\HH}^{it}=\nabla_{\vp_{\whH}}^{it}=
\I\otimes \nabla_{\vp_{\whG}}^{it}=
\I\otimes P_{\GG}^{it}\quad(t\in\RR).
\end{equation}

The irreducible representations of $\HH$ are indexed by $\QQ\times \IrrG$. The corresponding characters and $\uprho$ -- operators are given by
\[
\chi_{(\gamma,\delta)}= u_\gamma\alpha(\chi_\delta),\quad
\uprho_{(\gamma,\delta)}=\uprho_{\delta}\quad( (\gamma,\delta)\in \QQ\times\IrrG)
\]
(see \cite[Theorem 6.1]{bicrossed}). 
It is a well known property of crossed products that automorphisms with which $\QQ$ acts on $\Linf$ become inner after the inclusion of $\Linf$ into $\QQ\ltimes_{\alpha}\Linf$:
\begin{equation}\label{eq4.2.3}
\alpha(\tau^{\GG}_{\nu \gamma}(x))=u_\gamma
\alpha(x)
u_\gamma^*\quad(x\in\Linf,\gamma\in\QQ).
\end{equation}

Let us now record a simple result concerning the scaling group of $\HH$.

\begin{lemma}\label{lemma4.2.1}$ $
\begin{itemize}
\item We have $\tau^{\HH}_{t}(\alpha(x))=\alpha(\tau^{\GG}_{t}(x))$ and $\tau^{\HH}_t(u_\gamma)=u_\gamma$ for all $t\in\RR,x\in\Linf,\gamma\in\QQ$.
\item For every $t\in\RR$, the scaling automorphism $\tau^{\HH}_{t}$ is trivial if and only if $\tau^{\GG}_{t}$  is trivial. If $\gamma\in\QQ$, then $\tau^{\HH}_{\nu\gamma}$ is inner. 
\end{itemize}
\end{lemma}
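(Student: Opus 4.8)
The plan is to establish the two bullet points essentially by unwinding the definitions of the bicrossed product together with the formulas already recorded in the excerpt. For the first bullet, I would start from the description of the scaling group of a crossed-product-type quantum group. The cleanest route is to use equation \eqref{eq4.2.2}, which identifies $P_{\HH}^{it} = \I \otimes P_{\GG}^{it}$, together with the fact that the scaling group $\tau^{\HH}_t$ is implemented by $P_{\HH}^{it}$ on $\LL^2(\HH) = \ell^2(\QQ) \otimes \LdG$ (this is how $P$ is defined, cf.\ \cite{KustermansVaes1, Daele}). Since $u_\gamma = \lambda_\gamma \otimes \I$ acts only on the first leg and $P_{\HH}^{it}$ acts trivially there, one immediately gets $\tau^{\HH}_t(u_\gamma) = P_{\HH}^{it} u_\gamma P_{\HH}^{-it} = u_\gamma$. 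For $\alpha(x)$, since $\alpha(x)$ "lives on the $\GG$-side" in a way compatible with the tensor splitting of $P_{\HH}$, conjugating by $\I \otimes P_{\GG}^{it}$ should give $\alpha(\tau^{\GG}_t(x))$; the one genuine thing to check is that the element $\alpha(x) \in \ell^\infty(\QQ)\ov\otimes\Linf$, viewed as a decomposable operator, interacts correctly with $\I \otimes P_{\GG}^{it}$ — concretely, that $(\I\otimes P_{\GG}^{it})\alpha(x)(\I\otimes P_{\GG}^{-it})$ is again decomposable with fibre $\tau^{\GG}_t$ applied to the fibre $\tau^{\GG}_{\nu\gamma}(x)$, which holds because $\tau^{\GG}_t$ and $\tau^{\GG}_{\nu\gamma}$ commute. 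Alternatively, and perhaps more robustly, one can verify the formula on generators directly from the general theory of $\tau$ for bicrossed products in \cite{VaesVainerman} (combining their Propositions on the deformed structure), but the $P$-conjugation argument is the most self-contained given what the excerpt has set up.

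For the second bullet, the "only if" direction of the first claim is the contrapositive of the obvious observation: if $\tau^{\GG}_t = \id$ then by the first bullet $\tau^{\HH}_t$ fixes both $\alpha(x)$ for all $x$ and $u_\gamma$ for all $\gamma$, and these generate $\LL^{\infty}(\HH)$, so $\tau^{\HH}_t = \id$. For the "if" direction, suppose $\tau^{\HH}_t = \id$; then in particular $\alpha(\tau^{\GG}_t(x)) = \tau^{\HH}_t(\alpha(x)) = \alpha(x)$ for all $x \in \Linf$, and since $\alpha : \Linf \to \ell^\infty(\QQ)\ov\otimes\Linf$ is injective (being the map $x \mapsto (\tau^{\GG}_{\nu\gamma}(x))_{\gamma}$, which is recovered from its value at $\gamma = 0$), we conclude $\tau^{\GG}_t(x) = x$ for all $x$, i.e.\ $\tau^{\GG}_t = \id$. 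Finally, the innerness of $\tau^{\HH}_{\nu\gamma}$ for $\gamma \in \QQ$ is exactly equation \eqref{eq4.2.3} combined with the first bullet: \eqref{eq4.2.3} says $u_\gamma \alpha(x) u_\gamma^* = \alpha(\tau^{\GG}_{\nu\gamma}(x)) = \tau^{\HH}_{\nu\gamma}(\alpha(x))$, and also $u_\gamma u_{\gamma'} u_\gamma^* = u_{\gamma'} = \tau^{\HH}_{\nu\gamma}(u_{\gamma'})$ since $\QQ$ is abelian, so $u_\gamma$ implements $\tau^{\HH}_{\nu\gamma}$ on all of $\LL^{\infty}(\HH)$.

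The main (and really the only) obstacle is the first bullet: one must be careful that the abstract scaling group of the bicrossed product $\HH$ — as defined in \cite{VaesVainerman, bicrossed} — really does restrict to the "obvious" automorphisms on the generators. If I trust the $P$-implementation formula \eqref{eq4.2.2} together with standard locally compact quantum group theory (scaling group implemented by $P^{it}$, decomposability of operators on $\ell^2(\QQ)\otimes\LdG$ that commute with the first-leg algebra), then the computation is short; the subtlety is purely in making sure the decomposable-operator manipulation and the commutation $[\tau^{\GG}_t, \tau^{\GG}_{\nu\gamma}] = 0$ are invoked correctly. Everything else is a formal consequence of injectivity of $\alpha$, the generation of $\LL^{\infty}(\HH)$ by $\{\alpha(x), u_\gamma\}$, commutativity of $\QQ$, and equation \eqref{eq4.2.3}.
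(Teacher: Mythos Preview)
Your proposal is correct and follows essentially the same approach as the paper: both arguments rest on the identification $P_{\HH}^{it} = \I \otimes P_{\GG}^{it}$ from \eqref{eq4.2.2}, injectivity of $\alpha$, and equation \eqref{eq4.2.3}. The only cosmetic difference is that the paper verifies the first bullet on the GNS level (applying $P_{\HH}^{it}$ to $\Lambda_{h_{\HH}}(\alpha(x))$ and $\Lambda_{h_{\HH}}(u_\gamma)$ via \eqref{eq4.2.1} and concluding by faithfulness of $h_{\HH}$), whereas you compute directly by operator conjugation --- your worry about the decomposable-operator manipulation for $\alpha(x)$ is unwarranted, since conjugating the fibre $\tau^{\GG}_{\nu\gamma}(x)$ by $P_{\GG}^{it}$ immediately gives $\tau^{\GG}_{\nu\gamma}(\tau^{\GG}_t(x))$ as you note.
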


\begin{proof}
The first part is a direct consequence of equations \eqref{eq4.2.1}, \eqref{eq4.2.2}:
\[\begin{split}
&\quad\;
\Lambda_{h_{\HH}}(\tau_t^{\HH}(\alpha(x)))=
(\I\otimes P_{\GG}^{it})(\Lambda_{h_{\widehat{\QQ}}}(\I)\otimes \Lambda_{h_{\GG}}(x))=
\Lambda_{h_{\widehat{\QQ}}}(\I)\otimes \Lambda_{h_{\GG}}(\tau^{\GG}_{t}(x))=
\Lambda_{h_{\HH}}(\alpha(\tau_t^{\GG}(x)))
\end{split}\]
and
\[
\Lambda_{h_{\HH}}(\tau^{\HH}_t(u_\gamma))=
(\I\otimes P_{\GG}^{it})(\Lambda_{h_{\widehat{\QQ}}}(\lambda_\gamma)\otimes \Lambda_{h_{\GG}}(\I))=
\Lambda_{h_{\widehat{\QQ}}}(\lambda_\gamma)\otimes \Lambda_{h_{\GG}}(\I)=
\Lambda_{h_{\HH}}(u_\gamma).
\]
Since $h_{\HH}$ is faithful on $\LL^{\infty}(\HH)$ we get the first claim. As $\alpha$ is a monomorphism, $\tau^{\HH}_t$ is trivial if and only so is $\tau^{\GG}_{t}$. The last claim follows from equation \eqref{eq4.2.3}.
\end{proof}

Let us end these general considerations with an observation that
\[
u_\gamma\in\mscr{C}_{\HH}\quad(\gamma\in\QQ)\quad \textnormal{and}\quad \alpha(\mscr{C}_{\GG})\subseteq \mscr{C}_{\HH}.
\]
Indeed, it is a consequence of \cite[Theorem 3.7]{Wang}.\\

Fix $q\in (-1,1)\setminus\{0\}$. From now on we consider the special case $\GG=\SU_q(2)$ -- accordingly $\HH$ is given by $\HH=\QQ\bowtie \SU_q(2)$. Note that this quantum group depends on two parameters: $q$ and $\nu$. Using Proposition \ref{prop4.1.2} ($\mscr{C}_{\SU_q(2)}$ is MASA in $\LL^{\infty}(\SU_q(2))^{\tau}$) we are able to deduce the following interesting property of $\HH$:

\begin{proposition}\label{prop4.2.1}
Let $\HH=\QQ\bowtie\SU_q(2)$. The von Neumann algebra $\mscr{C}_{\HH}$ is a MASA in $\LL^{\infty}(\HH)$.
\end{proposition}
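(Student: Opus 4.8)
The plan is to leverage the direct-integral picture for $\HH = \QQ \bowtie \SU_q(2)$ together with the crucial fact, proved in Proposition \ref{prop4.1.2}, that $\mscr{C}_{\SU_q(2)}$ is a MASA in the scaling-fixed-point algebra $\LL^{\infty}(\SU_q(2))^{\tau}$. First I would unravel what $\mscr{C}_{\HH}$ is: using the formula $\chi_{(\gamma,\delta)} = u_\gamma\alpha(\chi_\delta)$ for the characters of the irreducible representations of $\HH$ (indexed by $\QQ \times \Irr(\SU_q(2))$), we see $\mscr{C}_{\HH}$ is generated by the unitaries $u_\gamma$ $(\gamma \in \QQ)$ together with $\alpha(\mscr{C}_{\SU_q(2)})$; these commute among themselves because the $u_\gamma$ implement the scaling automorphisms $\tau^{\GG}_{\nu\gamma}$ which act trivially on characters. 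So $\mscr{C}_{\HH} = \{u_\gamma, \alpha(\chi_\delta) : \gamma \in \QQ, \delta \in \Irr(\SU_q(2))\}''$, an abelian algebra sitting inside the crossed product $\QQ \ltimes_{\alpha} \LL^{\infty}(\SU_q(2))$.

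Next I would compute the relative commutant $\mscr{C}_{\HH}' \cap \LL^{\infty}(\HH)$. Take $T$ in this relative commutant. Commuting with all the $u_\gamma$ forces $T$ to lie in the fixed-point algebra of the dual action, hence $T \in \alpha(\LL^{\infty}(\SU_q(2)))$, say $T = \alpha(y)$ for some $y \in \LL^{\infty}(\SU_q(2))$. Now $T$ must also commute with $\alpha(\mscr{C}_{\SU_q(2)})$, which since $\alpha$ is injective means $y \in \mscr{C}_{\SU_q(2)}' \cap \LL^{\infty}(\SU_q(2))$. The point where I want to exploit Proposition \ref{prop4.1.2} rather than the (weaker) Proposition \ref{prop4.1.1} is the following: commuting with every $u_\gamma$ also means, via \eqref{eq4.2.3}, that $\alpha(\tau^{\GG}_{\nu\gamma}(y)) = u_\gamma \alpha(y) u_\gamma^* = \alpha(y)$, so $y = \tau^{\GG}_{\nu\gamma}(y)$ for all $\gamma \in \QQ$. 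Since $\nu \QQ$ is dense in $\RR$ and the scaling group $(\tau^{\GG}_t)_{t\in\RR}$ is $w^*$-continuous, this gives $\tau^{\GG}_t(y) = y$ for all $t \in \RR$, i.e.\ $y \in \LL^{\infty}(\SU_q(2))^{\tau}$. Combining the two constraints, $y \in \mscr{C}_{\SU_q(2)}' \cap \LL^{\infty}(\SU_q(2))^{\tau}$, which by Proposition \ref{prop4.1.2} equals $\mscr{C}_{\SU_q(2)}$. Therefore $T = \alpha(y) \in \alpha(\mscr{C}_{\SU_q(2)}) \subseteq \mscr{C}_{\HH}$, proving $\mscr{C}_{\HH}' \cap \LL^{\infty}(\HH) \subseteq \mscr{C}_{\HH}$, i.e.\ $\mscr{C}_{\HH}$ is a MASA.

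The main obstacle I anticipate is making rigorous the first reduction step — that an element of $\LL^{\infty}(\HH) = \QQ \ltimes_\alpha \LL^{\infty}(\SU_q(2))$ commuting with all $u_\gamma$ must sit inside the copy of $\LL^{\infty}(\SU_q(2))$. This is a standard crossed-product fact (the fixed points of the dual coaction), but here $\QQ$ is discrete so one can argue concretely: expand $T$ in its Fourier series $T = \sum_{\gamma} u_\gamma \alpha(x_\gamma)$ (convergence in an appropriate sense), observe $u_\delta T u_\delta^* = \sum_\gamma u_\gamma \alpha(\tau^{\GG}_{\nu\delta}(x_\gamma))$, and compare coefficients with $T$ to deduce $\tau^{\GG}_{\nu\delta}(x_\gamma) = x_\gamma$ for all $\delta$; but this alone does not kill off-diagonal terms, so one genuinely needs commutation with $\alpha(\mscr{C}_{\SU_q(2)})$ as well — a nonzero coefficient $x_\gamma$ with $\gamma \neq 0$ would have to intertwine $\alpha(c)$ and $\alpha(\tau^{\GG}_{\nu\gamma}(c)) = \alpha(c)$ for all $c \in \mscr{C}_{\SU_q(2)}$, which is harmless, so instead I would note that $u_\gamma \alpha(c) = \alpha(c) u_\gamma$ already (scaling acts trivially on characters), meaning $\mscr{C}_{\SU_q(2)}$ does \emph{not} separate the Fourier modes. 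The cleanest fix is to first use the conditional expectation $E: \LL^{\infty}(\HH) \to \alpha(\LL^{\infty}(\SU_q(2)))$ onto the $\gamma = 0$ mode: faithfulness of $E$ and the fact that $T \geq 0$ (reduce to the positive case, or to $T^*T$) together with $E(T) = T$ when $T$ commutes with all $u_\gamma$ — this last equality holds because $E$ is given by averaging $x \mapsto \sum$-type formula and $u_\gamma T u_\gamma^* = T$ collapses the sum. Once $T = \alpha(y)$ is established, the remainder of the argument above is routine. Alternatively, and perhaps more transparently, one can run the entire argument inside the direct-integral representation coming from Desmedt's theorem, exactly as in the proof of Proposition \ref{prop4.1.2}, replacing the role of $P^{it}$-invariance there by $u_\gamma$-invariance here; this has the advantage of reusing the bookkeeping already set up for $\SU_q(2)$.
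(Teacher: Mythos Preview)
There is a genuine gap. Your core reduction step --- that an element $T$ of the relative commutant must lie in $\alpha(\LL^{\infty}(\SU_q(2)))$ --- is simply false, and the counterexample is already sitting inside $\mscr{C}_{\HH}$ itself: take $T = u_\delta$ for any $\delta \neq 0$. This $T$ commutes with all of $\mscr{C}_{\HH}$ (the algebra is abelian), yet $T \notin \alpha(\LL^{\infty}(\SU_q(2)))$. The conceptual error is the identification of ``commutes with every $u_\gamma$'' with ``fixed by the dual action''. These are different: $u_\gamma T u_\gamma^* = T$ means $T$ is fixed by the \emph{inner} extension of $\alpha_\gamma$ to the crossed product, whereas the dual action $\hat\alpha_\chi$ (whose fixed points are $\alpha(\LL^{\infty}(\SU_q(2)))$) acts by $\hat\alpha_\chi(u_\gamma) = \chi(\gamma) u_\gamma$. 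Your proposed fix via the conditional expectation $E$ inherits the same mistake: $E$ is obtained by averaging over the dual action, not over conjugation by the $u_\gamma$, so $u_\gamma T u_\gamma^* = T$ for all $\gamma$ does \emph{not} imply $E(T) = T$ (again, $u_\delta$ is a counterexample).

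The paper's proof avoids this trap by never claiming $T \in \alpha(\LL^{\infty}(\SU_q(2)))$. Instead it works coefficient-wise: using the canonical expectation it defines $T_\gamma = \EE(u_\gamma^* T)\in\LL^{\infty}(\SU_q(2))$, shows each $T_\gamma$ lies in $\mscr{C}_{\SU_q(2)}' \cap \LL^{\infty}(\SU_q(2))^{\tau} = \mscr{C}_{\SU_q(2)}$ (your arguments for this part are fine), so that each $u_\gamma \alpha(T_\gamma)$ lies in $\mscr{C}_{\HH}$ (crucially using the $u_\gamma$, not just $\alpha(\mscr{C}_{\SU_q(2)})$), and then reassembles $T$ from its Fourier pieces at the $\LL^2$-level, invoking traciality of $h_{\HH}$ on $\mscr{C}_{\HH}$ to pass from $\Lambda_{h_{\HH}}(T)\in\overline{\Lambda_{h_{\HH}}(\mscr{C}_{\HH})}$ back to $T\in\mscr{C}_{\HH}$. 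The key structural point you missed is that $\mscr{C}_{\HH}$ genuinely contains the unitaries $u_\gamma$, so the conclusion must allow $T$ to have nonzero Fourier modes at every $\gamma$.
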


\begin{proof}
First, it is clear that $\mscr{C}_{\HH}$ is commutative. Indeed, since $\mscr{C}_{\SU_q(2)}$ is commutative, commutativity of $\mscr{C}_{\HH}$ follows from \cite[Theorem 3.7]{Wang}. Take now $T\in \mscr{C}_{\HH}'\cap \LL^{\infty}(\HH)$ -- we want to show $T\in\mscr{C}_{\HH}$. Let $\EE\colon \LL^{\infty}(\HH)=\QQ\ltimes_{\alpha} \LL^{\infty}(\SU_q(2))\rightarrow \LL^{\infty}(\SU_q(2))$ be the canonical faithful normal conditional expectation satisfying $\EE(u_\gamma \alpha(x))=\delta_{\gamma,0} x$. Define operators
\[
T_\gamma=\EE(u_\gamma^* T)\in\LL^{\infty}(\SU_q(2))\quad(\gamma\in\QQ).
\]

Clearly we have
\begin{equation}\label{eq4.2.5}
\ismaa{\xi}{T_\gamma \eta}=
\ismaa{\xi}{\EE(u_\gamma^* T) \eta}=
\ismaa{\delta_0 \otimes\xi}{ (u_\gamma^* T) (\delta_0\otimes\eta)}
\end{equation}
for all $\xi,\eta\in\LL^2(\SU_q(2))$. 
Fix $\gamma\in\QQ$. Using the fact that $T\in \mscr{C}_{\HH}'\cap \LL^{\infty}(\HH)$ we will now show $T_\gamma\in \mscr{C}_{\SU_q(2)}'$. Since for any $y\in\mscr{C}_{\SU_q(2)}$ operator $\alpha(y)$ belongs to $\mscr{C}_{\HH}$, we get
\[\begin{split}
&\quad\;
\ismaa{\xi}{T_\gamma y\eta}=
\ismaa{\delta_\gamma\otimes\xi}{T (\delta_0 \otimes y\eta)}=
\ismaa{\delta_\gamma\otimes\xi}{T \alpha( y)(\delta_0 \otimes \eta)}=
\ismaa{\alpha(y^*)(\delta_\gamma\otimes \xi)}{T (\delta_0\otimes\eta)}\\
&=
\ismaa{\delta_0\otimes y^*\xi}{(u_\gamma^* T)(\delta_0\otimes\eta)}=\ismaa{y^* \xi}{T_\gamma\eta}=\ismaa{\xi}{y T_\gamma\eta}
\end{split}\]
for all vectors $\xi,\eta\in\LL^2(\SU_q(2))$ and consequently $T_\gamma\in \mscr{C}_{\SU_q(2)}'$.\\
Take $\gamma'\in \QQ$. Observe that Lemma \ref{lemma4.2.1} together with equation \eqref{eq4.2.3} implies that $\tau^{\HH}_{\nu \gamma'}$ is implemeneted by $u_{\gamma'}\in\mscr{C}_{\HH}$. Using equation \ref{eq4.2.2} we calculate
\[\begin{split}
&\quad\;
\ismaa{\xi}{\tau^{\SU_q(2)}_{\nu\gamma'}(T_\gamma) \eta}=
\ismaa{\delta_0 \otimes P^{-i\nu\gamma'}_{\SU_q(2)}\xi}{
(u_\gamma^* T) (\delta_0\otimes P^{-i\nu\gamma'}_{\SU_q(2)}\eta)}=
\ismaa{\delta_\gamma\otimes\xi}{
P_{\HH}^{i\nu\gamma'} T P_{\HH}^{-i\nu\gamma'} (\delta_0\otimes \eta)}\\
&=
\ismaa{\delta_\gamma\otimes\xi}{\tau^{\HH}_{\nu\gamma'}(T)(\delta_0\otimes\eta)}=
\ismaa{\delta_\gamma\otimes\xi}{u_{ \gamma'} T u_{\gamma'}^* (\delta_0\otimes\eta)}=
\ismaa{\delta_\gamma\otimes\xi}{ T (\delta_0\otimes\eta)}=
\ismaa{\xi}{T_\gamma \eta}
\end{split}\]
and as before we arrive at $\tau_{\nu\gamma'}^{\SU_q(2)}(T_\gamma)=T_\gamma$. Density of $\nu\QQ$ in $\RR$ implies $T_\gamma\in\LL^{\infty}(\SU_q(2))^{\tau}$. Alternatively, one can also obtain this result by showing that $\EE$ commutes with the scaling group and $T\in \LL^{\infty}(\HH)^{\tau}$.\\
These two properties of $T_\gamma$ imply that $T_\gamma\in \mscr{C}_{\SU_q(2)}$ (Proposition \ref{prop4.1.2}) and consequently $\alpha(T_\gamma)\in \mscr{C}_{\HH}$. \\

Formally we have $T=\sum_{\gamma\in\QQ} u_\gamma \alpha(T_\gamma)$. However, this series does not need to converge in the $w^*$-topology (see \cite{Mercer}), which is why we will argue on the $\LL^2$-level. Let us first prove that
\begin{equation}\label{eq4.2.4}
\Lambda_{h_{\HH}}(T)=\sum_{\gamma\in\QQ}\Lambda_{h_{\HH}}(u_\gamma \alpha(T_\gamma))=
\sum_{\gamma\in\QQ} \Lambda_{h_{\widehat{\QQ}}}(\lambda_\gamma)\otimes
\Lambda_{h_{\SU_q(2)}}(T_\gamma).
\end{equation}

Since $\{\delta_\gamma\}_{\gamma\in\QQ}$ forms an orthonormal basis in $\ell^2(\QQ)$, we can write $\Lambda_{h_{\HH}}(T)=\sum_{\gamma\in\QQ} \delta_\gamma\otimes \tilde{T}_{\gamma}$ for some $\tilde{T}_{\gamma}\in \LL^2(\SU_q(2))$. Then
\[
\ismaa{\xi}{\tilde{T}_\gamma}=
\ismaa{\delta_\gamma\otimes\xi}{\sum_{\gamma'\in\QQ} \delta_{\gamma'}\otimes \tilde{T}_{\gamma'}}=
\ismaa{\delta_\gamma\otimes\xi}{\Lambda_{h_{\HH}}(T)}=
\ismaa{\delta_\gamma\otimes\xi}{T(\delta_0\otimes \Lambda_{h_{\SU_q(2)}}(\I)}=\ismaa{\xi}{\Lambda_{h_{\SU_q(2)}}(T_\gamma)}
\]
for all $\gamma\in\QQ,\xi\in\LL^2(\SU_q(2))$ which proves \eqref{eq4.2.4}. Recall that $h_{\HH}$ is tracial on $\mscr{C}_{\HH}$, hence the claim follows from equation \eqref{eq4.2.4} and the following lemma.
\end{proof}

\begin{lemma}\label{lemma4.2.2}
Let $(\M,\omega)$ be a von Neumann algebra with a fixed faithful normal state. Assume that $\N\subseteq \M$ is a von Neumann subalgebra such that $\omega|_{\N}$ is tracial. If $x\in\M$ and $\Lambda_\omega(x)\in\ov{\Lambda_{\omega}(N)}$ then $x\in \N$.
\end{lemma}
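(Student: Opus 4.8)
The plan is to use the tracial conditional expectation onto $\N$ and show that the hypothesis $\Lambda_\omega(x) \in \overline{\Lambda_\omega(\N)}$ forces $x$ to coincide with its image under this expectation. First I would recall that since $\omega|_\N$ is tracial, $\N$ is globally invariant under the modular group $(\sigma_t^\omega)_{t\in\RR}$ (the modular group of $\omega$ restricted to $\N$ is trivial, and by Takesaki's theorem the existence of a $\omega$-preserving normal conditional expectation $\EE\colon \M \to \N$ is equivalent to $\sigma_t^\omega(\N) = \N$ for all $t$; here the tracial restriction gives this directly). Concretely: for $y \in \N$, $\sigma_t^\omega(y) = y$ would be too strong in general, but what we genuinely need is only that the Jones projection $e_\N \colon \LL^2(\M,\omega) \to \overline{\Lambda_\omega(\N)}$ is exactly $\Lambda_\omega \circ \EE$ composed with the inclusion, i.e. $e_\N \Lambda_\omega(a) = \Lambda_\omega(\EE(a))$ for all $a \in \M$. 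This is the standard description of the conditional expectation via the Jones projection, valid precisely because $\EE$ is $\omega$-preserving.

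Next I would combine this with the hypothesis. If $\Lambda_\omega(x) \in \overline{\Lambda_\omega(\N)}$, then $e_\N \Lambda_\omega(x) = \Lambda_\omega(x)$. On the other hand $e_\N \Lambda_\omega(x) = \Lambda_\omega(\EE(x))$. Hence $\Lambda_\omega(x) = \Lambda_\omega(\EE(x))$, and since $\Lambda_\omega$ is injective (as $\omega$ is faithful) we conclude $x = \EE(x) \in \N$, which is exactly the claim.

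The one point that needs care — and the main (minor) obstacle — is justifying the existence of the $\omega$-preserving normal conditional expectation $\EE\colon \M \to \N$ and the formula $e_\N\Lambda_\omega = \Lambda_\omega\EE$ in the generality stated (no factoriality, $\omega$ merely a faithful normal state). For existence: the restriction $\omega|_\N$ being tracial means $\N \subseteq \M^{\sigma^\omega}$ is false in general, so one cannot simply quote invariance of the centralizer; instead one invokes Takesaki's criterion, noting that a faithful normal state on $\N$ whose modular automorphism group is trivial extends the requirement $\sigma_t^\omega|_\N = \op{id}$ — but this is in fact automatic only when $\omega|_\N$ is tracial \emph{and} $\N$ is $\sigma^\omega$-invariant. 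The cleanest route is: since $\omega|_\N$ is a faithful normal tracial state, $\N$ is finite; by \cite[Theorem IX.4.2]{TakesakiII} (or the classical result of Takesaki), a normal $\omega$-preserving conditional expectation $\M \to \N$ exists if and only if $\sigma_t^\omega(\N) = \N$; and in our intended applications (the algebra $A$ in Proposition \ref{Prop:masafactor} and $\mscr{C}_{\HH}$ in Proposition \ref{prop4.2.1}) this invariance holds by construction. For a fully general statement one should simply add the hypothesis $\sigma_t^\omega(\N) = \N$, or observe that $\overline{\Lambda_\omega(\N)}$ being invariant under $\nabla_\omega^{it}$ is forced by $\Lambda_\omega(x)$ lying in it together with $\N$ being a von Neumann algebra — but I would flag that the safe formulation is with the expectation in hand. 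Granting $\EE$, the rest is the two-line argument above using injectivity of $\Lambda_\omega$ and the Jones-projection identity.
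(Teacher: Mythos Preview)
Your approach via the Jones projection and conditional expectation is natural, but it does \emph{not} prove the lemma as stated. You correctly identify the issue and then try to talk yourself out of it: the existence of an $\omega$-preserving normal conditional expectation $\EE\colon \M\to\N$ is, by Takesaki's theorem, \emph{equivalent} to $\sigma_t^{\omega}(\N)=\N$, and traciality of $\omega|_\N$ alone does not give this. (Any abelian subalgebra has tracial restriction; most are not modular-invariant.) Your suggested fixes---adding the hypothesis, or the remark that ``$\overline{\Lambda_\omega(\N)}$ being $\nabla_\omega^{it}$-invariant is forced by $\Lambda_\omega(x)$ lying in it''---do not work: the first weakens the lemma, and the second is simply false. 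So what you have is a proof of a weaker statement than what is claimed.

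The paper's argument avoids conditional expectations entirely and extracts exactly the piece of modular theory that traciality \emph{does} give: the Tomita operator $S=J\nabla^{1/2}$ is isometric on $\Lambda_\omega(\N)$, because $\|\Lambda_\omega(y^*)\|^2=\omega(yy^*)=\omega(y^*y)=\|\Lambda_\omega(y)\|^2$ for $y\in\N$. Since $S$ is closed, it follows that if $\Lambda_\omega(x_i)\to\Lambda_\omega(x)$ with $x_i\in\N$, then also $\Lambda_\omega(x_i^*)\to\Lambda_\omega(x^*)$. With this in hand one checks directly that $x$ commutes with every $y\in\N'$: for analytic $a,b\in\M$ one rewrites $\langle\Lambda_\omega(a),yx\Lambda_\omega(b)\rangle$ using $x\Lambda_\omega(b)=J\sigma_{i/2}(b)^*J\Lambda_\omega(x)$, approximates $x$ by $x_i$, uses $y\in\N'$ to commute past $x_i$, and then passes back using the convergence of the adjoints. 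This shows $x\in\N''=\N$ without any modular-invariance assumption on $\N$.
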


This lemma is well-known to experts but we were not able to locate a precise reference, so we decided to add a proof for completeness.

\begin{proof}
We will show that $x$ commutes with every $y\in\N'$. Take $a,b\in \M$ that are analytic with respect to $(\sigma_t)_{t\in\RR}$ and fix a net $(\Lambda_\omega(x_i))_{i\in I}\,(x_i\in \N)$ which converges to $\Lambda_\omega(x)$. Observe that since $\omega|_{\N}$ is tracial, $J\nabla^{\frac{1}{2}}$ is an isometry on $\Lambda_\omega(\N)$. As $J\nabla^{\frac{1}{2}}$ is closed, it follows that $\lim_{i\in I}\Lambda_\omega(x_i^*)=\Lambda_\omega(x^*)$. Consequently
\[\begin{split}
&\quad\;
\ismaa{\Lambda_\omega(a)}{ yx \Lambda_\omega(b)}=
\ismaa{\Lambda_\omega(a)}{y J \sigma_{i/2}(b)^* J \Lambda_{\omega}(x)}=
\lim_{i\in I}
\ismaa{\Lambda_\omega(a)}{y J \sigma_{i/2}(b)^* J \Lambda_{\omega}(x_i)}\\
&=
\lim_{i\in I}
\ismaa{\Lambda_\omega(a)}{y x_i \Lambda_\omega(b)}=
\lim_{i \in I}\ismaa{J\sigma_{i/2}(a)^* J
\Lambda_\omega(x_i^*)}{y\Lambda_\omega(b)}=
\ismaa{J \sigma_{i/2}(a)^* J
\Lambda_\omega(x^*)}{y\Lambda_\omega(b)}\\
&=
\ismaa{x^*\Lambda_\omega(a)}{y\Lambda_\omega(b)}=
\ismaa{\Lambda_\omega(a)}{xy \Lambda_\omega(b)}.
\end{split}\]
A standard density argument implies $x\in \N''=\N$.
\end{proof}

\begin{remark}
In the proof of Proposition \ref{prop4.2.1}, we argued on the $\LL^2$-level that $\alpha(T_\gamma)\in\mscr{C}_{\HH}\,(\gamma\in\QQ)$ implies that $T\in\mscr{C}_{\HH}$. Alternatively, we could use a Fej\'er-type theorem for crossed products and arrive at the same conclusion (see e.g.~\cite[Theorem 4.10]{CrannNeufang} for a general result).
\end{remark}

In the penultimate result we prove about $\HH=\QQ\bowtie\SU_q(2)$, we study its von Neumann algebra of bounded functions. In particular, we show that for some values of $\nu,q$, it is a factor of type $\operatorname{II}_\infty$ -- we are not aware of another example of a compact quantum group in the literature with this property.

\begin{proposition}\label{prop4.2.2}$ $
\begin{itemize}
\item $\mc{Z}(\LL^{\infty}(\HH))$ is equal to $\{u_{\gamma}\,|\, \gamma\in\QQ\,\cap\, \tfrac{\pi}{\nu \log(|q|)}\,\ZZ\}''$. In particular, it is trivial if $\nu\log(|q|)\notin \pi\QQ$ and isomorphic to $\LL^{\infty}(\TT)$ otherwise.
\item Let $t\in\RR$. The scaling automorphism $\tau^{\HH}_{ t}$ is trivial if and only if $ t\in\tfrac{\pi}{ \log(|q|)}\ZZ$. It is inner if and only if $t\in\nu\QQ+ \tfrac{\pi}{\log(|q|)}\ZZ$.

\item $\HH$ is coamenable and consequently $\LL^{\infty}(\HH)$ is injective. 
\item If $\nu\log(|q|)\notin\pi \QQ$ then $\LL^{\infty}(\HH)$ is a factor of type $\operatorname{II}_{\infty}$.
\end{itemize}
\end{proposition}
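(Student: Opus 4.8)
We will phrase everything in terms of the crossed–product presentation $\LL^{\infty}(\HH)=\QQ\ltimes_{\alpha}\LL^{\infty}(\SU_q(2))$ and the action of the scaling group on $\LL^{\infty}(\SU_q(2))$. The starting observation is that, under the identification $\mc{Q}_L\LL^{\infty}(\SU_q(2))\mc{Q}_L^{\ast}=\int_{\TT}^{\oplus}\B(\msf{H}_\lambda)\otimes\I_{\ov{\msf{H}_\lambda}}\md\mu(\lambda)\cong\B(\ell^2(\ZZ_+))\ov{\otimes}\LL^{\infty}(\TT)$ from \eqref{eq4.1.1}, formula \eqref{eq4.1.4} shows that $\tau^{\SU_q(2)}_t=\op{Ad}(P_{\SU_q(2)}^{it})$ sends a decomposable operator $(T_\lambda)_\lambda$ to $(T_{\lambda|q|^{2it}})_\lambda$; that is, $\tau^{\SU_q(2)}_t$ is the identity on the first tensor leg and acts by the rotation $\lambda\mapsto\lambda|q|^{2it}$ of $(\TT,\mu)$ on the centre $\mathcal{Z}(\LL^{\infty}(\SU_q(2)))\cong\LL^{\infty}(\TT)$. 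Since $t\mapsto|q|^{2it}=e^{2it\log|q|}$ runs over all of $\TT$, two facts drop out. First, $|q|^{2it}=1$ exactly for $t\in\tfrac{\pi}{\log|q|}\ZZ$; by Lemma \ref{lemma4.2.1}, $\tau^{\HH}_t$ is trivial iff $\tau^{\SU_q(2)}_t$ is, so $\tau^{\HH}_t$ is trivial precisely for $t\in\tfrac{\pi}{\log|q|}\ZZ$. Second, the fixed–point algebra $\LL^{\infty}(\SU_q(2))^{\tau}$ consists of the constant fields, hence $\LL^{\infty}(\SU_q(2))^{\tau}\cong\B(\ell^2(\ZZ_+))$ is a factor, and $\mathcal{Z}(\LL^{\infty}(\SU_q(2)))^{\tau}=\CC\I$.

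For the centre of $\LL^{\infty}(\HH)$ I would run the usual analysis of the centre of a crossed product, carried out on the $\LL^2$–level as in the proof of Proposition \ref{prop4.2.1} to avoid the non-convergence of $\sum_\gamma u_\gamma\alpha(T_\gamma)$. If $T\in\mathcal{Z}(\LL^{\infty}(\HH))$ and $T_\gamma:=\EE(u_\gamma^{\ast}T)$, then commutation of $T$ with every $u_\delta$ forces $T_\gamma\in\LL^{\infty}(\SU_q(2))^{\tau}$, while commutation with $\alpha(\LL^{\infty}(\SU_q(2)))$ together with \eqref{eq4.2.3} forces $\tau^{\SU_q(2)}_{-\nu\gamma}(x)\,T_\gamma=T_\gamma\,x$ for all $x$. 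Restricting to $x\in\LL^{\infty}(\SU_q(2))^{\tau}$ and using that this is a factor containing $T_\gamma$ gives $T_\gamma=c_\gamma\I$; the identity then reads $c_\gamma(\tau^{\SU_q(2)}_{-\nu\gamma}(x)-x)=0$, so $T_\gamma=0$ unless $\tau^{\SU_q(2)}_{-\nu\gamma}=\op{id}$, i.e. unless $\nu\gamma\log|q|\in\pi\ZZ$; conversely each such $u_\gamma$ is visibly central. Hence $\mathcal{Z}(\LL^{\infty}(\HH))=\{u_\gamma\,|\,\gamma\in\QQ\cap\tfrac{\pi}{\nu\log|q|}\ZZ\}''$. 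If $\nu\log|q|\notin\pi\QQ$ this index set is $\{0\}$; otherwise $\tfrac{\pi}{\nu\log|q|}\in\QQ$, the index set is infinite cyclic, and it generates a copy of $\LL(\ZZ)\cong\LL^{\infty}(\TT)$. For innerness of $\tau^{\HH}_t$, one inclusion is immediate: $\tau^{\HH}_{\nu\gamma}$ is inner by Lemma \ref{lemma4.2.1} and $\tau^{\HH}_t$ is trivial, hence inner, for $t\in\tfrac{\pi}{\log|q|}\ZZ$, so every $t\in\nu\QQ+\tfrac{\pi}{\log|q|}\ZZ$ works. Conversely, say $\tau^{\HH}_t=\op{Ad}(v)$ with $v$ a unitary in $\LL^{\infty}(\HH)$; since $\tau^{\HH}_t$ fixes each $u_\gamma$, $v$ commutes with $\{u_\gamma\}''$, so its Fourier coefficients $v_\gamma:=\EE(u_\gamma^{\ast}v)$ lie in $\LL^{\infty}(\SU_q(2))^{\tau}$, and expanding $v\alpha(x)=\alpha(\tau^{\SU_q(2)}_t(x))v$ at the $\LL^2$–level (using \eqref{eq4.2.3}) yields $v_\gamma\,x=\tau^{\SU_q(2)}_{t-\nu\gamma}(x)\,v_\gamma$ for all $x$ and $\gamma$. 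Choosing $\gamma_0$ with $v_{\gamma_0}\neq0$, a polar–decomposition argument shows $v_{\gamma_0}v_{\gamma_0}^{\ast}$ and $v_{\gamma_0}^{\ast}v_{\gamma_0}$ are central and $\tau$–fixed, hence scalar by the observation above; rescaling, $v_{\gamma_0}$ is unitary and $\tau^{\SU_q(2)}_{t-\nu\gamma_0}=\op{Ad}(v_{\gamma_0})$ is inner on $\LL^{\infty}(\SU_q(2))$. As already recorded in the text, \eqref{eq4.1.4} shows $\SU_q(2)$ has no nontrivial inner scaling automorphism, so $\tau^{\SU_q(2)}_{t-\nu\gamma_0}$ is trivial, i.e. $t-\nu\gamma_0\in\tfrac{\pi}{\log|q|}\ZZ$, whence $t\in\nu\QQ+\tfrac{\pi}{\log|q|}\ZZ$.

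For coamenability: $\HH=\QQ\bowtie\SU_q(2)$ is a bicrossed product of the amenable discrete group $\QQ$ with the coamenable compact quantum group $\SU_q(2)$, and coamenability is stable under such bicrossed products (see \cite{bicrossed,VaesVainerman}); injectivity of $\LL^{\infty}(\HH)$ follows, or one argues directly that $\QQ\ltimes_\alpha\LL^{\infty}(\SU_q(2))$ is injective, being a crossed product of the type $\op{I}$ (hence injective) algebra $\LL^{\infty}(\SU_q(2))$ by the amenable group $\QQ$. Finally suppose $\nu\log|q|\notin\pi\QQ$. By the centre computation $\LL^{\infty}(\HH)$ is a factor. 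By the first paragraph the $\QQ$–action on $\LL^{\infty}(\SU_q(2))\cong\B(\ell^2(\ZZ_+))\ov{\otimes}\LL^{\infty}(\TT)$ is trivial on the first leg and equals the rotation action $\gamma\mapsto(\lambda\mapsto\lambda|q|^{2i\nu\gamma})$ on $\LL^{\infty}(\TT)$, so $\LL^{\infty}(\HH)\cong\B(\ell^2(\ZZ_+))\ov{\otimes}\bigl(\QQ\ltimes\LL^{\infty}(\TT)\bigr)$. Because $\{|q|^{2i\nu\gamma}\,|\,\gamma\in\QQ\}$ is a dense subgroup of $\TT$, and since $\nu\log|q|\notin\pi\QQ$ every nonzero $\gamma$ gives a nontrivial rotation, the $\QQ$–action on $(\TT,\mu)$ is free, ergodic and measure–preserving; $\QQ$ being amenable, $\QQ\ltimes\LL^{\infty}(\TT)$ is the hyperfinite $\op{II}_1$ factor, whence $\LL^{\infty}(\HH)$ is a factor of type $\op{II}_\infty$ (necessarily the injective one, in agreement with the remark following the proposition).

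I expect the genuinely delicate point to be the converse half of the innerness statement in the second bullet: one must pass from the implementing unitary $v$ to its $\LL^2$–Fourier coefficients, promote a nonzero coefficient to a unitary intertwiner of $\op{id}$ with some $\tau^{\SU_q(2)}_{t-\nu\gamma_0}$ — this is where the factoriality of $\LL^{\infty}(\SU_q(2))^{\tau}$ and the triviality of $\mathcal{Z}(\LL^{\infty}(\SU_q(2)))^{\tau}$ enter — and only then invoke the absence of nontrivial inner scaling automorphisms of $\SU_q(2)$. The remaining ingredients (the crossed–product centre, the free ergodic measure–preserving picture of $\QQ$ acting on $\TT$, and coamenability) are routine once \eqref{eq4.1.4} and \eqref{eq4.2.3} are in hand.
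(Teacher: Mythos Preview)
Your proof is correct, but the route you take for the first two bullets differs meaningfully from the paper's.

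For the centre, the paper first invokes the MASA result (Proposition~\ref{prop4.2.1}) together with Proposition~\ref{Prop:masafactor} to land any central element inside $\overline{\lin}\{u_\gamma\mid\gamma\in\QQ\}$, and then uses an $\LL^2$ commutation argument with $\alpha(y)$ to kill the coefficients with $\tau^{\SU_q(2)}_{\nu\gamma}\neq\op{id}$. You instead run a direct crossed-product analysis: commutation with $\{u_\delta\}$ forces each Fourier coefficient $T_\gamma$ into $\LL^{\infty}(\SU_q(2))^{\tau}$, which you identify (via \eqref{eq4.1.4}) with $\B(\ell^2(\ZZ_+))$, a factor; then commutation with $\alpha(\LL^{\infty}(\SU_q(2)))$ gives the intertwining relation $\tau^{\SU_q(2)}_{-\nu\gamma}(x)\,T_\gamma=T_\gamma\,x$, and factoriality forces $T_\gamma$ scalar. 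For the converse implication on innerness, the paper again uses the MASA machinery (Proposition~\ref{Prop:scaling}) to place the implementing unitary $v$ in $\{u_\gamma\}''$, and then tests against the characteristic function of an arc to produce a contradiction with a nontrivial rotation. Your argument avoids Proposition~\ref{Prop:scaling}: you obtain $v_\gamma\in\LL^{\infty}(\SU_q(2))^{\tau}$ from commutation with $u_\delta$, derive the intertwining relation $v_\gamma\,x=\tau^{\SU_q(2)}_{t-\nu\gamma}(x)\,v_\gamma$, and then use that $v_\gamma v_\gamma^{\ast}$ and $v_\gamma^{\ast}v_\gamma$ lie in $\mathcal{Z}(\LL^{\infty}(\SU_q(2)))^{\tau}=\CC\I$ to promote a nonzero $v_{\gamma_0}$ to a unitary implementing $\tau^{\SU_q(2)}_{t-\nu\gamma_0}$, contradicting the absence of nontrivial inner scaling automorphisms of $\SU_q(2)$.

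Your approach is more self-contained: it does not rely on Proposition~\ref{prop4.2.1} or on Propositions~\ref{Prop:masafactor}--\ref{Prop:scaling}, only on the concrete direct-integral picture of $\LL^{\infty}(\SU_q(2))$ and the observation $\LL^{\infty}(\SU_q(2))^{\tau}\cong\B(\ell^2(\ZZ_+))$. The paper's approach, by contrast, illustrates how the general machinery of Section~\ref{sec:3} feeds back into this example. For the third and fourth bullets your argument coincides with (one of) the paper's; in particular, your identification $\LL^{\infty}(\HH)\cong\B(\ell^2(\ZZ_+))\,\overline{\otimes}\,(\QQ\ltimes\LL^{\infty}(\TT))$ and the free ergodic measure-preserving conclusion is exactly the paper's third proof of the type~$\operatorname{II}_\infty$ claim.
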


\begin{proof}
Observe first that for all $t\in\RR$, the scaling automorphism $\tau^{\HH}_{t}$ is trivial if and only $\tau^{\SU_q(2)}_{t}$ is trivial (Lemma \ref{lemma4.2.1}) which happens if and only if $t\in\tfrac{\pi}{\log(|q|)}\ZZ$ (equation \eqref{eq4.1.3}).\\
Take $x\in \mc{Z}(\LL^{\infty}(\HH))$. Since $\mscr{C}_{\HH}$ is MASA in $\LL^{\infty}(\HH)$, we know that
\[
x\in \ov{\lin\{\chi_\beta\,|\, \beta\in\Irr(\HH):\uprho_\beta=\I_{\beta}\}}^{\,\sot}=
\ov{\lin \{u_\gamma\,|\, \gamma\in\QQ\}}^{\,\sot}
\]
(Proposition \ref{Prop:masafactor}). Write
\[
x=\sot-\underset{i\in I}{\lim}\sum_{\gamma\in\QQ} C^i_\gamma u_{\gamma},\quad
\Lambda_{h_{\HH}}(x)=\sum_{\gamma\in \QQ} C_\gamma \Lambda_{h_{\HH}}(u_\gamma)
\]
for some $C_\gamma,C^i_\gamma\in\CC$, where $\sum_{\gamma\in\QQ} C^i_\gamma u_\gamma$ belongs to $\lin\{u_\gamma\,|\, \gamma\in\QQ\}$ for each $i\in I$. Take now $y\in \LL^{\infty}(\SU_q(2))$. Since $x\in\mc{Z}(\LL^{\infty}(\HH))$, we have
\[\begin{split}
&\quad\;
\sum_{\gamma\in\QQ} C_\gamma \delta_\gamma\otimes \Lambda_{h_{\SU_q(2)}}
(\tau^{\SU_q(2)}_{\nu \gamma}(y))=
\alpha(y) \bigl(\sum_{\gamma\in\QQ} C_\gamma \Lambda_{h_{\HH}}(u_\gamma)\bigr)=
\Lambda_{h_{\HH}}(\alpha(y) x)=\Lambda_{h_{\HH}}(x\alpha(y))\\
&=
x(\delta_0\otimes \Lambda_{h_{\SU_q(2)}}(y))=
\lim_{i\in I} \sum_{\gamma\in\QQ}C^i_\gamma( \delta_\gamma\otimes\Lambda_{h_{\SU_q(2)}}(y)),
\end{split}\]
which implies
\[
C_\gamma \tau^{\SU_q(2)}_{\nu\gamma}(y)=\lim_{i\in I} C^i_\gamma\, y\quad(\gamma\in \QQ).
\]
As this equation holds for every $y\in \LL^{\infty}(\SU_q(2))$, we must have $C_\gamma=0$ whenever $\tau^{\SU_q(2)}_{\nu\gamma}$ is nontrivial, i.e. for $\nu\gamma\notin \tfrac{\pi}{\log(|q|)}\ZZ$. Lemma \ref{lemma4.2.2} gives us
\[
\mc{Z}(\LL^{\infty}(\HH))\subseteq \{u_\gamma \,|\, \gamma\in\QQ\cap \tfrac{\pi}{\nu\log(|q|)}\ZZ\}''.
\]
Inclusion $\supseteq$ is clear, hence we have identified the center of $\LL^{\infty}(\HH)$. If $\nu\log(|q|)\notin\pi\QQ$ then clearly $\QQ\cap\tfrac{\pi}{\nu\log(|q|)}\ZZ=\{0\}$ and $\LL^{\infty}(\HH)$ is a factor. Otherwise $\QQ\cap \tfrac{\pi}{\nu\log(|q|)}\ZZ$ is a subgroup of $\QQ$ isomorphic to $\ZZ$ and $\{u_\gamma\,|\, \gamma\in\QQ\cap \tfrac{\pi}{\nu\log(|q|)}\ZZ\}''$ is therefore isomorphic to $\LL(\ZZ)\simeq\LL^{\infty}(\TT)$ \cite[Theorem A]{Herz}. This proves the first point\footnote{We could also argue that $\LL^{\infty}(\HH)$ is a factor if $\nu\log(|q|)\notin\pi\QQ$ using \cite[Theorem 7.11.11]{Pedersen}.}.\\

Take now $t\in\RR$. If $t=\nu\gamma+\tfrac{\pi}{\log(|q|)}s\in\nu\QQ+\tfrac{\pi}{\log(|q|)}\ZZ$ then $\tau^{\HH}_{ t}=\tau^{\HH}_{\nu \gamma}\tau^{\HH}_{\pi s/\log(|q|)}=\tau^{\HH}_{\nu\gamma}$ is inner by Lemma \ref{lemma4.2.1}. Assume that $t\notin\nu\QQ+\tfrac{\pi}{\log|q|}\ZZ$ and $\tau^{\HH}_{t}=\operatorname{Ad}_v$ for some unitary $v\in\LL^{\infty}(\HH)$. Proposition \ref{Prop:scaling} implies that $v\in \{u_\gamma\,|\, \gamma\in\QQ\}''$, hence we can write
\begin{equation}\label{eq4.2.6}
\Lambda_{h_{\HH}}(v)=\sum_{\gamma\in\QQ} D_\gamma \Lambda_{h_{\HH}}(u_\gamma)=
\sum_{\gamma\in\QQ} D_\gamma (\delta_\gamma\otimes\Lambda_{h_{\SU_q(2)}}(\I))
\end{equation}
for some $D_\gamma\in\CC$. Since $v$ is unitary, we have $\sum_{\gamma\in\QQ} |D_\gamma|^2=1$. Let $f\in\LL^{\infty}(\TT)$ be the characteristic function of the arc $\{e^{i\theta}\,|\, \theta\in [0,\pi]\}\subseteq\TT$ and $F=\mc{Q}_L^*(\int_{\TT}^{\oplus}f(\lambda)\I_{\HS(\msf{H}_\lambda)}\md\mu(\lambda))\mc{Q}_L\in\LL^{\infty}(\SU_q(2))$. Equation \eqref{eq4.2.6} together with Lemma \ref{lemma4.2.1} gives us
\[\begin{split}
&\quad\;\sum_{\gamma\in\QQ} D_\gamma ( \delta_\gamma \otimes \Lambda_{h_{\SU_q(2)}}(\tau^{\SU_q(2)}_{\nu \gamma}(F)))
=
\alpha(F)\bigl( \sum_{\gamma\in\QQ} D_\gamma(\delta_\gamma\otimes \Lambda_{h_{\SU_q(2)}}(\I))\bigr)=
\Lambda_{h_{\HH}}(\alpha(F) v)\\
&=
v\Lambda_{h_{\HH}}(\tau^{\HH}_{- t}(\alpha(F)))=
v\bigl(  \delta_0\otimes \Lambda_{h_{\SU_q(2)}}(\tau^{\SU_q(2)}_{- t}(F))\bigr).
\end{split}\]
Since $v\in \{\lambda_{\gamma}\otimes\I\,|\, \gamma\in\QQ\}''$, the last vector belongs to $\ov{\lin} \{\delta_{\gamma}\otimes \Lambda_{h_{\SU_q(2)}}(\tau^{\SU_q(2)}_{- t}(F))\,|\,\gamma\in\QQ\}$. It follows that there exists $\gamma\in \QQ$ such that
\[
\tau^{\SU_q(2)}_{\nu \gamma}(F)=c \tau^{\SU_q(2)}_{- t}(F)
\]
for some $c\in \CC$. Each scaling automorphism acts by a rotation (equation \eqref{eq4.1.3}), hence $c=1$ and $\tau^{\SU_q(2)}_{t+\nu \gamma}(F)=F$. However, $\tau^{\SU_q(2)}_{t+\nu \gamma}$ is a nontrivial rotation. Indeed, otherwise $t+\nu\gamma \in \tfrac{\pi}{\log(|q|)}\ZZ$ and we assume that it is not the case. It follows that $f$ is equal to its proper rotation, a contradiction. This ends the proof of the second bullet.\\

The compact quantum group $\HH=\QQ\bowtie \SU_q(2)$ is coamenable because $\QQ$ is amenable and $\SU_q(2)$ is coamenable \cite[Theorem 15]{amenability}. It follows that $\LL^{\infty}(\HH)$ is injective \cite[Theorem 3.3]{coamenability}. Alternatively, to obtain injectivity of $\LL^{\infty}(\HH)$ one can also use the fact that a crossed product of an injective von Neumann algebra by an action of an amenable group is injective \cite[Theorem 3.16]{TakesakiIII}.\\

Assume $\nu\log(|q|)\notin \pi\QQ$. We already know that $\LL^{\infty}(\HH)=\QQ\ltimes_{\alpha}\LL^{\infty}(\SU_q(2))$ is a factor. Since the n.s.f.~tracial weight on $\LL^{\infty}(\SU_q(2))\simeq  \B(\ell^2(\ZZ_+))\ov{\otimes} \LL^{\infty}(\TT)$ given by $\Tr\otimes h_{\TT}$ is invariant under the action of $\QQ$, it gives rise to a n.s.f.~tracial weight on $\LL^{\infty}(\HH)$ (\cite[Theorem 1.17]{TakesakiII}) and consequently $\LL^{\infty}(\HH)$ is not of type $\operatorname{III}$. It follows from the proof of \cite[Theorem 1.3]{cqg} that if there was a faithful normal tracial state on $\LL^{\infty}(\HH)$, then $\HH$ would be of Kac type. As this is not the case, $\LL^{\infty}(\HH)$ cannot be of type $\operatorname{II}_{1}$; we are left with two cases, $\operatorname{I}_{\infty}$ and $\operatorname{II}_{\infty}$. Clearly $|\nu \QQ + \tfrac{\pi}{\log(|q|)}\ZZ|=\aleph_0<|\RR|$ hence there exists a scaling automorphism $\tau^{\HH}_{t}$ which is not inner. It is well known that all automorphisms of $\B(\ell^2(\ZZ_+))$ are inner (\cite[II.5.5.14]{Blackadar}), hence $\LL^{\infty}(\HH)$ has to be of type $\operatorname{II}_{\infty}$.\\
Let us also give an alternative proof of the result that $\LL^{\infty}(\HH)$ is not of type $\operatorname{I}_{\infty}$. Let
\[
\EE\colon \LL^{\infty}(\HH)=\QQ\ltimes_{\alpha} \LL^{\infty}(\SU_q(2))\rightarrow \LL^{\infty}(\SU_q(2))
\]
be the canonical faithful normal conditional expectation. Assume by contradiction that $\LL^{\infty}(\HH)$ is of type $\operatorname{I}_{\infty}$. Then it is purely atomic and it follows that $\LL^{\infty}(\SU_q(2))\simeq \B(\ell^2(\ZZ_+))\ov{\otimes} \LL^{\infty}(\TT)$ is purely atomic as well (\cite[Theorem IV.2.2.4]{Blackadar}), which gives us a contradiction.

Yet another way to prove that $\LL^{\infty}(\HH)$ is a factor of type $\operatorname{II}_{\infty}$ is to use properties of crossed products. Indeed, we know that $\LL^{\infty}(\SU_q(2)) \simeq \B(\ell^{2}(\mathbb{Z}_{+})) \overline{\otimes} \LL^{\infty}(\mathbb{T})$ and by formula \ref{eq4.1.4} and the scaling group acts trivially on the first factor and by rotations on the second factor, hence $\LL^{\infty}(\HH) \simeq \B(\ell^{2}(\mathbb{Z}_{+})) \overline{\otimes} \left(  \LL^{\infty}(\mathbb{T}) \rtimes \mathbb{Q}\right)$. If $\nu \log(|q|) \notin \pi \mathbb{Q}$ then all these rotations are irrational, therefore the action is free, ergodic, and it preserves the Lebesgue measure on the unit circle. By \cite[Corollary 7.8]{TakesakiI}  $\LL^{\infty}(\mathbb{T}) \rtimes \mathbb{Q}$ is a  factor of type $\operatorname{II}_1$, hence $\LL^{\infty}(\HH)$ is a factor of type $\operatorname{II}_{\infty}$.
\end{proof}

As a corollary, we can show that our family of bicrossed products contains uncountably many different isomorphism classes of quantum groups. To formulate this result, let us denote by $\HH_{\nu,q}$ the bicrossed product $\QQ\bowtie \SU_q(2)$ constructed using parameter $\nu$.

\begin{corollary}
Let $\nu,\nu'\in \RR\setminus\{0\}, q,q'\in (-1,1)\setminus \{0\}$. If $\HH_{\nu,q}$ and $\HH_{\nu',q'}$ are isomorphic, then $|q|=|q'|$ and $\nu \QQ +\tfrac{\pi}{\log(|q|)}\ZZ=\nu'\QQ + \tfrac{\pi}{\log(|q|)}\ZZ$. In particular, for each $q\in (-1,1)\setminus\{0\}$ the family $\{ \HH_{\nu,q}\,|\, \nu\in \RR\setminus \{0\}\}$ consists of $\mathfrak{c}$ isomorphism classes of compact quantum groups.
\end{corollary}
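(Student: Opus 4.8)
The plan is to read off enough invariants of $\HH_{\nu,q}$ from Proposition \ref{prop4.2.2} — concretely, the sets of real numbers $t$ for which the scaling automorphism $\tau^{\HH_{\nu,q}}_t$ is trivial, respectively inner — and to observe that these are preserved by any isomorphism of compact quantum groups. An isomorphism $\HH_{\nu,q}\cong\HH_{\nu',q'}$ induces in particular a normal $\ast$-isomorphism $\Phi\colon\LL^{\infty}(\HH_{\nu,q})\to\LL^{\infty}(\HH_{\nu',q'})$ intertwining the comultiplications. Such a $\Phi$ carries the Haar integral to the Haar integral (by uniqueness of the latter) and the antipode to the antipode, hence, via the polar decomposition $S=R\,\tau_{-i/2}$ of the antipode, it intertwines the scaling groups: $\Phi\circ\tau^{\HH_{\nu,q}}_t=\tau^{\HH_{\nu',q'}}_t\circ\Phi$ for all $t\in\RR$. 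Consequently, for every $t$ the automorphism $\tau^{\HH_{\nu,q}}_t$ is trivial if and only if $\tau^{\HH_{\nu',q'}}_t$ is trivial, and likewise $\tau^{\HH_{\nu,q}}_t$ is inner if and only if $\tau^{\HH_{\nu',q'}}_t$ is inner, since $\Phi$ sends the identity automorphism to the identity and conjugation by a unitary $u$ to conjugation by $\Phi(u)$.

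Next I would plug in Proposition \ref{prop4.2.2}. Its second bullet identifies $\{t\in\RR:\tau^{\HH_{\nu,q}}_t \text{ is trivial}\}=\tfrac{\pi}{\log(|q|)}\ZZ$, so the previous paragraph forces the equality of subgroups $\tfrac{\pi}{\log(|q|)}\ZZ=\tfrac{\pi}{\log(|q'|)}\ZZ$ of $\RR$. Two cyclic subgroups $a\ZZ$ and $b\ZZ$ with $a,b\neq0$ agree precisely when $|a|=|b|$; as $\log(|q|)$ and $\log(|q'|)$ are both negative this yields $\log(|q|)=\log(|q'|)$, i.e.\ $|q|=|q'|$. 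Feeding this back, the second bullet of Proposition \ref{prop4.2.2} also gives $\{t\in\RR:\tau^{\HH_{\nu,q}}_t\text{ is inner}\}=\nu\QQ+\tfrac{\pi}{\log(|q|)}\ZZ$ and similarly $\{t\in\RR:\tau^{\HH_{\nu',q'}}_t\text{ is inner}\}=\nu'\QQ+\tfrac{\pi}{\log(|q'|)}\ZZ=\nu'\QQ+\tfrac{\pi}{\log(|q|)}\ZZ$, so we conclude $\nu\QQ+\tfrac{\pi}{\log(|q|)}\ZZ=\nu'\QQ+\tfrac{\pi}{\log(|q|)}\ZZ$, which is the first assertion of the corollary.

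For the cardinality claim, fix $q$ and write $c:=\tfrac{\pi}{\log(|q|)}$. Since the construction is parametrised by $\nu\in\RR\setminus\{0\}$, there are at most $\mathfrak{c}$ isomorphism classes among the $\HH_{\nu,q}$. For the opposite bound, consider the map $\nu\mapsto G_\nu:=\nu\QQ+c\ZZ$ into the set of subgroups of $\RR$. If $G_{\nu'}=G_\nu$ then $\nu'\in G_\nu$, and $G_\nu$ is a countable subset of $\RR$; hence every fibre of $\nu\mapsto G_\nu$ is countable, and therefore this map attains $\mathfrak{c}$ distinct values. By the first assertion of the corollary, if $G_\nu\neq G_{\nu'}$ then $\HH_{\nu,q}$ and $\HH_{\nu',q}$ are not isomorphic, so there are at least $\mathfrak{c}$ — hence exactly $\mathfrak{c}$ — isomorphism classes in the family $\{\HH_{\nu,q}:\nu\in\RR\setminus\{0\}\}$.

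The only genuinely structural point, and the place I would be most careful, is the first step: that an isomorphism of compact quantum groups intertwines the scaling groups and respects innerness of automorphisms. Granting that, everything else is a direct application of Proposition \ref{prop4.2.2} together with an elementary computation with subgroups of $\RR$.
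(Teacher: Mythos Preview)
Your proof is correct and follows essentially the same route as the paper: extract from Proposition~\ref{prop4.2.2} the sets $\{t:\tau_t\text{ trivial}\}$ and $\{t:\tau_t\text{ inner}\}$ as isomorphism invariants, deduce $|q|=|q'|$ and the equality of the groups $\nu\QQ+\tfrac{\pi}{\log|q|}\ZZ$, and then count. The only cosmetic difference is that the paper works at the $\mathrm{C}^*$-level (using coamenability and citing \cite[Proposition~3.15]{Hom} for the intertwining of scaling groups), which forces it to observe that the implementing unitaries $u_\gamma$ actually lie in $\mathrm{C}(\HH_{\nu,q})$; by working at the von Neumann level you sidestep this, since a normal $\ast$-isomorphism automatically preserves innerness --- so your worry about the ``genuinely structural point'' is in fact handled more cleanly in your version.
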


\begin{proof}
Let $\phi\colon \mathrm{C}(\HH_{\nu,q})\rightarrow \mathrm{C}(\HH_{\nu',q'})$ be a Hopf $\star$-isomorphism implementing the isomorphism between $\HH_{\nu,q}$ and $\HH_{\nu',q'}$ (recall that $\HH_{\nu,q}$ is coamenable). Since $\phi$ intertwines scaling groups (\cite[Proposition 3.15]{Hom}) it follows that for each $t\in\RR$, $\tau^{\HH_{\nu,q}}_t$ is trivial if and only if $\tau^{\HH_{\nu',q'}}_t$  is trivial and consequently Proposition \ref{prop4.2.2} implies $\tfrac{\pi}{\log(|q|)}\ZZ=\tfrac{\pi}{\log(|q'|)}\ZZ\Rightarrow |q|=|q'|$. Next, since inner scaling automorphisms of $\HH_{\nu,q}$ are implemented by elements of $\mathrm{C}(\HH_{\nu,q})$ (similarly for $\HH_{\nu',q'}$) it follows from the same proposition that $\nu\QQ+\tfrac{\pi}{\log(|q|)}\ZZ=\nu'\QQ + \tfrac{\pi}{\log(|q|)} \ZZ$. The last claim is a consequence of $\dim_{\QQ}( \RR/(\QQ \tfrac{\pi}{\log(|q|)}))=\mathfrak{c}$.
\end{proof}

\subsection{Examples with commutative $\mscr{C}_{\GG}$}\label{subsec:abelian}

In this subsection we will prove that the condition $\sum_{\alpha} \sqrt{\tfrac{\dim(\alpha)}{\dim_q(\alpha)}}<+\infty$ from Theorem \ref{Thm:quasi-split} holds for a certain class of non-Kac type compact quantum groups. More precisely, in this subsection we consider any compact quantum group $\GG$ with the following properties:
\begin{enumerate}
\item there exists an irreducible fundamental representation $U$ with $\dim_q(U)>\dim(U)$ and $\ov{U}\simeq U$,
\item irreducible representations of $\GG$ are labeled by $\ZZ_+$, so that $\IrrG=\{U^n\}_{n\in\ZZ_+}$, where $U^1=U$ and $U^0$ is the trivial representation,
\item we have $U^1\tp U^n\simeq \bigoplus_{k=0}^{n+1} C(k,n)U^k$, with $C(n+1,n)\ge 1$ and $\sup_{n\in\ZZ_+} C(n+1,n)<+\infty$.
\end{enumerate}

Let us mention two classes of compact quantum groups that fit into the above description:

\begin{itemize}
\item
Non-Kac type free orthogonal quantum group $\GG=O_F^+$ satisfies the above conditions with $U$ being the standard fundamental representation (see \cite{BanicaO(n)}).
\item Let $(B,\psi)$ be a finite dimensional \cst-algebra with a non-tracial $\delta$-form. The non-Kac type quantum automorphism group $\GG_{Aut}(B,\psi)$ also satisfies the above conditions (see \cite{BanicaFuss}).
\end{itemize}

To keep the notation lighter, let us write $\dim(n)=\dim(U^n)$ and $\dim_q(n)=\dim_q(U^n)$ for all $n\in\ZZ_+$. Using our assumptions on the representation theory of $\GG$ we can show that $(\tfrac{\dim(n)}{\dim_q(n)})_{n\in\ZZ_+}$ decays at an exponential rate.

\begin{lemma}$ $
\begin{itemize}
\item We have $\ov{U^n}\simeq U^n$ and $U^n\tp U^m\simeq U^m\tp U^n$ for all $n,m\in\ZZ_+$. 
\item There exists $d>0,c>1$ such that $\tfrac{ \dim(n)}{\dim_q(n)}\le \tfrac{d}{c^n}$ for all $n\in\ZZ_+$.
\end{itemize}
\end{lemma}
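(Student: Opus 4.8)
\emph{The first bullet.} I would work in the representation ring $R=\bigoplus_{n\in\ZZ_+}\ZZ\,[U^n]$, with unit $[U^0]$. Since $C(n+1,n)\ge1$, assumption (3) gives
\[
[U^{n+1}]=\frac{1}{C(n+1,n)}\Bigl([U^1][U^n]-\sum_{k=0}^{n}C(k,n)[U^k]\Bigr),
\]
so an induction shows that every $[U^n]$ lies in the unital $\QQ$-subalgebra of $R\otimes\QQ$ generated by $[U^1]$. Hence $R\otimes\QQ=\QQ[[U^1]]$, which is commutative (generated by a single element), so the subring $R$ is commutative; as a finite-dimensional representation is determined up to isomorphism by its class in $R$, this yields $U^n\tp U^m\simeq U^m\tp U^n$ for all $n,m$. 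For self-conjugacy I would use that $[U]\mapsto[\ov U]$ is additive and, by commutativity of $R$, multiplicative; it fixes $[U^0]$ and (by hypothesis) $[U^1]$, hence extends to a $\QQ$-algebra endomorphism of $\QQ[[U^1]]$ fixing the generator, so it is the identity. Thus $\ov{U^n}\simeq U^n$ for all $n$.

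\emph{The second bullet, reduction step.} The decisive preliminary is the symmetry $C(k,n)=C(n,k)$, which I would deduce from Frobenius reciprocity together with $\ov{U^1}\simeq U^1$: the multiplicity of $U^k$ in $U^1\tp U^n$ equals that of $U^n$ in $\ov{U^1}\tp U^k=U^1\tp U^k$. Combined with assumption (3), which forces $C(k,n)=0$ for $k>n+1$ (and hence, by the symmetry, also for $k<n-1$), this makes the fusion with $U^1$ tridiagonal: writing $p_n:=C(n+1,n)$, with $1\le p_n\le P:=\sup_m C(m+1,m)<\infty$, and $s_n:=C(n,n)$, one has $U^1\tp U^n\simeq p_{n-1}U^{n-1}\oplus s_nU^n\oplus p_nU^{n+1}$ for $n\ge1$ and $U^1\tp U^0\simeq U^1$. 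Applying $\dim$ and $\dim_q$ (multiplicative under $\tp$, additive under $\oplus$) then yields, with $v_n:=\dim(n)$, $w_n:=\dim_q(n)$, $d_0:=\dim(U)$, $q_0:=\dim_q(U)$, the two three-term recursions
\[
p_{n-1}v_{n-1}+s_nv_n+p_nv_{n+1}=d_0v_n,\qquad p_{n-1}w_{n-1}+s_nw_n+p_nw_{n+1}=q_0w_n\quad(n\ge1),
\]
with $v_0=w_0=1$, $v_1=d_0$, $w_1=q_0$; that is, the same recursion with two different ``eigenvalues''.

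\emph{The second bullet, Wronskian estimate.} Now set $W_n:=p_n(v_nw_{n+1}-v_{n+1}w_n)$. A direct computation from the two recursions gives $W_n-W_{n-1}=(q_0-d_0)v_nw_n$ for $n\ge1$, and since $W_0=q_0-d_0$, telescoping yields $W_N=(q_0-d_0)\sum_{n=0}^{N}v_nw_n>0$. Writing $\phi_n:=w_n/v_n$, this reads $p_Nv_Nv_{N+1}(\phi_{N+1}-\phi_N)=(q_0-d_0)\sum_{n=0}^{N}v_n^2\phi_n\ge(q_0-d_0)v_N^2\phi_N$. Using $p_N\le P$ and $v_{N+1}\le d_0v_N$ (immediate from $\dim(U^1\tp U^N)=d_0v_N\ge\dim(U^{N+1})$), this gives $\phi_{N+1}\ge c\,\phi_N$ with $c:=1+\tfrac{q_0-d_0}{Pd_0}>1$. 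As $\phi_0=1$ we get $\phi_n\ge c^n$, i.e. $\tfrac{\dim(n)}{\dim_q(n)}=1/\phi_n\le c^{-n}$, which is the claim (with $d=1$); in particular $\sum_n\sqrt{\dim(n)/\dim_q(n)}$ converges, as needed for Theorem \ref{Thm:quasi-split}.

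\emph{Main obstacle.} There is no deep difficulty: the work is in choosing the right formulation. The one point needing care is extracting the tridiagonal shape of the fusion rule, which is precisely where $\ov{U^1}\simeq U^1$ enters, via the symmetry $C(k,n)=C(n,k)$; after that, both the commutativity/self-conjugacy statements and the exponential decay are short formal computations. The quantitative hypotheses are used only to ensure $c>1$, namely $q_0>d_0$ and $P<\infty$.
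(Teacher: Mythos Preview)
Your proof is correct, but both bullets take a genuinely different route from the paper.

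\textbf{First bullet.} The paper proves self-conjugacy first, by observing that each $U^n$ occurs in $(U^1)^{\stp n}$ and using $\ov{U^1}\simeq U^1$ inductively; commutativity of the fusion then falls out from $U^n\tp U^m\simeq\ov{U^n\tp U^m}\simeq\ov{U^m}\tp\ov{U^n}\simeq U^m\tp U^n$. You reverse the order: commutativity comes first, from the fact that the fusion ring is generated (over $\QQ$) by a single element, and self-conjugacy then follows because conjugation is an anti-multiplicative (hence, by commutativity, multiplicative) additive map fixing the generator. Both arguments are short; yours has the pleasant feature of making explicit that everything reduces to ``one generator'', while the paper's is perhaps more direct.

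\textbf{Second bullet.} The paper works with the full fusion rule $U^1\tp U^n\simeq\bigoplus_{k=0}^{n+1}C(k,n)U^k$ as given, sets $A_n=\dim_q(n)/\dim(n)$, and by comparing the $\dim$ and $\dim_q$ equations obtains inductively $A_{n+1}\ge A_n\bigl(1+\tfrac{A_1-1}{\sup_m C(m+1,m)}\bigr)$. You instead first derive a \emph{tridiagonal} fusion $U^1\tp U^n\simeq p_{n-1}U^{n-1}\oplus s_nU^n\oplus p_nU^{n+1}$ from Frobenius reciprocity and $\ov{U^1}\simeq U^1$ (giving $C(k,n)=C(n,k)$, hence $C(k,n)=0$ for $|k-n|>1$), and then run a discrete Wronskian argument on the resulting three-term recursions. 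This is a nice structural observation that the paper does not make or need; it costs you the extra Frobenius step but in exchange gives a clean, classical ODE-style computation. Both routes land on the identical growth constant $c=1+\tfrac{\dim_q(U)-\dim(U)}{P\dim(U)}$ (with $P=\sup_m C(m+1,m)$), and both use $\dim(n+1)\le\dim(U)\dim(n)$ at the final step.
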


Let us note that when $\GG=O_F^+$ or $\GG=\GG_{Aut}(B,\psi)$ then numbers $\dim(n),\dim_q(n)$ are known, see \cite{BanicaO(n), Symmetries}.

\begin{proof}
Observe that we have $(U^1)^{\stp n}\simeq \bigoplus_{k=0}^{n+1} c'_{k,n} U^k$ for some $c'_{k,n}\in \ZZ_+$. As $\ov{U^1}\simeq U^1$, it follows inductively that $\overline{U^n}\simeq U^n$ for all $n\in\ZZ_+$. Equivalence $U^n \tp U^m\simeq U^m\tp U^n$ can now be justified with the following calculations
\[
U^n\tp U^m\simeq \ov{U^n \tp U^m}\simeq
\ov{U^m}\tp \ov{U^n}\simeq U^m\tp U^n.
\]
To prove the second bullet, let us introduce positive numbers $A_n\ge 1$ via $\dim_q(n)=A_n \dim(n)\,(n\in\ZZ_+)$. Clearly $A_0=1$ and we assume that $A_1>1$. The fusion rule $U^1\tp U^n\simeq \bigoplus_{k=0}^{n+1} C(k,n) U^k$ implies
\[
A_1 A_n \dim(1) \dim(n)=\dim_q(U^1\tp U^n)=
\dim_q\bigl(\bigoplus_{k=0}^{n+1} C(k,n) U^k\bigr)=\sum_{k=0}^{n+1}C(k,n) A_k \dim(k)
\]
and
\[
\dim(1) \dim(n)=\dim(U^1\tp U^n)=
\dim\bigl(\bigoplus_{k=0}^{n+1} C(k,n) U^k\bigr)=\sum_{k=0}^{n+1}C(k,n) \dim(k).
\]
Combining these equations gives us
\[\begin{split}
&\quad\;
A_1 A_n \dim(1) \dim(n)\le 
(\max_{k\in\{0,\dotsc,n\}} A_k) \sum_{k=0}^{n} C(k,n)\dim(k)+
C(n+1,n) A_{n+1} \dim(n+1)\\
&=
(\max_{k\in\{0,\dotsc,n\}} A_k)\bigl(\dim(1)\dim(n)-C(n+1,n)\dim(n+1)\bigr) + C(n+1,n) A_{n+1} \dim(n+1),
\end{split}\]
hence
\[\begin{split}
A_{n+1} &\ge 
\max_{k\in\{0,\dotsc,n\}} A_k + \bigl(A_1 A_n-\max_{k\in\{0,\dotsc,n\}} A_k\bigr) \tfrac{\dim(1)\dim(n)}{C(n+1,n) \dim(n+1)}.
\end{split}\]
The above inequality implies $A_{n+1}=\max_{p\in\{0,\dotsc,n+1\}} A_p$. Consequently, we can further write
\[\begin{split}
A_{n+1}\ge A_n + A_n \tfrac{A_1-1}{\sup_{m\in \ZZ_+} C(m+1,m)} \tfrac{\dim(1) \dim(n)}{\dim(n+1)}.
\end{split}\]
Since $U^{n+1}$ is a subrepresentation of $U^1\tp U^n$, we have $\dim(1)\dim(n)\ge \dim(n+1)$ and
\[
A_{n+1} \ge A_n\bigl(1+ \tfrac{A_1-1}{\sup_{m\in\ZZ_+} C(m+1,m)}\bigr).
\]
Write $c=1+\tfrac{A_1-1}{\sup_{m\in\ZZ_+} C(m+1,m)}>1$. We have shown $A_{n+1}\ge c A_n$. Using $\dim_q(n)=A_n \dim(n)$ we arrive at
\[
\dim_q(n)=A_n \dim(n)\ge c^{n-1} A_1 \dim(n)\quad(n\in\NN).
\]
\end{proof}

In particular, the above lemma implies that $\mscr{C}_{\GG}$ is an abelian von Neumann algebra. Theorems \ref{Thm:quasi-split} and \ref{Thm:nonmasa} give us the following corollary (it follows from the fusion rules that the assumptions are satisfied).

\begin{corollary}\label{Cor:quasisplit}
We have $\sum_{n=0}^{\infty} \sqrt{\tfrac{\dim(n)}{\dim_q(n)}}<+\infty$, hence the inclusion $\mscr{C}_{\GG}\subseteq \Linf$ is quasi-split. Furthermore, $\mscr{C}_{\GG}$ is not MASA.
\end{corollary}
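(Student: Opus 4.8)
The plan is to read off the summability from the exponential-decay estimate of the preceding lemma and then quote Theorems \ref{Thm:quasi-split} and \ref{Thm:nonmasa}.

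First I would invoke the lemma to fix constants $d>0$ and $c>1$ with $\tfrac{\dim(n)}{\dim_q(n)}\le d c^{-n}$ for all $n\in\ZZ_+$. Taking square roots, $\sqrt{\tfrac{\dim(n)}{\dim_q(n)}}\le \sqrt{d}\,(c^{-1/2})^n$, and since $0<c^{-1/2}<1$ the series $\sum_{n=0}^{\infty}\sqrt{\tfrac{\dim(n)}{\dim_q(n)}}$ is bounded by a convergent geometric series. Because $\IrrG=\{U^n\}_{n\in\ZZ_+}$ consists of pairwise inequivalent representations, this is exactly the series $\sum_{\alpha\in\IrrG}(\tfrac{\dim(\alpha)}{\dim_q(\alpha)})^{1/2}$ appearing in Theorem \ref{Thm:quasi-split}, whose hypothesis is therefore met; hence $\mscr{C}_{\GG}\subseteq\Linf$ is quasi-split.

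For the MASA assertion I would apply Theorem \ref{Thm:nonmasa}, whose hypotheses are: $\GG$ non-trivial, the inclusion quasi-split, and $\uprho_{\alpha}\neq\mathds{1}_{\alpha}$ for every non-trivial $\alpha\in\IrrG$. The first holds since $U=U^1$ is a non-trivial irreducible representation; the second was just established. For the third, I would recall from the proof of the lemma that the ratios $A_n:=\dim_q(n)/\dim(n)$ satisfy $A_{n+1}\ge cA_n$ with $A_1>1$, so $A_n>1$, i.e.\ $\dim_q(n)>\dim(n)$, for all $n\ge 1$. As $\op{Tr}(\uprho_{U^n})=\dim_q(n)$ while $\uprho_{U^n}=\mathds{1}_{U^n}$ would force $\op{Tr}(\uprho_{U^n})=\dim(n)$, we conclude $\uprho_{U^n}\neq\mathds{1}_{U^n}$ for all $n\ge 1$. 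Theorem \ref{Thm:nonmasa} then gives that $\mscr{C}_{\GG}$ is not a MASA.

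I do not anticipate a genuine obstacle: the substantive work is already done, in the lemma (exponential decay of $\dim/\dim_q$, which uses assumptions (1)--(3)) and in Section \ref{sec:3} (quasi-split together with ``non-Kac witnessed by every non-trivial irreducible'' rules out the MASA property). The only point that needs a moment of attention is that Theorem \ref{Thm:nonmasa} requires $\uprho_{\alpha}\neq\mathds{1}_{\alpha}$ for \emph{all} non-trivial irreducibles, not merely for the fundamental representation --- but this follows at once from the recursion $A_{n+1}\ge cA_n$ already obtained.
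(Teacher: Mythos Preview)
Your proposal is correct and follows the same route as the paper: apply the exponential decay from the preceding lemma to get summability, invoke Theorem~\ref{Thm:quasi-split} for quasi-splitness, and invoke Theorem~\ref{Thm:nonmasa} for the non-MASA conclusion. The paper does not spell out a proof of the corollary beyond the remark that ``it follows from the fusion rules that the assumptions are satisfied''; your verification that $\uprho_{U^n}\neq\mathds{1}_{U^n}$ for all $n\ge 1$ via the inequality $A_{n+1}\ge cA_n$ (established in the lemma's proof) is exactly the kind of check that remark is pointing to.
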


\subsection{Quantum unitary group $U_F^+$}

Let $F$ be an invertible matrix with complex entries and $U_F^+$ the associated compact quantum group. In this subsection we show that the sum condition 
\begin{equation}\label{eq4.4.1}
\sum_{\gamma\in\Irr(U_F^+)}\bigl(\tfrac{\dim(\gamma)}{\dim_q(\gamma)}\bigr)^{\frac{1}{2}}<+\infty
\end{equation}
is satisfied provided $U_F^+$ is ``sufficiently non-Kac'' (see Proposition \ref{prop4.4.1} for a precise result). Consequently, in this case we obtain information about the inclusion $\mscr{C}_{U_F^+}\subseteq\LL^{\infty}(U_F^+)$.\\
The representation theory of $U_F^+$ was described by Banica in \cite[Th{\'e}or{\`e}me 1]{BanicaUn}, let us recall some of its aspects. We can identify $\Irr(U_F^+)$ with the free product of monoids $\ZZ_+\star \ZZ_+$; let $\alpha,\beta$ be the generators and $e$ the neutral element. Then $\alpha$ corresponds to the fundamental representation, $\beta$ to its conjugate and $e$ to the trivial representation. The unique antimultiplicative involution $\ZZ_+\star\ZZ_+\ni x\mapsto \ov{x}\in\ZZ_+\star\ZZ_+$ satisfying $\ov{\alpha}=\beta,\ov{e}=e$ corresponds to the conjugation of representations. Finally, the tensor product of representations is given by
\begin{equation}\label{eq4.4.2}
x\tp y\simeq \bigoplus_{\overset{a,b,c\in \ZZ_+\star \ZZ_+:}{
x=ac,\;y=\ov{c}b}}ab\quad(x,y\in\ZZ_+\star\ZZ_+).
\end{equation}
In order to efficiently calculate the sum \eqref{eq4.4.1}, we need to single out a family of irreducible representations out of which all of $\Irr(U_F^+)$ is built. Observe that each nontrivial word $\gamma\in \Irr(U_F^+)\setminus\{e\}$ has a well defined beginning and an end $s(\gamma),t(\gamma)\in\{\alpha,\beta\}$, e.g. $s(\alpha\beta)=\beta$. Let us define sets
\[\begin{split}
I_{\alpha}&=\{\alpha (\beta\alpha)^n\,|\, n\in\ZZ_+\}\cup \{(\alpha\beta)^n \,|\, n\in\NN\},\\
I_{\beta}&=\{\beta(\alpha\beta)^n \,|\, n\in\ZZ_+\}\cup \{(\beta\alpha)^n\,|\, n\in\NN\}.
\end{split}\]
The following observation was already made e.g.~in \cite{NeshveyevMalacarne}:
\begin{lemma}\label{lemma4.4.1}
Every nontrivial word $\gamma\in \Irr(U_F^+)\setminus\{e\}$ can be uniquely written as
\[
\gamma=x_1\cdots x_p=x_1\tp \cdots\tp x_p
\]
for some $p\in\NN$, $x_1,\dotsc, x_p\in I_\alpha\cup I_\beta$ such that $s(x_k)=t(x_{k+1})$ for $1\le k \le p-1$.
\end{lemma}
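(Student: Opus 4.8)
The plan is to deduce the lemma from two elementary facts about words in $\ZZ_+\star\ZZ_+$: a bookkeeping statement about ``maximal alternating blocks'', and the observation that the hypothesis $s(x_k)=t(x_{k+1})$ forbids any cancellation in the fusion rule \eqref{eq4.4.2}. Throughout I write $\gamma=\gamma_1\cdots\gamma_m$ for a word with letters $\gamma_i\in\{\alpha,\beta\}$, and I use $t(\gamma)=\gamma_1$, $s(\gamma)=\gamma_m$ for the initial and final letters (so $s(\alpha\beta)=\beta$, matching the convention above), and $\ov\alpha=\beta$, $\ov\beta=\alpha$ on single letters. Call a word \emph{alternating} if no two consecutive letters coincide. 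First I would note that unwinding the definitions gives $I_\alpha\cup I_\beta=\{\text{nonempty alternating words}\}$, with $I_\alpha$, resp.\ $I_\beta$, consisting of those whose initial letter is $\alpha$, resp.\ $\beta$; equivalently, a nonempty alternating word $x$ lies in $I_\alpha$ or $I_\beta$ according to the value of $t(x)$.

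Next I would prove existence and uniqueness on the purely combinatorial level. Let $J=\{k:1\le k\le m-1,\ \gamma_k=\gamma_{k+1}\}$ be the set of ``repeat positions'' of $\gamma$ and cut $\gamma$ at exactly the positions in $J$ into consecutive nonempty blocks $x_1,\dots,x_p$ (so $p=|J|+1$). By construction each $x_i$ is alternating, hence lies in $I_\alpha\cup I_\beta$; their concatenation is $\gamma$; and at the junction between $x_k$ and $x_{k+1}$ the two adjacent letters coincide, which is precisely $s(x_k)=t(x_{k+1})$. This gives existence. For uniqueness, suppose $\gamma=y_1\cdots y_q$ is any decomposition as in the statement. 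Since each $y_j$ is alternating it cannot contain two consecutive positions from a repeat, so every element of $J$ is one of the $q-1$ cut points of the $y$-decomposition; conversely the condition $s(y_j)=t(y_{j+1})$ says each cut point \emph{is} a repeat position. Hence the set of cut points of $\gamma=y_1\cdots y_q$ equals $J$, and since a decomposition of a word into consecutive nonempty blocks is determined by its set of cut points, we get $q=p$ and $y_j=x_j$ for all $j$.

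It then remains to identify $x_1\tp\cdots\tp x_p$ with the single irreducible representation $\gamma$. The key claim is: for nontrivial words $u,v$ with $s(u)=t(v)$ one has $u\tp v\simeq uv$. Indeed, a summand $ab$ of \eqref{eq4.4.2} coming from $u=ac$, $v=\ov c b$ with $c\ne e$ would force the initial letter of $\ov c$ — which is $\ov{s(u)}$, the bar of the final letter of $u$ — to equal the initial letter of $v$, i.e.\ $t(v)=\ov{s(u)}\ne s(u)$, contradicting $s(u)=t(v)$; so only $c=e$ survives, and $u\tp v=uv$. Feeding the decomposition of the previous paragraph into this claim inductively — at the $k$-th step one uses $s(x_1\cdots x_k)=s(x_k)=t(x_{k+1})$ together with associativity of $\tp$ — gives $x_1\tp\cdots\tp x_p\simeq x_1\cdots x_p=\gamma$. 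I do not expect a genuine obstacle here: the whole argument is bookkeeping with words, the only representation-theoretic input being \eqref{eq4.4.2}, and the one point requiring care is keeping the conventions for $\ov{\cdot}$, $s(\cdot)$, $t(\cdot)$ consistent — indeed the content of the last step is exactly that these conventions are arranged so that ``$\gamma_k=\gamma_{k+1}$'' matches ``no cancellation at the $k$-th junction''.
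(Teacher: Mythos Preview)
Your proof is correct and follows essentially the same approach as the paper: the paper's proof is a one-line hint that splitting $\delta\alpha^{n}\delta'=\delta\alpha\tp\alpha^{n-1}\delta'$ (and similarly for $\beta$) incurs no extra summands in the fusion rule \eqref{eq4.4.2}, which is exactly your ``$s(u)=t(v)\Rightarrow u\tp v=uv$'' claim. Your write-up is more complete---in particular you spell out the uniqueness argument via the set $J$ of repeat positions, which the paper leaves implicit.
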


The above result follows easily from the observation that if $\delta\alpha^{n}\delta'$ for some $\delta,\delta'\in \Irr(U_F^+)$ and $n\ge 2$ then \eqref{eq4.4.2} implies 
\[
\delta\alpha^n\delta'=\delta\alpha \tp \alpha^{n-1}\delta'
\]
(and similarly for $\delta\beta^n\delta'$). It follows that in order to calculate the sum \eqref{eq4.4.1} we need to find a (quantum) dimension of representations from the sets $I_{\alpha},I_\beta$.

Let us introduce the q-numbers $\left[n\right]_q=\tfrac{q^{-n} - q^{n} }{q^{-1}-q}$, where $n\in\ZZ_+$ and $0<q<1$ \cite{KlimykSchmudgen}. Furthermore, for $n\in\NN$  we will write $w_n^\gamma=\gamma \ov{\gamma} \gamma\cdots$ ($n$ letters), where $\gamma=\alpha$ or $\gamma=\beta$. Thus $t(w^\gamma_n)=\gamma$ and $s(w^{\gamma}_n)$ is equal to $\gamma$ if $n$ is odd and equal to $\ov{\gamma}$ if $n$ is even. We also define $w^\alpha_0=w^\beta_0=e$.

\begin{lemma}\label{lemma4.4.2}
Let $d$ be the classical or the quantum dimension function, $\gamma\in\{\alpha,\beta\}$ and $n\in\ZZ_+$.  If $d(\alpha)=2$, then $d(w^\gamma_n)=n+1$. If $d(\alpha)>2$, then $d(w^\gamma_n)=[n+1]_{q}$, where $0<q<1$ is chosen so that $q^{-1}+q=d(\alpha)$.
\end{lemma}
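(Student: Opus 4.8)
The plan is to compute $d(w^\gamma_n)$ by induction on $n$, using the fusion rule \eqref{eq4.4.2} to obtain a recursion. By symmetry (the involution $x \mapsto \overline{x}$ preserves both dimension functions, so $d(\alpha) = d(\beta)$) it suffices to treat $\gamma = \alpha$; the case $\gamma = \beta$ is identical. First I would record the base cases: $d(w^\alpha_0) = d(e) = 1$ and $d(w^\alpha_1) = d(\alpha)$, which match $n+1$ when $d(\alpha)=2$ and $[n+1]_q$ when $d(\alpha)>2$ (since $[1]_q = 1$ and $[2]_q = q^{-1}+q = d(\alpha)$). The key computational step is to tensor $w^\alpha_n$ on the left by the appropriate generator and read off the decomposition. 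Note that $w^\alpha_{n+1} = \gamma' w^\alpha_n$ where $\gamma' \in \{\alpha,\beta\}$ is chosen so that $t(w^\alpha_{n+1}) = \alpha$; concretely $\gamma' = \alpha$ if $n$ is even and $\gamma' = \beta$ if $n$ is odd, and in either case $\gamma'$ equals $t(w^\alpha_n)$'s conjugate... more precisely $\gamma' = \overline{s(w^\alpha_n)}$ when $n \geq 1$. Applying \eqref{eq4.4.2} to $\gamma' \tp w^\alpha_n$: the pairs $(a,b,c)$ with $\gamma' = ac$, $w^\alpha_n = \overline{c}b$ are $c = e$ (giving the summand $w^\alpha_{n+1} = \gamma' w^\alpha_n$) and $c = \gamma'$, i.e. $a = e$, $\overline{\gamma'} = \overline{c}$ is the first letter of $w^\alpha_n$ — which holds precisely because $\gamma' = \overline{s(w^\alpha_n)}$ — giving $b = w^\alpha_{n-1}$. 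Hence
\[
\gamma' \tp w^\alpha_n \simeq w^\alpha_{n+1} \oplus w^\alpha_{n-1} \qquad (n \geq 1),
\]
so $d(\gamma')\, d(w^\alpha_n) = d(w^\alpha_{n+1}) + d(w^\alpha_{n-1})$, and since $d(\gamma') = d(\alpha)$ we get the three-term recursion
\[
d(w^\alpha_{n+1}) = d(\alpha)\, d(w^\alpha_n) - d(w^\alpha_{n-1}) \qquad (n \geq 1).
\]

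With this recursion in hand the lemma is a routine check. When $d(\alpha) = 2$ the recursion is $a_{n+1} = 2a_n - a_{n-1}$ with $a_0 = 1$, $a_1 = 2$, whose solution is $a_n = n+1$. When $d(\alpha) > 2$, write $d(\alpha) = q^{-1} + q$ with $0 < q < 1$ (this is possible and determines $q$ uniquely since $x + x^{-1} = d(\alpha)$ has exactly one root in $(0,1)$); then the sequence $[n+1]_q = \frac{q^{-(n+1)} - q^{n+1}}{q^{-1}-q}$ satisfies the same recursion — indeed $(q^{-1}+q)(q^{-(n+1)} - q^{n+1}) = (q^{-(n+2)} - q^{n+2}) + (q^{-n} - q^n)$ — with the same initial values $[1]_q = 1$, $[2]_q = q^{-1}+q = d(\alpha)$, so $d(w^\alpha_n) = [n+1]_q$ for all $n$ by induction.

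The only genuine subtlety — the "hard part", though it is not hard — is verifying that the decomposition $\gamma' \tp w^\alpha_n \simeq w^\alpha_{n+1} \oplus w^\alpha_{n-1}$ is exactly right, i.e. that no cancellation of letters occurs beyond the single step and that the parity bookkeeping for which generator to tensor by is correct. This amounts to checking that the only factorizations $\gamma' = ac$ in the free monoid $\ZZ_+ \star \ZZ_+$ are $(a,c) = (\gamma', e)$ and $(e, \gamma')$ (immediate, since $\gamma'$ is a single letter) and, for each, that $\overline{c}$ is a prefix of $w^\alpha_n$: for $c = e$ this is vacuous, and for $c = \gamma'$ it is the identity $\overline{\gamma'} = s(w^\alpha_n)$, which follows from the definition $w^\alpha_n = \alpha\overline{\alpha}\alpha\cdots$ ($n$ letters) together with the choice of $\gamma'$ forced by requiring $t(w^\alpha_{n+1}) = \alpha$. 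One should also note the boundary case $n = 0$ separately (there $w^\alpha_0 = e$ and $\gamma' \tp e \simeq \gamma' = w^\alpha_1$, with no $w^\alpha_{-1}$ term), which is exactly why the recursion is only asserted for $n \geq 1$ and why both base cases $n = 0, 1$ are needed.
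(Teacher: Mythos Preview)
Your overall strategy---derive the three-term recursion $d(w^\alpha_{n+1})=d(\alpha)\,d(w^\alpha_n)-d(w^\alpha_{n-1})$ from the fusion rules and then solve it---is exactly the paper's approach. However, the execution contains a genuine error: you tensor on the \emph{left} when you should tensor on the \emph{right}.

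Concretely, the identity $w^\alpha_{n+1}=\gamma'\,w^\alpha_n$ is false: since $w^\alpha_n=\alpha\beta\alpha\cdots$ always begins with $\alpha$, prepending a letter cannot give $w^\alpha_{n+1}$ (which also begins with $\alpha$); one must append. The mistake surfaces in your verification that ``$\overline{\gamma'}$ is the first letter of $w^\alpha_n$'': by the paper's convention $s(\cdot)$ denotes the \emph{last} letter (recall $s(\alpha\beta)=\beta$), so your choice $\gamma'=\overline{s(w^\alpha_n)}$ makes $\overline{\gamma'}$ equal to the last letter, not the first. For odd $n$ these coincide and you get lucky, but for even $n\ge 2$ they differ, and then your decomposition collapses: e.g.\ for $n=2$ one has $\gamma'=\alpha$, and
\[
\alpha\tp w^\alpha_2=\alpha\tp(\alpha\beta)=\alpha^2\beta,
\]
a single irreducible (the case $c=\alpha$ would require $\alpha\beta$ to start with $\overline\alpha=\beta$, which it does not). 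So the claimed $\gamma'\tp w^\alpha_n\simeq w^\alpha_{n+1}\oplus w^\alpha_{n-1}$ is false in general.

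The fix is immediate and is what the paper does: tensor on the right. With the \emph{same} choice $\gamma'=\overline{s(w^\alpha_n)}$ one has $w^\alpha_n\,\gamma'=w^\alpha_{n+1}$ as words, and applying \eqref{eq4.4.2} to $w^\alpha_n\tp\gamma'$ (so now $\gamma'=\overline{c}b$ forces $c\in\{e,\overline{\gamma'}\}$, and $\overline{\gamma'}=s(w^\alpha_n)$ \emph{is} the last letter of $w^\alpha_n$) gives precisely
\[
w^\alpha_n\tp\gamma'\;\simeq\;w^\alpha_{n+1}\oplus w^\alpha_{n-1}\qquad(n\ge 1).
\]
From here your recursion and its solution go through unchanged.
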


\begin{proof}
Fix $n\in\ZZ_+$. As
\[
(\alpha\beta)^n\alpha\tp\beta=
(\alpha\beta)^{n+1}\oplus (\alpha\beta)^n,
\]
we have
\begin{equation}\label{eq4.4.3}
d(\alpha) d((\alpha\beta)^n\alpha)=
d((\alpha\beta)^{n+1})+d((\alpha\beta)^n).
\end{equation}
Similarly,
\[
(\alpha\beta)^n\alpha \beta\tp \alpha=(\alpha\beta)^{n+1}\alpha\oplus (\alpha\beta)^n\alpha
\]
and
\begin{equation}\label{eq.4.4.4}
d(\alpha) d((\alpha\beta)^{n+1})=
d((\alpha\beta)^{n+1}\alpha)+ d((\alpha\beta)^n \alpha).
\end{equation}
The formula for $d(w^\alpha_n)$ follows from the following elementary claim which can be proven by induction.\\

If $(a_n)_{n\in\ZZ_+}$ is a sequence in $\RR_{\ge 0}$ such that $a_0=1, a_1\ge 2$ and $a_1 a_n=a_{n-1}+a_{n+1}$ for all $n\in\NN$, then
\[
a_n=\begin{cases}
n+1 & a_1=2 \\
[n+1]_q & a_1>2
\end{cases}
\]
where $0<q<1$ is such that $q^{-1}+q=a_1$.\\

The formula for $d(w^\beta_n)$ can be proven analogously using $d(\alpha)=d(\beta)$.
\end{proof}

Using the above result we can show that for ``sufficiently non-Kac'' quantum unitary groups, the sum condition \eqref{eq4.4.1} is satisfied.

\begin{proposition}\label{prop4.4.1}
Let $0<q_{\oon{q}}\le q_{\oon{c}}\le 1$ be given by $q_{\oon{c}}^{-1}+q_{\oon{c}}=\dim(\alpha)$ and $q_{\oon{q}}^{-1}+q_{\oon{q}}=\dim_q(\alpha)$. If $\dim(\alpha)=2$ and $q_{\oon{q}}<0.0861$ or $\dim(\alpha)\ge 3$ and $\tfrac{q_{\oon{q}}}{q_{\oon{c}}}<\bigl(1+ \sqrt{ \tfrac{3\sqrt{5}+5}{10}}\,\bigr)^{-2}=0.2306\dotsc$, then $\sum_{\gamma\in\Irr(U_F^+)}\sqrt{\tfrac{\dim(\gamma)}{\dim_q(\gamma)}}<+\infty$.
\end{proposition}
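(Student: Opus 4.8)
The plan is to collapse the sum over all of $\Irr(U_F^+)$ to a single (super)geometric series, using the unique factorization of Lemma~\ref{lemma4.4.1} together with the multiplicativity of $\dim$ and $\dim_q$ under $\tp$. Writing a nontrivial $\gamma$ as $\gamma=x_1\tp\cdots\tp x_p$ with $x_i\in I_\alpha\cup I_\beta$, $s(x_i)=t(x_{i+1})$, and recording each block as $x_i=w^{c_i}_{n_i}$ with $c_i\in\{\alpha,\beta\}$ and $n_i\ge1$, multiplicativity gives $\bigl(\dim(\gamma)/\dim_q(\gamma)\bigr)^{1/2}=\prod_{i=1}^{p}r_{n_i}$, where $r_n:=\bigl(\dim(w^\alpha_n)/\dim_q(w^\alpha_n)\bigr)^{1/2}$ is, by Lemma~\ref{lemma4.4.2}, unchanged if $w^\alpha_n$ is replaced by $w^\beta_n$. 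The combinatorial point is that an admissible sequence $(x_1,\dots,x_p)$ is exactly the datum of the initial letter $c_1=t(x_1)\in\{\alpha,\beta\}$ and of an arbitrary tuple $(n_1,\dots,n_p)\in\NN^{p}$: once $c_1$ and the exponents are fixed, the left letter of $x_{i+1}$ is forced to equal $s(x_i)$ (which depends only on $c_i$ and the parity of $n_i$), while every exponent inside the corresponding $I$-set is allowed. Hence, setting $S:=\sum_{n\ge1}r_n$,
\[
\sum_{\gamma\in\Irr(U_F^+)}\Bigl(\tfrac{\dim(\gamma)}{\dim_q(\gamma)}\Bigr)^{1/2}=1+\sum_{p\ge1}2\,S^{p},
\]
which is finite precisely when $S<1$. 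So the proposition reduces to proving $S<1$ in each of the two regimes.

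For the estimate of $S$ I would use only the elementary inequalities $q^{-n}\le[n+1]_q\le\tfrac{q^{-n}}{1-q^{2}}$, valid for $0<q<1$ and immediate from $[n+1]_q=q^{-n}+q^{-n+2}+\cdots+q^{n}$, together with Lemma~\ref{lemma4.4.2}. First note $q_{\oon{q}}<1$ forces $\dim_q(\alpha)=q_{\oon{q}}^{-1}+q_{\oon{q}}>2$, so $\dim_q(w^\alpha_n)=[n+1]_{q_{\oon{q}}}\ge q_{\oon{q}}^{-n}$. If $\dim(\alpha)\ge3$, then likewise $\dim(w^\alpha_n)=[n+1]_{q_{\oon{c}}}\le\tfrac{q_{\oon{c}}^{-n}}{1-q_{\oon{c}}^{2}}$; we may assume $\rho:=(q_{\oon{q}}/q_{\oon{c}})^{1/2}<1$ (otherwise the hypothesis fails and there is nothing to prove), so $r_n\le(1-q_{\oon{c}}^{2})^{-1/2}\rho^{n}$, and since $\dim(\alpha)\ge3$ forces $q_{\oon{c}}\le\tfrac{3-\sqrt5}{2}$, hence $(1-q_{\oon{c}}^{2})^{-1}\le\tfrac{3\sqrt5+5}{10}$, summing the geometric series yields $S\le\bigl(\tfrac{3\sqrt5+5}{10}\bigr)^{1/2}\tfrac{\rho}{1-\rho}$, which is $<1$ exactly when $\rho<\bigl(1+\bigl(\tfrac{3\sqrt5+5}{10}\bigr)^{1/2}\bigr)^{-1}$, i.e.\ when $q_{\oon{q}}/q_{\oon{c}}<\bigl(1+\bigl(\tfrac{3\sqrt5+5}{10}\bigr)^{1/2}\bigr)^{-2}$. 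If $\dim(\alpha)=2$, then $\dim(w^\alpha_n)=n+1$, so $r_n\le\sqrt{n+1}\,\bigl(\sqrt{q_{\oon{q}}}\,\bigr)^{n}$ and $S\le\sum_{n\ge1}\sqrt{n+1}\,\bigl(\sqrt{q_{\oon{q}}}\,\bigr)^{n}$; estimating this power series (already $\sqrt{n+1}\le n+1$ together with $\sum_{n\ge0}(n+1)x^{n}=(1-x)^{-2}$ give $S<1$ for $q_{\oon{q}}<\tfrac32-\sqrt2$, and a marginally finer bound gives the stated value $0.0861$) completes this case.

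The part I expect to require the most care is the bookkeeping of the first paragraph: that the assignment ``(initial letter, exponent tuple) $\mapsto$ irreducible representation'' really is a bijection onto $\Irr(U_F^+)\setminus\{e\}$ (this is in substance Lemma~\ref{lemma4.4.1}), and --- less obviously --- that imposing the matching condition $s(x_i)=t(x_{i+1})$ costs nothing in the summation. The latter hinges on the $\alpha\leftrightarrow\beta$ symmetry $\dim(w^\alpha_n)=\dim(w^\beta_n)$, $\dim_q(w^\alpha_n)=\dim_q(w^\beta_n)$ of Lemma~\ref{lemma4.4.2}, which gives $\sum_{x\in I_\alpha}r_{n(x)}=\sum_{x\in I_\beta}r_{n(x)}=S$, so that the contribution of all admissible sequences of a fixed length $p$ is \emph{exactly} $2S^{p}$, not merely bounded by $(2S)^{p}$. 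Once the identity $\sum_{\gamma}(\dim/\dim_q)^{1/2}=1+\sum_{p\ge1}2S^{p}$ is in place, the remaining $q$-number estimates are routine.
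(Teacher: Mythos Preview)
Your proof is correct and follows the same architecture as the paper: decompose via Lemma~\ref{lemma4.4.1}, use multiplicativity and the $\alpha\leftrightarrow\beta$ symmetry of Lemma~\ref{lemma4.4.2} to obtain the exact identity $\sum_\gamma(\dim/\dim_q)^{1/2}=1+2\sum_{p\ge1}S^p$, then estimate $S$. Your $q$-number bound $q^{-n}\le[n+1]_q\le q^{-n}/(1-q^2)$ is a slight streamlining of the paper's computation in the $\dim(\alpha)\ge3$ case and yields exactly the same threshold; the paper keeps an extra factor $(1+q_{\mathrm q}^2)^{-1/2}$ but then discards it.

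One small point: in the $\dim(\alpha)=2$ case your crude bound $\sqrt{n+1}\le n+1$ gives $S\le\sqrt{q_{\mathrm q}}(2-\sqrt{q_{\mathrm q}})/(1-\sqrt{q_{\mathrm q}})^2$, whose threshold is $q_{\mathrm q}<\tfrac32-\sqrt2\approx0.0858$, just short of $0.0861$. The ``marginally finer bound'' you allude to is precisely what the paper does: retain the factor $(1-q_{\mathrm q}^{2n})^{-1/2}\le(1-q_{\mathrm q}^{4})^{-1/2}$ coming from $[n+1]_{q_{\mathrm q}}\ge q_{\mathrm q}^{-n}(1-q_{\mathrm q}^{2n})$, which contributes the additional $(1+q_{\mathrm q}^2)^{-1/2}$ in the denominator and nudges the threshold to $0.0861$.
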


\begin{proof}
Lemma \ref{lemma4.4.1} shows
\begin{equation}\begin{split}\label{eq4.4.6}
&\sum_{\gamma\in \Irr(U_F^+)} \bigl(\tfrac{\dim(\gamma)}{\dim_q(\gamma)}\bigr)^{\frac{1}{2}}=1+
\sum_{\delta\in\{\alpha,\beta\}}
\sum_{p=1}^{\infty}
\sum_{\gamma_1\in I_{\delta}}\sum_{\gamma_2\in I_{s(\gamma_1)}}\cdots
\sum_{\gamma_p\in I_{s(\gamma_{p-1})}}
\bigl(\tfrac{\dim(\gamma_1\stp \cdots \stp \gamma_p)}{\dim_q(\gamma_1\stp\cdots\stp \gamma_p)}\bigr)^{\frac{1}{2}}.
\end{split}
\end{equation}
We have
\begin{equation}\label{eq4.4.12}
\sum_{\gamma\in I_\alpha}
\bigl(\tfrac{\dim(\gamma)}{\dim_q(\gamma)}\bigr)^{\frac{1}{2}}=
\sum_{n=1}^{\infty} \bigl(\tfrac{\dim(w^\alpha_n)}{\dim_q(w^\alpha_n)}\bigr)^{\frac{1}{2}},\quad
\sum_{\gamma\in I_\beta}
\bigl(\tfrac{\dim(\gamma)}{\dim_q(\gamma)}\bigr)^{\frac{1}{2}}=
\sum_{n=1}^{\infty} \bigl(\tfrac{\dim(w^\beta_n)}{\dim_q(w^\beta_n)}\bigr)^{\frac{1}{2}},
\end{equation}
consequently Lemma \ref{lemma4.4.2} implies
\[
S:=\sum_{\gamma\in I_\alpha}\bigl(\tfrac{\dim(\gamma)}{\dim_q(\gamma)}\bigr)^{\frac{1}{2}}=
\sum_{\gamma\in I_\beta}\bigl(\tfrac{\dim(\gamma)}{\dim_q(\gamma)}\bigr)^{\frac{1}{2}}.
\]
It follows from equation \eqref{eq4.4.6} that
\[
\sum_{\gamma\in \Irr(U_F^+)}
\bigl(\tfrac{\dim(\gamma)}{\dim_q(\gamma)}\bigr)^{\frac{1}{2}}=
1+\sum_{\delta\in\{\alpha,\beta\}}\sum_{p=1}^{\infty}
S^p=1+2\sum_{p=1}^{\infty} S^p
\]
thus $\sum_{\gamma\in\Irr(U_F^+)}\sqrt{\tfrac{\dim(\gamma)}{\dim_q(\gamma)}}<+\infty$ if, and only if $S<1$.

Let us first consider the case $\dim(\alpha)\ge 3$. Note that in this case $q_{\oon{c}}<1$. Using equation \eqref{eq4.4.12} and Lemma \ref{lemma4.4.2} we calculate
\[\begin{split}
S&=\sum_{n=1}^{\infty} \bigl(\tfrac{ [n+1]_{q_{\oon{c}}}}{[n+1]_{q_{\oon{q}}}}\bigr)^{\frac{1}{2}}=
\sum_{n=2}^{\infty}\bigl( \tfrac{q_{\oon{c}}^{-n} - q_{\oon{c}}^n }{q_{\oon{q}}^{-n}-q_{\oon{q}}^{n}}\,
\tfrac{ q_{\oon{q}}^{-1} - q_{\oon{q}}}{q_{\oon{c}}^{-1} - q_{\oon{c}}}\bigr)^{\frac{1}{2}}=
\bigl( \tfrac{ q_{\oon{q}}^{-1} - q_{\oon{q}}}{q_{\oon{c}}^{-1} - q_{\oon{c}}} \bigr)^{\frac{1}{2}}
\sum_{n=2}^{\infty}
(\tfrac{q_{\oon{c}}}{q_{\oon{q}}})^{-\frac{n}{2}}
\bigl( \tfrac{1 - q_{\oon{c}}^{2n} }{1-q_{\oon{q}}^{2n}}\bigr)^{\frac{1}{2}}\\
&\le
(\tfrac{q_{\oon{q}}}{q_{\oon{c}}})^{-\frac{1}{2}}
\bigl( \tfrac{ 1 - q_{\oon{q}}^2}{1 - q_{\oon{c}}^2} \bigr)^{\frac{1}{2}}
\tfrac{1}{( 1-q_{\oon{q}}^{4})^{\frac{1}{2}}}
\sum_{n=2}^{\infty} 
(\tfrac{q_{\oon{c}}}{q_{\oon{q}}})^{-\frac{n}{2}}=
\tfrac{1}{(1-q_{\oon{c}}^2)^{\frac{1}{2}} (1+q_{\oon{q}}^2)^{\frac{1}{2}}}\; (\tfrac{q_{\oon{c}}}{q_{\oon{q}}})^{\frac{1}{2}} \; \bigl( -1 - (\tfrac{q_{\oon{q}}}{q_{\oon{c}}})^{\frac{1}{2}} + \tfrac{1}{1-(\frac{q_{\oon{q}}}{q_{\oon{c}}})^{\frac{1}{2}}}\bigr)\\
&=
\tfrac{1}{(1-q_{\oon{c}}^2)^{\frac{1}{2}} (1+q_{\oon{q}}^2)^{\frac{1}{2}}}\; (\tfrac{q_{\oon{c}}}{q_{\oon{q}}})^{\frac{1}{2}}\;
\tfrac{ 1- (1- \frac{q_{\oon{q}}}{q_{\oon{c}}})}{1-(\frac{q_{\oon{q}}}{q_{\oon{c}}})^{\frac{1}{2}}}=
\tfrac{1}{(1-q_{\oon{c}}^2)^{\frac{1}{2}} (1+q_{\oon{q}}^2)^{\frac{1}{2}}}\;
\tfrac{(\frac{q_{\oon{q}}}{q_{\oon{c}}})^{\frac{1}{2}}}{1-(\frac{q_{\oon{q}}}{q_{\oon{c}}})^{\frac{1}{2}}}.
\end{split}\]
In the above inequality we have used $\tfrac{1-q_{\oon{c}}^{2n}}{1-q_{\oon{q}}^{2n}}\le \tfrac{1}{1-q_{\oon{q}}^4}$ for all $n\ge 2$. Now, as $\dim(\alpha)\ge 3$, we have $q_{\oon{c}}\le(3-\sqrt{5})/2$ and since $\tfrac{q_{\oon{q}}}{q_{\oon{c}}}<\bigl(1+ \sqrt{ \tfrac{3\sqrt{5}+5}{10}}\,\bigr)^{-2}$,
\[
S< \tfrac{2}{(4- (3-\sqrt{5})^2)^{\frac{1}{2}}}
\;\tfrac{\bigl(1+ \sqrt{ \frac{3\sqrt{5}+5}{10}}\,\bigr)^{-1}}{1-\bigl(1+ \sqrt{ \frac{3\sqrt{5}+5}{10}}\,\bigr)^{-1}}=
\tfrac{2}{(6\sqrt{5}-10)^{\frac{1}{2}}}\;
\tfrac{1}{
\bigl(1+ \sqrt{ \frac{3\sqrt{5}+5}{10}}\bigr)-1
}=
\bigl( 
\tfrac{3\sqrt{5} + 5}{10}
\bigr)^{\frac{1}{2}}
\tfrac{1}{
\sqrt{ \frac{3\sqrt{5}+5}{10}}
}=1
\]
which proves the claim.\\

Let us now consider the case $\dim(\alpha)=2$. We calculate using again Lemma \ref{lemma4.4.2}
\[\begin{split}
S&=\sum_{n=1}^{\infty} \bigl( \tfrac{ n+1}{[n+1]_{q_{\oon{q}}}}\bigr)^{\frac{1}{2}}=
\sum_{n=2}^{\infty}\bigl(\tfrac{n}{q^{-n}_{\oon{q}} - q^n_{\oon{q}}} \,
(q^{-1}_{\oon{q}}-q_{\oon{q}})\bigr)^{\frac{1}{2}}=
(q_{\oon{q}}^{-1} - q_{\oon{q}})^{\frac{1}{2}}\sum_{n=2}^{\infty}
(n q_{\oon{q}}^{n})^{\frac{1}{2}}\,\tfrac{1}{(1-q_{\oon{q}}^{2n})^{\frac{1}{2}}}\\
&\le
q_{\oon{q}}^{-\frac{1}{2}}
\bigl(\tfrac{1-q_{\oon{q}}^2}{1-q_{\oon{q}}^4}\bigr)^{\frac{1}{2}}
\bigl( - q_{\oon{q}}^{\frac{1}{2}} + \sum_{n=1}^{\infty} n q_{\oon{q}}^{\frac{n}{2}}\bigr)=
\tfrac{q_{\oon{q}}^{-\frac{1}{2}}}{(1+q_{\oon{q}}^2)^{\frac{1}{2}}}\bigl(-q_{\oon{q}}^{\frac{1}{2}} + \tfrac{q_{\oon{q}}^{\frac{1}{2}}}{(1-q_{\oon{q}}^{\frac{1}{2}})^2}\bigr)=
\tfrac{ 1-(1-q_{\oon{q}}^{\frac{1}{2}})^2 }{ (1+q_{\oon{q}}^{2})^{\frac{1}{2}} (1-q_{\oon{q}}^{\frac{1}{2}})^2}=
\tfrac{q_{\oon{q}}^{\frac{1}{2}}(2-q_{\oon{q}}^{\frac{1}{2}}) }{ (1+q_{\oon{q}}^{2})^{\frac{1}{2}} (1-q_{\oon{q}}^{\frac{1}{2}})^2}.
\end{split}\]
One can check that the above expression is increasing for $0<q_{\oon{q}}<1$ and smaller than $1$ when $q_{\oon{q}}<0.0861$.
\end{proof}

\begin{remark}\label{Rem:UFnonexample}
Calculations in the proof of Proposition \ref{prop4.4.1} are far from optimal, however it is clear that there are non-Kac type quantum unitary groups $U_F^+$ with $\sum_{\gamma\in\Irr(U_F^+)} \bigl(\tfrac{\dim(\gamma)}{\dim_q(\gamma)}\bigr)^{\frac{1}{2}}=+\infty$. Indeed, for $\dim(\alpha)=2$ we have $S=\sum_{n=2}^{\infty} (\tfrac{n}{[n]_{q_{\oon{q}}}})^{\frac{1}{2}}> (2/[2]_{q_{\oon{q}}})^{\frac{1}{2}}+(3/[3]_{q_{\oon{q}}})^{\frac{1}{2}}$ hence $S>1$ when $q_{\oon{q}}>0.2134$.
\end{remark}

It follows from the rule \eqref{eq4.4.2} and Lemma \ref{lemma4.4.2} that if $U_F^+$ is not of Kac type, the nontrivial irreducible representations $\gamma$ of $U_F^+$ have $\uprho_{\gamma}\neq \I_{\gamma}$. Using Theorem \ref{Thm:quasi-split} we get the following corollary.

\begin{corollary}\label{Cor:UF}
Assume that $\dim(\alpha)=2$ and $q_{\oon{q}}< 0.0861$ or $\dim(\alpha)\ge 3$ and $\tfrac{q_{\oon{q}}}{q_{\oon{c}}}<\bigl(1+\sqrt{\tfrac{3\sqrt{5}+5}{10}}\bigr)^{-2}=0.2306\dotsc$. Then the inclusion $\mscr{C}_{U_F^+}\subseteq \LL^{\infty}(U_F^+)$ is quasi-split and the relative commutant $\mscr{C}_{U_F^+}'\cap \LL^{\infty}(U_F^+)$ is not contained in $\mscr{C}_{U_F^+}$.
\end{corollary}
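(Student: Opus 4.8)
The first assertion requires nothing beyond Proposition \ref{prop4.4.1}, whose estimates show that $\sum_{\gamma\in\Irr(U_F^+)}\bigl(\tfrac{\dim(\gamma)}{\dim_q(\gamma)}\bigr)^{1/2}<+\infty$ under exactly the stated hypotheses on $q_{\oon{c}}$ and $q_{\oon{q}}$; this is precisely the summability condition of Theorem \ref{Thm:quasi-split}, which therefore tells us that $\mscr{C}_{U_F^+}\subseteq\LL^{\infty}(U_F^+)$ is quasi-split.

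For the second assertion the plan is to feed the quasi-split property into Proposition \ref{Prop:notmasa}. The hypotheses force $\dim_q(\alpha)>\dim(\alpha)$, so $U_F^+$ is not of Kac type, and (as recorded just before the statement) $\uprho_{\gamma}\neq\I_{\gamma}$ for every non-trivial $\gamma\in\Irr(U_F^+)$. I would then invoke the known fact that $\LL^{\infty}(U_F^+)$ is a factor of type $\op{III}$ when $F$ is not proportional to a unitary (see e.g.~\cite{Boundary}; this is exactly where $U_F^+$ is in better shape than $O_F^+$, whose factoriality is not known in general). Applying Proposition \ref{Prop:notmasa} to the quasi-split inclusion $\mscr{C}_{U_F^+}\subseteq\LL^{\infty}(U_F^+)$ then gives that the relative commutant $\mscr{C}_{U_F^+}'\cap\LL^{\infty}(U_F^+)$ is of type $\op{III}$. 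On the other hand $\mscr{C}_{U_F^+}$ is a finite von Neumann algebra, since the Haar integral restricts to a faithful normal trace on it, and a finite von Neumann algebra contains no non-zero type $\op{III}$ von Neumann subalgebra --- the unit of such a subalgebra would be an infinite projection inside a finite one. Hence $\mscr{C}_{U_F^+}'\cap\LL^{\infty}(U_F^+)$ cannot be contained in $\mscr{C}_{U_F^+}$.

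There is also a more self-contained route, which I would mention as an alternative since it sidesteps the classification of $\LL^{\infty}(U_F^+)$. Arguing by contradiction, assume $\mscr{C}_{U_F^+}'\cap\LL^{\infty}(U_F^+)\subseteq\mscr{C}_{U_F^+}$. Then Proposition \ref{Prop:masafactor} (using $\uprho_{\gamma}\neq\I_{\gamma}$ for all non-trivial $\gamma$) forces $\LL^{\infty}(U_F^+)$ to be a factor, and the assumed inclusion gives $\mscr{C}_{U_F^+}'\cap\LL^{\infty}(U_F^+)=\mathcal{Z}(\mscr{C}_{U_F^+})$. If in addition $\mscr{C}_{U_F^+}$ is a factor --- hence a $\op{II}_1$ factor, being infinite-dimensional and finite --- then $\mscr{C}_{U_F^+}'\cap\LL^{\infty}(U_F^+)=\CC\I$, so $\mscr{C}_{U_F^+}\vee\LL^{\infty}(U_F^+)'=\B(\LL^{2}(U_F^+))$, and the quasi-split property exhibits the type $\op{I}$ factor $\B(\LL^{2}(U_F^+))$ as a normal quotient, hence a direct summand, of $\mscr{C}_{U_F^+}\ov{\otimes}\LL^{\infty}(U_F^+)^{op}$ --- impossible, since the latter, being the tensor product of a $\op{II}_1$ factor with a von Neumann algebra, has no non-zero type $\op{I}$ direct summand. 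In both routes the crux, and what I expect to be the main obstacle, is the same: the quasi-split property is ``soft'' and only yields a genuine obstruction once one has hard structural input --- the type of $\LL^{\infty}(U_F^+)$ in the first route, the factoriality of $\mscr{C}_{U_F^+}$ in the second. With either in hand the deduction is short, and I would use the first route, the type $\op{III}$ property of $\LL^{\infty}(U_F^+)$ being available in the literature.
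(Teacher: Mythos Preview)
Your argument is correct and your Route 1 is exactly the paper's main proof; the only adjustment is the reference for the type $\op{III}$ factoriality of $\LL^{\infty}(U_F^+)$, which the paper takes from \cite[Theorem 33]{CCAP} rather than \cite{Boundary}.

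Your Route 2 parallels the paper's alternative argument, with two small differences worth noting. First, the paper actually supplies the missing ``hard structural input'' --- factoriality of $\mscr{C}_{U_F^+}$ --- by invoking Banica's result \cite[Th\'eor\`eme 1(iii)]{BanicaUn} that the character of the fundamental representation is distributed as a circular element, whence $\mscr{C}_{U_F^+}\simeq\LL(F_2)$. Second, once both algebras are factors and the inclusion is irreducible, the paper simply observes via Remark \ref{rem:splitfactor} that a quasi-split inclusion of factors is split, and a proper split inclusion (having an intermediate type $\op{I}$ factor) is never irreducible; your direct type analysis of $\mscr{C}_{U_F^+}\ov{\otimes}\LL^{\infty}(U_F^+)^{op}$ reaches the same contradiction by a slightly different path.
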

\begin{proof}
By \cite[Theorem 33]{CCAP} $\LL^{\infty}(U_F^+)$ is a type $\operatorname{III}$ factor. Therefore Proposition \ref{Prop:notmasa} applies and we know that $\mscr{C}_{U_F^+}'\cap \LL^{\infty}(U_F^+)$ is a type $\operatorname{III}$ algebra, hence cannot be contained in $\mscr{C}_{U_F^+}$, which is a finite von Neumann algebra.

An alternative argument can go as follows. By \cite[Th\'{e}or\`{e}me 1 (iii)]{BanicaUn} the character of the fundamental representation of $U_F^+$ has the same distribution (with respect to the Haar integral) as a circular variable\footnote{Recall that $x$ is circular if $x=s_1+is_2$, where $s_1$ and $s_2$ are freely independent semicircular variables.}, so we can conclude that $\mscr{C}_{U_F^+}$ is isomorphic to the free group factor $\LL(F_2)$, in particular it is a factor. If the relative commutant $\mscr{C}_{U_F^+}'\cap \LL^{\infty}(U_F^+)$ was contained in $\mscr{C}_{U_F^+}$ then the center of $\LL^{\infty}(U_F^+)$ would be contained in $\mscr{C}_{U_F^+}$, so $\LL^{\infty}(U_F^+)$ has to be a factor. What is more, if $\mscr{C}_{U_F^+}'\cap \LL^{\infty}(U_F^+) \subseteq \mscr{C}_{U_F^+}$ then $\mscr{C}_{U_F^+}'\cap \LL^{\infty}(U_F^+) = \mscr{C}_{U_F^+} \cap \mscr{C}_{U_F^+}'\cap \LL^{\infty}(U_F^+) = \CC \mathds{1}$, i.e. the inclusion is irreducible. By Remark \ref{rem:splitfactor} a quasi-split inclusion of factors is actually split and it is easy to check that a proper split inclusion cannot be irreducible.
\end{proof}
\subsection*{Acknowledgements}
MW was supported by the Research Foundation –- Flanders (FWO) through a Postdoctoral Fellowship, by long term structural funding -- Methusalem grant of the Flemish Government -- and by European Research Council Starting Grant 677120 INDEX. JK was partially supported by the NCN (National Centre of Science) grant 2014/14/E/ST1/00525. Furthermore, JK would like to express his gratitude towards Adam Skalski and Piotr M.~Sołtan for many helpful discussions. The authors would like to thank the anonymous referees for several useful remarks.

\end{document}